\newtheorem{thm}{Theorem}[section]
\newtheorem*{thm*}{Theorem}
\newtheorem*{thm:offdiagupper}{\cref{thm:offdiagupper}'}
\newtheorem*{thm:quasirandom}{\cref{thm:quasirandom}'}
\newtheorem*{thm:goodness-tight}{\cref{thm:goodness-tight}}
\newtheorem*{thm:goodness-stability}{\cref{thm:goodness-stability}'}
\newtheorem{lem}[thm]{Lemma}
\newtheorem{conj}[thm]{Conjecture}
\theoremstyle{definition}
\newtheorem{Def}[thm]{Definition}
\newtheorem*{rem}{Remark}
\crefname{lem}{Lemma}{Lemmas}
\newlist{lemenum}{enumerate}{1}
\setlist[lemenum]{label=(\alph*), ref=\thelem(\alph*)}
\DeclareMathOperator\pr{Pr}
\DeclareMathOperator\ext{ext}
\newcommand\up[1]{^{(#1)}}
\newcommand\inv{^{-1}}
\newcommand\flo[1]{\lfloor #1 \rfloor}
\newcommand\ab[1]{\lvert #1\rvert}
\newcommand\Q{\mathcal{Q}}
\newcommand\N{\mathbb{N}}
\newcommand\R{\mathbb{R}}
\newcommand\E{\mathbb{E}}
\newcommand\h{\mathcal{H}}
\newcommand{\pardiff}[2]{\mathchoice{\frac{\partial #1}{\partial #2}}{\partial #1/\partial #2}{\partial #1/\partial #2}{\partial #1/\partial #2}}
\title{Off-diagonal book Ramsey numbers}
\author{David Conlon\thanks{Department of Mathematics, California Institute of Technology, Pasadena, CA 91125, USA. Email: {\tt dconlon@caltech.edu}. Research supported by NSF Award DMS-2054452.} \and Jacob Fox\thanks{Department of Mathematics, Stanford University, Stanford, CA 94305, USA. Email: {\tt jacobfox@stanford.edu}. Research supported by a Packard Fellowship and by NSF Awards DMS-1800053 and DMS-2154169.} \and Yuval Wigderson\thanks{School of Mathematics, Tel Aviv University, Tel Aviv 69978, Israel. Email: {\tt yuvalwig@tauex.tau.ac.il}. Research supported by NSF GRFP Grant DGE-1656518, NSF-BSF Grant 20196, and by ERC Consolidator Grants 863438 and 101044123.}}
\date{}
\begin{document}
\maketitle

\begin{abstract}
    The book graph $B_n \up k$ consists of $n$ copies of $K_{k+1}$ joined along a common $K_k$. In the prequel to this paper, we studied the diagonal Ramsey number $r(B_n \up k, B_n \up k)$. Here we consider the natural off-diagonal variant $r(B_{cn} \up k, B_n\up k)$ for fixed $c \in (0,1]$. In this more general setting, we show that an interesting dichotomy emerges: for very small $c$, a simple $k$-partite construction dictates the Ramsey function and all nearly-extremal colorings are close to being $k$-partite, while, for $c$  bounded away from $0$, random colorings of an appropriate density are asymptotically optimal and all nearly-extremal colorings are quasirandom. Our investigations also open up a range of questions about what happens for intermediate values of $c$.
\end{abstract}

\section{Introduction}

Given two graphs $H_1$ and $H_2$, their \emph{Ramsey number} $r(H_1,H_2)$ is the smallest positive integer $N$ such that every red/blue coloring of the edges of $K_N$ is guaranteed to contain a red copy of $H_1$ or a blue copy of $H_2$. One of the main open problems in Ramsey theory is to determine the asymptotic order of $r(K_n,K_n)$. However, despite intense and longstanding interest, the lower and upper bounds $\sqrt 2^n \leq r(K_n,K_n) \leq 4^n$ for this problem have remained largely unchanged since 1947 and 1935, respectively \cite{Erdos47, ErSz}.

Another major question in graph Ramsey theory, which has seen more progress, is to determine the growth rate of the  \emph{off-diagonal} Ramsey number $r(K_s,K_n)$, where we think of $s$ as fixed and let $n$ tend to infinity. The first non-trivial case is when $s=3$, where it is known that
\[
    r(K_3,K_n) = \Theta \left( \frac{n^2}{\log n}\right),
\]
with the upper bound due to Ajtai, Koml\'os, and Szemer\'edi \cite{AjKoSz} and the lower bound to Kim~\cite{Kim}. Subsequent work of Shearer~\cite{Shearer}, Bohman--Keevash~\cite{BoKe}, and Fiz Pontiveros--Griffiths--Morris \cite{FiGrMo} has led to a better understanding of the implicit constant, which is now known up to a factor of $4+o(1)$. However, the successes in estimating $r(K_3, K_n)$ have not carried over to $r(K_s, K_n)$ for any other fixed $s$ and a polynomial gap persists between the upper and lower bounds for all $s \geq 4$ (though see~\cite{MuVe} for a promising approach to improving the lower bound).

The \emph{book graph} $B_n \up k$ is the graph obtained by gluing $n$ copies of the clique $K_{k+1}$ along a common $K_k$. The ``book'' terminology comes from the case $k=2$, where $B_n \up 2$ consists of $n$ triangles glued along a common edge. Continuing the analogy, each $K_{k+1}$ is called a \emph{page} of the book and the common $K_k$ is called the \emph{spine}. Ramsey numbers of books arise naturally in the study of $r(K_n, K_n)$; indeed, 
Ramsey's original proof~\cite{Ramsey} of the finiteness of $r(K_n,K_n)$ proceeds inductively by establishing the finiteness of certain book Ramsey numbers, while the Erd\H os--Szekeres bound \cite{ErSz} and its improvements~\cite{Conlon2009, Sah2021} are also best interpreted through the language of books.  
Because of this, Ramsey numbers of books have attracted a great deal of attention over the years, starting with papers of Erd\H os, Faudree, Rousseau, and Schelp \cite{ErFaRoSc} and of Thomason \cite{Thomason82}. Both of these papers prove bounds of the form $2^k n-o_k(n) \leq r(B_n\up k, B_n \up k) \leq 4^k n$, where we think of $k$ as fixed and $n \to \infty$, with Thomason conjecturing that the lower bound is closer to the truth. This was confirmed in a recent breakthrough result of the first author~\cite{Conlon}, who proved that, for every fixed $k$,
\[
    r(B_n \up k,B_n \up k)=2^k n+o_k(n).
\]
The original proof of this result relied heavily on an application of Szemer\'edi's celebrated regularity lemma, leading to rather poor control on the error term. In the prequel to this paper~\cite{CoFoWi}, we gave two alternative proofs of this result, one a simplified version of the first author's original proof and the other a proof which avoids the use of the full regularity lemma, 
allowing us to gain significantly better control over the error term (for a discussion of how further improvements might ultimately impinge on the estimation of $r(K_n, K_n)$, we refer the reader to~\cite{CoFoWi}). We also proved a stability result, saying that extremal colorings for this Ramsey problem are quasirandom. 

In this paper, we study a natural off-diagonal generalization of this problem. Specifically, we fix some $k \in \N$ and some $c \in (0,1]$ and we wish to understand the asymptotics of the Ramsey number $r(B_{\flo{cn}} \up k, B_{n} \up k)$ as $n \to \infty$. Note that for $c=1$ this is precisely the question considered above. Henceforth, we omit the floor signs and write $B_{cn}\up k$ instead of $B_{\flo{cn}}\up k$.

Our results reveal that the behavior of the function $r(B_{cn} \up k,B_n \up k)$ varies greatly as $c$ moves from $0$ to $1$. As we shall see, for $c$ sufficiently small, the behavior of this Ramsey number is determined by a simple block construction, while, for $c$ sufficiently far from $0$, its behavior is determined by a random coloring. There is also an intermediate range of $c$ where our results say nothing, but where several interesting questions arise. We will say more about this in the concluding remarks. 

To describe our results in detail, we begin by observing that for any positive integers $k$, $m$, and $n$ with $m \leq n$, we have 
\begin{equation}\label{eq:goodness-bound}
	r(B_m \up k, B_n \up k) \geq k(n+k-1)+1.
\end{equation}
Indeed,	let $N=k(n+k-1)$. We partition the vertices of $K_N$ into $k$ blocks, each of size $n+k-1$. We color all edges inside a block blue and all edges between blocks red. Then any blue $B_n \up k$ must appear inside a block, which it cannot, since $B_n \up k$ has $n+k$ vertices. On the other hand, since the red graph is $k$-partite, it does not contain any red $K_{k+1}$ and so cannot contain a red $B_m \up k$.

This simple inequality is a special case of a more general lower bound, usually attributed to Chv\'atal and Harary \cite{MR314696}, that $r(H_1,H_2) \geq (\chi(H_1)-1)(|V(H_2)|-1)+1$ provided $H_2$ is connected. In general, this lower bound is far from optimal,\footnote{For example, for $H_1=H_2=K_n$, it gives a lower bound of $r(K_n, K_n) =\Omega(n^2)$, whereas the truth is $2^{\Theta(n)}$.} but it is tight for certain sparse graphs. The study of when it is tight goes under the name of \emph{Ramsey goodness}, a term introduced by Burr and Erd\H os \cite{BuEr} in their first systematic investigation of the concept. One of the central results in the field of Ramsey goodness is due to Nikiforov and Rousseau \cite{NiRo}, who proved an extremely general theorem about when this lower bound is tight. As a very special case of their theorem, one has the following result; see also \cite{FoHeWi} for a new proof with better quantitative bounds. 

\begin{thm}[{Nikiforov--Rousseau \cite[Theorem 2.12]{NiRo09}}]\label{thm:goodness-tight}
    For every $k \geq 2$, there exists some $c_0 \in (0,1)$ such that, for any $0<c \leq c_0$ and $n$ sufficiently large,
    \[
        r(B_{cn}\up k, B_n \up k) = k(n+k-1)+1.
    \]
\end{thm}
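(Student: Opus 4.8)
The lower bound is the block construction already given, so the plan is to prove the matching upper bound $r(B_{cn}\up k, B_n\up k) \le k(n+k-1)+1$ for all $c$ below some $c_0 = c_0(k)$ and $n$ large. I would fix a red/blue colouring of $K_N$ with $N = k(n+k-1)+1$, set $a = n+k-1$, and suppose for contradiction that there is no blue $B_n\up k$ and no red $B_{cn}\up k$. Two observations are immediate: a blue $K_{n+k}$ contains a blue $K_k$ with $n$ common blue neighbours, so the blue graph has clique number at most $a$, i.e.\ the red graph $G$ has $\alpha(G) \le a$; and a red $K_{cn+k}$ contains a red $B_{cn}\up k$, so $\omega(G) \le cn+k-1$. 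Since $N = ka+1$, the vertex set cannot be covered by $k$ blue cliques.

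The basic tool I would use is a \emph{gluing lemma}: if $C$ is a blue clique with $|C| \ge a$ and some $v \notin C$ has at least $k$ blue neighbours in $C$, then a blue $K_k$ among those neighbours has common blue neighbourhood containing $(C\setminus K_k)\cup\{v\}$, a set of size $\ge |C|-k+1 \ge n$, giving a blue $B_n\up k$. So under our assumption every blue clique $C$ of size $\ge a$ has the property that each vertex outside $C$ has at most $k-1$ blue, hence at least $|C|-(k-1) \ge n$ red, neighbours in $C$.

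Next I would repeatedly peel off a maximum blue clique: $C_1$ a maximum blue clique of $K_N$, $C_2$ one of $K_N - C_1$, and so on, with $|C_1| \ge |C_2| \ge \cdots$, each $\le a$. The heart of the argument is the \emph{balanced case} where $|C_1| = \cdots = |C_k| = a$; then a single vertex $v$ remains, and the gluing lemma (applied inside the graph from which each $C_i$ was taken) shows that $v$ — and likewise every vertex of each $C_j$ — has at most $k-1$ blue neighbours in each $C_i$. So the colouring on $C_1\cup\cdots\cup C_k$ is a bounded-degree perturbation of the block construction. I would then produce a red $B_{cn}\up k$ with $v$ as a spine vertex: greedily choose $u_j \in C_j$ for $j=1,\dots,k-1$ with $\{v,u_1,\dots,u_j\}$ red — possible since $v$ has $\ge n$ red neighbours in $C_j$ and each earlier $u_i$ has $\le k-1$ blue neighbours there, and $n-(k-1)^2>0$ — so that $\{v,u_1,\dots,u_{k-1}\}$ is a red $K_k$. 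A vertex $w\in C_k$ is then a common red neighbour whenever it is red to $v$ and to all $u_i$, and the number of such $w$ is at least $|N_{\mathrm{red}}(v)\cap C_k| - \sum_{i<k}|N_{\mathrm{blue}}(u_i)\cap C_k| \ge n-(k-1)^2 > cn$ for $n$ large. That red $B_{cn}\up k$ is the desired contradiction.

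What remains — and what I expect to be the bulk of the work — is the unbalanced cases, where fewer than $k$ of the peeled cliques attain size $a$, or all blue cliques are strictly smaller than $a$. There the blue graph is sparser and $G$ behaves like a complete multipartite graph with more than $k$ parts (the peeled blue cliques, pairwise almost completely red-joined when large), and I would again build a red $B_{cn}\up k$ from a transversal of $k-1$ of the large blue cliques together with a vertex from one of the extra parts, whose common red neighbourhood contains an entire remaining part up to bounded error. Making this uniform requires refinements of the gluing lemma for cliques just below size $a$ (using pairs of outside vertices to recover the missing page vertex), an averaging argument to pick transversal vertices with small blue degree into the relevant parts, and a careful accounting; it is here that one needs $c$ small in terms of $k$, and here that the hypothesis $k\ge 2$ is essential — for $k=1$ the blue graph can simply be a balanced complete bipartite graph and the claimed identity fails.
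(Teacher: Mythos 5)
The paper does not actually prove this theorem: it quotes it as a known result of Nikiforov and Rousseau (\cite[Theorem 2.12]{NiRo09}), and what Section~3 proves from scratch is only the stability version, \cref{thm:goodness-stability}, via Szemer\'edi's regularity lemma, the Andr\'asfai--Erd\H os--S\'os theorem, and a K\H ov\'ari--S\'os--Tur\'an-type counting lemma --- a quite different toolkit from your elementary clique-peeling approach. So there is no in-paper proof to compare against, and I assess your sketch on its own.

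Your gluing lemma is sound, and the balanced case is carried out correctly: the greedy construction of a red spine $\{v,u_1,\dots,u_{k-1}\}$ does produce at least $n-(k-1)^2>cn$ common red neighbours in $C_k$. But note that this argument succeeds for \emph{every} $c<1$, which should be a warning sign: the conclusion $r(B_{cn}\up k,B_n\up k)=k(n+k-1)+1$ fails once $c$ exceeds roughly $(\log k/k)^k$, where the random lower bound $(c^{1/k}+1)^k n$ takes over, so the dependence $c\le c_0(k)$ must enter entirely through the cases you defer --- and that is exactly where the content of the theorem lives. When the blue clique number is well below $a=n+k-1$ (for instance in a $p$-quasirandom coloring, where blue cliques have size $O(\log N)$), your gluing lemma, which requires cliques of size at least $a$, gives nothing, and the peeled cliques carry essentially no structural information. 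This is not a mild ``unbalanced'' perturbation of the balanced case but a genuinely different regime, and ``refinements of the gluing lemma'' plus ``careful accounting'' will not bridge it without a new idea --- a degree-dichotomy or regularity-type argument of the kind used in \cite{NiRo09} and \cite{FoHeWi}. As it stands, your proposal establishes the result only for a thin slice of colorings and is not yet a proof of the theorem.
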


Moreover, Nikiforov and Rousseau's proof shows that the unique coloring on $k(n+k-1)$ vertices with no red $B_{cn}\up k$ and no blue $B_n \up k$ is the coloring we described, where the red graph is a balanced complete $k$-partite graph (meaning that all the parts have orders as equal as possible). By adapting their proof, we are able to prove a corresponding structural stability result, which says that any coloring on $N=(k+o(1))n$ vertices is either ``close'' to being balanced  complete $k$-partite in red or contains monochromatic books with substantially more pages than what is guaranteed by \cref{thm:goodness-tight}. Note that if $N$ is sufficiently large and  congruent to $1$ modulo $k$, then \cref{thm:goodness-tight} says that any red/blue coloring of $E(K_N)$ contains a red $K_k$ with at least $\frac ck(N-1)-c(k-1)$ extensions to a red $K_{k+1}$ or a blue $K_k$ with at least $\frac 1k(N-1)-(k-1)$ extensions to a blue $K_{k+1}$. 

\begin{thm}\label{thm:goodness-stability}
    For every $k \geq 2$ and every $\theta>0$, there exist $c,\gamma \in (0,1)$ such that the following holds for any sufficiently large $N$ and any red/blue coloring of $E(K_N)$. Either one can recolor at most $\theta N^2$ edges to turn the red graph into a balanced complete $k$-partite graph or else the coloring contains one of the following:
    \begin{itemize}
        \item at least $\gamma N^k$ red $K_k$, each with at least $(\frac ck+\gamma)N$ extensions to a red $K_{k+1}$, or
        \item at least $\gamma N^k$ blue $K_k$, each with at least $(\frac 1k+\gamma)N$ extensions to a blue $K_{k+1}$.
    \end{itemize}
\end{thm}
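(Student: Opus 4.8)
The plan is to run a stability-version of the Nikiforov--Rousseau argument, extracting structural information from the failure of both bulleted conclusions. Fix $\theta>0$ and suppose neither conclusion holds; I want to show the red graph is within $\theta N^2$ edges of being balanced complete $k$-partite. The starting point is that the failure of the two conclusions is a strong ``$(k+1)$-clique-extension'' constraint: for \emph{all but at most} $\gamma N^k$ red $K_k$'s, the red codegree of their vertex set is at most $(\frac ck+\gamma)N$, and similarly all but $\gamma N^k$ blue $K_k$'s have blue codegree at most $(\frac 1k+\gamma)N$. First I would pass to a regularized picture: apply Szemer\'edi's regularity lemma (or a weak regularity lemma, since we only need to count $K_{k+1}$'s, which one can afford for fixed $k$) to the red/blue coloring, obtaining a reduced weighted graph on a bounded number of parts. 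The codegree bounds above, combined with a counting/supersaturation lemma, translate into statements about the reduced graph: no ``dense spot'' in red can support many red cliques $K_k$ with large codegree, and likewise for blue. Concretely, if the red reduced graph contained a $K_{k+1}$ with all densities bounded away from $0$, then by a standard counting argument there would be $\Omega(N^k)$ red $K_k$'s each with $\Omega(N)$ red extensions, and one checks the relevant constant exceeds $\frac ck+\gamma$ once $c$ is small enough — contradiction. Hence the red reduced graph is essentially $K_{k+1}$-free in the ``robust'' sense, i.e. every $K_{k+1}$ uses an edge of negligible density.

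The heart of the argument is then a stability statement for colorings whose red part is robustly $K_{k+1}$-free and whose blue part has bounded blue codegree over blue $K_k$'s. Here is where the block construction must be forced out. Since the red graph is (robustly) $K_{k+1}$-free, by a stability version of Tur\'an's theorem (Erd\H os--Simonovits) it is close in edit distance to a complete $k$-partite graph $G$ on parts $V_1,\dots,V_k$ — but a priori $G$ need not be \emph{balanced}. To force balance, I would use the blue-codegree constraint: within each part $V_i$, essentially all edges are blue (up to the $o(N^2)$ error), so $V_i$ spans a near-complete blue graph, and any blue $K_k$ inside $V_i$ has roughly $|V_i|$ blue extensions inside $V_i$ plus possibly more from other parts. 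If some $|V_i| \geq (\frac 1k+\gamma')N$, then picking a blue $K_k$ inside $V_i$ (which exists, robustly, since $V_i$ is near-complete in blue and $|V_i| \gg k$) gives $\Omega(N^k)$ blue $K_k$'s each with more than $(\frac 1k + \gamma)N$ blue extensions, contradicting the failure of the second bullet. Hence every $|V_i| \leq (\frac 1k + o(1))N$, and since $\sum |V_i| = N$ this forces every $|V_i| = (\frac1k + o(1))N$, i.e. $G$ is nearly balanced. A short cleanup argument — moving the $o(N^2)$ exceptional vertices and re-balancing — then shows the red graph can be made exactly balanced complete $k$-partite by recoloring at most $\theta N^2$ edges, provided $\gamma$ and the regularity parameters were chosen small enough relative to $\theta$, and $c$ small enough that the counting constants in the first step go through.

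I would organize the write-up as: (i) a counting lemma stating that a red $K_{k+1}$ with density-$\delta$ reduced edges yields $\Omega_\delta(N^k)$ red $K_k$'s with $\Omega_\delta(N)$ extensions, plus its blue analogue; (ii) deduce from the failure of the two conclusions that red is robustly $K_{k+1}$-free; (iii) apply Erd\H os--Simonovits stability to get closeness to complete $k$-partite; (iv) use the blue-codegree bound to upgrade ``$k$-partite'' to ``balanced $k$-partite''; (v) a cleanup step to get the exact structure within $\theta N^2$ recolorings.

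\textbf{Main obstacle.} The step I expect to be most delicate is (iv): controlling the \emph{blue} extensions of a blue $K_k$ carefully enough to force each part to have size at most $(\frac1k+\gamma)N$ rather than merely $(\frac1k+o(1))N$ with an error that might swamp $\gamma$. One must be careful that a blue $K_k$ sitting inside $V_i$ genuinely has close to $|V_i|$ blue extensions — this requires knowing the blue graph inside $V_i$ is not just dense but robustly so (every vertex of $V_i$ has high blue degree within $V_i$), which in turn needs a cleaning step removing low-blue-degree vertices before invoking the count. Managing the interplay of the several small parameters ($\theta$, $\gamma$, $c$, the regularity $\varepsilon$, and the cleaning thresholds) so that all the inequalities point the same way is the real work; conceptually, though, the argument is a fairly standard regularity-plus-stability deduction once the right counting lemma is in place.
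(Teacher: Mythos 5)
Your proposal follows the broad contours of a regularity-plus-stability argument, and several of the individual ideas (using a counting lemma to turn a dense $K_{k+1}$ in the reduced picture into many red $K_k$'s with large red codegree; using blue codegree inside a large part to force balance) are genuinely the right ideas and do appear in the paper. But the key structural step, (iii), has a real gap: Erd\H os--Simonovits stability concludes closeness to the Tur\'an graph from $K_{k+1}$-freeness \emph{plus a near-extremal edge count}, and nothing in your plan establishes that the red graph has $(1-\frac1k-o(1))\binom N2$ edges. Robust $K_{k+1}$-freeness alone is compatible with a sparse red graph, and ruling that out from the codegree hypotheses is not immediate: a dense blue graph need not supply blue $K_k$'s with large blue codegree (consider, say, a blue complete $(k-1)$-partite graph), so the failure of the second bulleted conclusion does not directly bound the red edge count from below. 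You would have to handle such degenerate structures separately, which is essentially re-doing the work the stability theorem was supposed to save. You have also slightly misremembered Erd\H os--Simonovits: when it applies, it gives closeness to the \emph{balanced} complete $k$-partite graph, so your step (iv) would be redundant, whereas without the edge-count hypothesis it gives nothing.

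The paper sidesteps this by using the Andr\'asfai--Erd\H os--S\'os theorem (\cref{thm:andrasfai-erdos-sos}) in place of Erd\H os--Simonovits. AES is a \emph{minimum-degree} stability statement: $K_{k+1}$-free plus minimum degree $>(1-\frac1k-\rho)m$ forces exact $k$-partiteness. This is exactly the form of information the codegree hypothesis naturally yields in the reduced graph: if some reduced vertex $v_1$ had low degree, then $V_1$ would be blue-dense to at least $(\frac1k+\sigma)m$ parts, and a random blue $K_k$ inside $V_1$ would already give $(\frac1k+\gamma)N$ blue extensions, contradicting the second bullet. So the minimum-degree hypothesis of AES is verified directly, without ever needing to control the global red edge count, and moreover the same degree bound immediately forces the independent sets $A_\ell$ to have size $(\frac1k\pm k\sigma)m$, giving balance essentially for free. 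Finally, the paper still needs a separate argument to show the blue density \emph{between} different lifted parts $X_\ell$ is small (to go from ``$k$-partite'' to ``complete $k$-partite''), which it does with the K\H ov\'ari--S\'os--Tur\'an double-counting lemma (\cref{lem:zarankiewicz-hypergraph}) plus the blue codegree bound; your plan doesn't address this step at all, though it would be subsumed by ES stability were ES applicable. In short: replace ES by AES, driven by a minimum-degree argument in the reduced graph using blue codegree, and the proof closes; with ES as written, the edge-count hypothesis is the missing ingredient.
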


Informally, this theorem says that either the coloring is close to complete $k$-partite in red or else a constant fraction of the $k$-tuples induce a clique that forms the spine of a monochromatic book with at least $\gamma N$ more pages than what is guaranteed by the Ramsey bound alone.

However, once $c$ is sufficiently far from $0$, the deterministic construction that yields (\ref{eq:goodness-bound}) stops being optimal. Indeed, as in the diagonal case, we can get another lower bound on $r(B_{cn}\up k,B_n \up k)$ by considering random colorings. More precisely, let us fix $k \in \N$ and $c \in (0,1]$ and define
\[
    p=\frac{1}{c^{1/k}+1} \in \left[ \tfrac 12,1\right).
\]
We set $N=(p^{-k}-o(1))n$ and independently color every edge of $K_N$ blue with probability $p$ and red with probability $1-p$. Given a blue $K_k$ in this coloring, the expected number of extensions to a blue $K_{k+1}$ is $p^k(N-k) = n-o(n)$. Similarly, the expected number of extensions of a red $K_k$ to a red $K_{k+1}$ is $(1-p)^k(N-k) = ((1-p)/p)^k n - o(n)=cn-o(n)$, by our choice of $p$. A standard application of the Chernoff bound and the union bound then implies that
w.h.p.\footnote{As usual, we say that an event $E$ happens with high probability (w.h.p.)\ if $\pr(E) \to 1$ as $n \to \infty$, where the implicit parameter $n$ will be clear from context.}\ this coloring contains no blue $B_n \up k$ and no red $B_{cn}\up k$, assuming the $o(n)$ terms are chosen appropriately.
This implies that for any $k \in \N$ and any $c \in (0,1]$,
\[
    r(B_{cn}\up k, B_n \up k) \geq \left(c^{1/k}+1\right)^k n -o_k(n),
\]
while the lower bound in (\ref{eq:goodness-bound}) is that $r(B_{cn} \up k,B_n \up k) \geq (k+o(1))n$. If $c>((1+o(1))\frac{\log k}{k})^k$, then the quantity $(c^{1/k}+1)^k$ is larger than $k+o(1)$, where the logarithm is to base $e$. Thus, once $c$ is sufficiently far from $0$, the bound in (\ref{eq:goodness-bound}) is smaller than the random bound.

Our next main result shows that the random bound actually becomes asymptotically tight 
at this point.

\begin{thm}\label{thm:offdiagupper}
	For every $k \geq 2$, there exists some $c_1=c_1(k) \in (0,1]$ such that, for any fixed $c_1 \leq c \leq 1$,
	\[
		r(B_{cn}\up k,B_n \up k) = \left( c^{1/k}+1 \right) ^kn+o_k(n).
	\]
	Moreover, one may take $c_1(k)=((1+o(1))\frac{\log k}{k})^k$.
\end{thm}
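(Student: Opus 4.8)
The plan is to establish the matching upper bound $r(B_{cn}\up k, B_n\up k) \le (c^{1/k}+1)^k n + o_k(n)$, since the lower bound is already proved by the random construction in the excerpt. Set $p = 1/(c^{1/k}+1)$, so that the target is $N = (p^{-k}+o(1))n$, and suppose we are given a red/blue coloring of $E(K_N)$ with no red $B_{cn}\up k$ and no blue $B_n\up k$. I would run the same strategy as in the diagonal case (the prequel \cite{CoFoWi}), adapted to unequal book sizes. The key mechanism is a density-increment / weighted-counting argument on $k$-cliques: for a color, say blue, of density $q$ in a suitable pseudorandom-ish piece of the graph, a typical blue $K_k$ extends to roughly $q^k N$ blue $K_{k+1}$'s, i.e.\ forms a blue book with about $q^k N$ pages; to avoid a blue $B_n\up k$ we need $q^k N \lesssim n$, and likewise to avoid a red $B_{cn}\up k$ we need $(1-q)^k N \lesssim cn$. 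Adding the $k$-th roots of these two inequalities gives $N^{1/k}(q + (1-q)) = N^{1/k} \lesssim n^{1/k} + (cn)^{1/k} = (1 + c^{1/k}) n^{1/k}$, i.e.\ $N \lesssim (1+c^{1/k})^k n = p^{-k} n$, which is exactly the bound we want. So the whole proof is about making this heuristic rigorous and uniform.

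The first real step is to reduce to a situation where such clique-counting estimates are valid. Following \cite{CoFoWi}, I would avoid the full regularity lemma and instead pass to a large vertex subset on which both color classes behave pseudorandomly at the level of $K_k$-counts — e.g.\ via a weak/cylinder regularity argument, a dependent random choice, or the specific ``book counting lemma'' developed in the prequel, which controls the number of extensions of a $K_k$ to a $K_{k+1}$ in terms of local densities. The second step is the optimization: on the cleaned-up piece, let $q$ be the blue density; one shows that \emph{either} there are $\Omega(N^k)$ blue $K_k$'s with $\ge (1-\epsilon)q^k N$ blue extensions \emph{or} the coloring has some large-scale deficiency that we can exploit (too few $K_k$'s of one color, or a density imbalance) — and symmetrically for red with density $1-q$. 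Combining the two book-size constraints $q^k N \le (1+\epsilon)n$ and $(1-q)^k N \le (1+\epsilon)cn$ and taking $k$-th roots as above yields $N \le (1+\epsilon')p^{-k}n$ for $N$ large; choosing $c_1(k)$ so that $p \ge 1/2$ stays in the range where the argument's error terms are controlled gives the stated threshold $c_1(k) = ((1+o(1))\log k/k)^k$ (this is precisely the value of $c$ at which the random bound overtakes the $k$-partite bound, so below it a different extremal structure takes over and the argument genuinely needs $c \ge c_1$).

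The technical heart — and the main obstacle — is the passage from "average number of pages over all blue $K_k$'s is about $q^k N$" to "there is a single blue $K_k$ with close to $q^k N$ pages," i.e.\ controlling the variance/concentration of the extension count across $k$-cliques. A naive averaging only gives one book of average size, but the pages of different books overlap heavily, so one cannot simply union over disjoint spines; this is exactly where the quasirandomness of near-extremal colorings enters and where the prequel's machinery does the work. Concretely, I expect to need a second-moment or convexity estimate showing that the number of pairs $(K_k\text{-clique}, K_{k+1}\text{-clique})$ forces many high-degree spines unless the coloring is far from quasirandom, in which case a separate (easier) argument applies. I would handle this by importing the relevant counting lemma from \cite{CoFoWi} essentially verbatim, checking only that the asymmetry $c < 1$ does not break any of its hypotheses, and then carrying the asymmetric optimization through to the end. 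The rest — cleaning up floors, absorbing lower-order terms into $o_k(n)$, and verifying the explicit form of $c_1(k)$ — is routine bookkeeping.
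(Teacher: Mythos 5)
Your high-level outline is in the same spirit as the paper: pass to a pseudorandom piece, count $K_k \to K_{k+1}$ extensions via a counting lemma, and combine blue and red extension counts to force $N \lesssim p^{-k} n$. But there is a genuine gap at the heart of the argument, and a significant misdiagnosis of where the threshold $c_1(k)$ comes from.

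The key off-diagonal ingredient that you underplay is the analytic inequality (\cref{lem:offdiagxi}): for $p\in(0,1)$ and $k\ge k_1(p)$, all $x_1,\ldots,x_k\in[0,1]$ satisfy
\[
p^{1-k}\prod_{i=1}^k x_i + \frac{(1-p)^{1-k}}{k}\sum_{i=1}^k(1-x_i)^k \ge 1.
\]
Here the $x_i(v)=d_B(v,C_i)$ are per-vertex, per-part densities over a blocked configuration $C_1,\ldots,C_k$, not a single overall density $q$. Summing over $v$ gives a $(p,1-p)$-weighted average of two clique-extension counts bounded below by $N$, so one of the two color counts must exceed $N$, and this is what produces either a blue $B_n^{(k)}$ or a red $B_{cn}^{(k)}$. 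Your scalar heuristic ($q^k N\le n$ and $(1-q)^k N\le cn$) correctly predicts the answer but does not rigorize to a proof; the rigorous replacement is this asymmetric inequality, which is \emph{not} the diagonal lemma with ``hypotheses rechecked.'' The diagonal version corresponds to $p=1/2$ and holds unconditionally for $k\ge 3$; the off-diagonal version genuinely fails for small $k$ when $p$ is close to $1$, and the paper spends a full calculus argument (case analysis on whether some $x_j\le 1/k$, multiplicative Jensen, and an optimization over $w=z^{1/k}$) to prove it and to determine the precise threshold $k_1(p)$.

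This is also where your explanation of $c_1(k)$ goes wrong. You say $c_1$ is chosen so that ``$p\ge 1/2$ stays in the range where the error terms are controlled,'' but $p=1/(c^{1/k}+1)\ge 1/2$ holds automatically for all $c\le 1$ and imposes no constraint. The real constraint is $k\ge k_1(p(c,k))$, i.e.\ the analytic inequality must actually hold; unwinding $k_1(p)=1+\frac{5-\log\lambda+\log\log\frac{1}{\lambda}}{\lambda}$ with $\lambda=\log\frac{1}{1-p}$ yields $c_1(k)=((1+o(1))\log k/k)^k$. That this threshold coincides with the point at which the random bound overtakes the $k$-partite bound is a pleasant fact the paper remarks on, but it is not the mechanism; it is an output of the inequality, not an input. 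Finally, a smaller point: you suggest avoiding the full regularity lemma, but the paper's proof of this theorem does use it (via \cref{reglem}), building a reduced graph and applying Tur\'an's theorem to extract a blocked configuration --- the regularity-free machinery from the prequel is not the route taken here.
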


Our third main result is a corresponding structural stability theorem, which says that all  near-extremal Ramsey colorings (i.e., colorings on roughly $(c^{1/k}+1)^kn$ vertices) must either contain a monochromatic book substantially larger than what is guaranteed by \cref{thm:offdiagupper} or be ``random-like''. The latter possibility is captured by the notion of \emph{quasirandomness}, introduced by Chung, Graham, and Wilson \cite{ChGrWi}. For parameters $p,\theta \in (0,1)$, a red/blue coloring of $E(K_N)$ is said to be $(p,\theta)$-quasirandom if, for every pair of disjoint sets $X,Y \subseteq V(K_N)$, we have that
\[
    \left|e_B(X,Y)-p|X||Y|\right| \leq \theta N^2,
\]
where $e_B(X,Y)$ denotes the number of blue edges between $X$ and $Y$. Note that since the colors are complementary, this is equivalent to the analogous condition requiring that $e_R(X,Y)$ is within $\theta N^2$ of $(1-p)|X||Y|$. In their seminal paper, Chung, Graham, and Wilson, building on previous results of Thomason \cite{Thomason82}, showed that this condition is essentially equivalent to a large number of other conditions, all of which encapsulate some intuitive idea of what it means for a coloring to be similar to a random coloring with blue density $p$. With this notion in hand, we can state our structural stability result.

\begin{thm}\label{thm:quasirandom}
	For every $p \in [\frac 12,1)$, there exists some $k_0 \in \N$ such that the following holds for every $k \geq k_0$. For every $\theta>0$, there exists some $\gamma>0$ such that if a red/blue coloring
of $E(K_N)$ is not $(p,\theta)$-quasirandom, then it contains one of the following:
	\begin{itemize}
		\item at least $\gamma N^k$ red $K_k$, each with at least $((1-p)^k+\gamma)N$ extensions to a red $K_{k+1}$, or
		\item at least $\gamma N^k$ blue $K_k$, each with at least $(p^k+\gamma)N$ extensions to a blue $K_{k+1}$.
	\end{itemize}
\end{thm}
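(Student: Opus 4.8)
The plan is to prove the contrapositive: assume the coloring contains few red $K_k$ with many red extensions and few blue $K_k$ with many blue extensions, and deduce $(p,\theta)$-quasirandomness. I would use the counting formulation of quasirandomness from the Chung--Graham--Wilson equivalences: it suffices to show that the number of blue copies of $K_k$ with an additional common blue neighbor (equivalently, the blue ``$K_{k+1}$-count'') and the red such count are both close to their expected values $p^{\binom{k+1}{2}}\binom{N}{k+1}$ and $(1-p)^{\binom{k+1}{2}}\binom{N}{k+1}$ respectively; more robustly, I would aim to control the full degree sequence and neighborhood-intersection sizes. The key starting point is a double-counting identity: $\sum_{S}\codeg_B(S)$ over blue $k$-cliques $S$ counts blue $(k+1)$-cliques, and the number of blue $k$-cliques itself is controlled by lower-order moments. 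Under the failure of both bullets, every blue $K_k$ has at most $(p^k+\gamma)N$ blue extensions and every red $K_k$ has at most $((1-p)^k+\gamma)N$ red extensions, save for a $\gamma$-fraction. So the blue $K_{k+1}$ count is at most $(p^k+\gamma)N \cdot (\#\text{blue }K_k) + \gamma N \cdot N^k/k!$, and symmetrically for red.

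The heart of the argument is then a convexity / entropy-type inequality. I would set up a weighting or density argument: letting $b(S)=\codeg_B(S)/N$ for each $k$-set $S$ (interpreting $\codeg_B$ as the number of common blue neighbors when $S$ itself spans a blue clique, and $0$ otherwise), the blue $K_{k+1}$-density is roughly $\E_S[b(S)]$ while the blue $K_k$-density is roughly $\E_S[\text{something}]$; iterating this down from $k$ to $2$ to $1$ relates everything to the edge density $p_B$ of the coloring. The point is that among all colorings with a given blue edge density, the one minimizing the blue $K_{k+1}$-count plus red $K_{k+1}$-count (in the regime where we want both small) is, by a Kruskal--Katona or tensor-power / Cauchy--Schwarz argument, the quasirandom one — and a coloring that is far from quasirandom must have one of the two counts bounded away from the minimum, forcing many cliques with large codegree of one color or the other. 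This is exactly where the hypothesis $k \geq k_0(p)$ enters: the separation between $p^k$ (resp.\ $(1-p)^k$) and the typical codegree of a clique in a genuinely non-quasirandom coloring is only guaranteed to be a positive constant once $k$ is large enough relative to $p$, because for small $k$ a non-quasirandom coloring can still have all codegrees close to the ``random'' values. Concretely, I would show that if the coloring is not $(p,\theta)$-quasirandom then, by Chung--Graham--Wilson, the blue-cycle-type count $\mathrm{tr}(A_B^{k})$ or the count of blue $C_4$'s is bounded away from the quasirandom value, and then amplify this discrepancy through $k$ applications of Cauchy--Schwarz (one per layer of the book) to produce a constant fraction of $k$-cliques with codegree exceeding the threshold by a constant.

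The main obstacle, and the step requiring the most care, is making the amplification quantitatively honest: I need to convert a ``global'' discrepancy (the coloring is not $(p,\theta)$-quasirandom, a statement about a single pair $X,Y$) into a ``local'' statement about a positive density of $k$-cliques each individually having large one-color codegree. The natural route is: non-quasirandomness $\Rightarrow$ (via Chung--Graham--Wilson) the second-largest eigenvalue of $A_B$ (or of $A_B - pJ$) is $\Omega_\theta(N)$ $\Rightarrow$ there is a vertex subset $U$ of linear size in which the blue density deviates from $p$ by a constant $\Rightarrow$ inside $U$ (and its complement relative structure) a constant fraction of $(k{-}1)$-cliques of the appropriate color have a common neighborhood that is denser than $p$ (or $1-p$) by a constant $\Rightarrow$ extending one more step gives the claimed $k$-cliques. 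Each implication loses constant factors depending on $\theta$ and on $k$, and the deviation must survive all $k$ steps, which is why we take $\gamma = \gamma(p,k,\theta)$ small at the end and why a careful choice of which color to track at each layer (switching colors greedily to always follow the ``denser than expected'' side) is needed; the genuinely delicate point is ensuring that at the final layer the excess codegree is measured against the correct baseline $p^k$ or $(1-p)^k$ and not some intermediate product, which is precisely what the largeness of $k$ buys us since $p^k, (1-p)^k \to 0$ and any fixed constant excess eventually dominates. I would carry this out by induction on the number of book layers, maintaining at each stage a family of partial cliques of linear-in-$N$ count whose common neighborhood in one of the colors has density bounded away from the ``expected'' product, with the base case being the eigenvalue/density-deviation statement and the inductive step a single Cauchy--Schwarz.
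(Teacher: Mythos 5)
Your proposal takes a genuinely different route from the paper, but as sketched it has several serious gaps that prevent it from working without substantial new ideas.

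First, your passage from clique counts to quasirandomness is backwards. Knowing that the number of blue $K_{k+1}$ (and red $K_{k+1}$) is close to the quasirandom value does \emph{not} imply the coloring is $(p,\theta)$-quasirandom; the Chung--Graham--Wilson equivalence requires control of the $C_4$-count (equivalently, pairwise codegrees or the eigenvalue), and upper bounds on $k$-clique codegrees do not descend to the $2$-set level in any obvious way. The claim that ``among all colorings with a given blue edge density, the one minimizing the blue $K_{k+1}$-count plus red $K_{k+1}$-count is the quasirandom one'' is also false; for example, Kruskal--Katona-type extremizers of clique counts at fixed edge density are far from quasirandom, and the two-color joint minimization problem has a completely different extremal structure (indeed, the $k$-partite coloring discussed in \cref{sec:k-partite} minimizes one of the two counts to zero, which is exactly why this theorem needs $k\geq k_0(p)$).

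Second, your proposed amplification from ``eigenvalue/discrepancy deviation'' to ``a constant fraction of $k$-cliques with codegree exceeding $p^kN$ or $(1-p)^kN$ by a constant'' is where the heart of the matter lies, but your sketch does not actually bridge this. A dense subset $U$ of linear size does not by itself contain many one-colored $K_k$, let alone $K_k$ with a dense common neighborhood: one needs a counting lemma, which requires regularity. Moreover, your suggestion to ``switch colors greedily at each layer'' does not address the real constraint, namely that at the end the excess must be measured against the specific thresholds $p^kN$ and $(1-p)^kN$ simultaneously in both colors. The paper handles this not by a Cauchy--Schwarz iteration but by a sharp analytic inequality (\cref{lem:offdiagxi}: $p^{1-k}\prod x_i + \frac{(1-p)^{1-k}}{k}\sum(1-x_i)^k\geq 1$ once $k \geq k_1(p)$) together with a stability version of it (\cref{lem:stable-offdiag-xi}, proved via H\"older's defect formula). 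This inequality encodes exactly the trade-off between the two colors and is where the largeness of $k$ enters. Your argument has nothing that plays this role; ``$p^k,(1-p)^k\to0$ so any fixed excess eventually dominates'' is not a substitute, since the excess you can extract from non-quasirandomness also shrinks with $k$.

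For comparison, the paper's proof applies the regularity lemma inside the unprocessed part of $V$, uses the reduced graph and Tur\'an's theorem to find a $(k,\eta,\delta)$-blocked configuration, shows via the stability inequality (\cref{lem:density-p} and \cref{lem:block-config-regular}) that either the coloring already has $(c,\gamma)$-many books or every $C_i$ in the configuration is $(p,\varepsilon)$-regular to the entire vertex set, and then iteratively extracts such regular pieces (\cref{lem:pull-out-regular}) until the remainder is negligible; a final partition argument (\cref{lem:partition-implies-quasirandomness}) then yields quasirandomness. The regularity lemma is what converts density into clique counts, and the analytic stability inequality is what forces the threshold to be $p^kN$ and $(1-p)^kN$ rather than some worse baseline. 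Both ingredients are missing from your sketch, and filling them in along your proposed eigenvalue/Cauchy--Schwarz route would amount to a different (and nontrivial) proof of the same lemmas, not an avoidance of them.

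Your first paragraph, incidentally, does correctly outline a proof of the \emph{converse} statement (\cref{thm:quasirandom-converse}), where the double-counting of $K_{k+2}-e$ and a second-moment argument suffices; but that direction is considerably easier and does not face any of the issues above.
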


\begin{rem}
	As stated, this theorem does not mention the ``off-diagonalness'' parameter $c$ from the previous theorem. But $c$ can easily be recovered as $((1-p)/p)^k$ and the theorem can then be restated to be about blue books with slightly more than $n$ pages or red books with slightly more than $cn$ pages. However, since $p$ is what matters while $c$ plays no real role in the argument, we instead choose to use this language and avoid $c$ entirely.
\end{rem}

In \cref{thm:quasirandom-converse}, we also prove a converse to \cref{thm:quasirandom}, which implies that for $p$ fixed and $k$ sufficiently large in terms of $p$, a coloring of $K_N$ (or, more accurately, a sequence of colorings with $N$ tending to infinity) is $(p, o(1))$-quasirandom if and only if all but $o(N^k)$ red $K_k$ have at most $((1-p)^k+o(1))N$ extensions to a red $K_{k+1}$ and all but $o(N^k)$ blue $K_k$ have at most $(p^k+o(1))N$ extensions to a blue $K_{k+1}$. Thus, we derive a new equivalent formulation for $(p, o(1))$-quasirandomness.

The rest of the paper is organized as follows. In \cref{sec:quoted-results}, we quote (mostly without proof) a number of key results that we will use repeatedly. In \cref{sec:k-partite}, we establish \cref{thm:goodness-stability}, the stability result for small $c$. We prove \cref{thm:offdiagupper}, that the random bound is asymptotically tight once $c$ is not too small, in \cref{sec:ramsey-result} and \cref{thm:quasirandom}, that extremal colorings are quasirandom in this range, in \cref{sec:quasirandomness}. We end with some concluding remarks and open problems.

\subsection{Notation and Terminology}

If $X$ and $Y$ are two vertex subsets of a graph, let $e(X,Y)$ denote the number of pairs in $X \times Y$ that are edges. We will often normalize this and consider the \emph{edge density}
\[
	d(X,Y)=\frac{e(X,Y)}{|X||Y|}.
\]
If we consider a red/blue coloring of the edges of a graph, then $e_B(X,Y)$ and $e_R(X,Y)$ will denote the number of pairs in $X \times Y$ that are blue and red edges, respectively. Similarly, $d_B$ and $d_R$ will denote the blue and red edge densities, respectively. Finally, for a vertex $v$ and a set $Y$, we will sometimes abuse notation and write $d(v,Y)$ for $d(\{v\},Y)$ and similarly for $d_B$ and $d_R$. 

An \emph{equitable partition} of a graph $G$ is a partition of the vertex set $V(G)=V_1 \sqcup \dotsb \sqcup V_m$ with $||V_i|-|V_j||\leq 1$ for all $1 \leq i,j \leq m$. A pair of vertex subsets $(X,Y)$ is said to be \emph{$\varepsilon$-regular} if, for every $X' \subseteq X$, $Y' \subseteq Y$ with $|X'| \geq \varepsilon|X|$, $|Y'| \geq \varepsilon |Y|$, we have
\[
	|d(X,Y)-d(X',Y')| \leq \varepsilon.
\]
Note that we do not require $X$ and $Y$ to be disjoint. In particular, we say that a single vertex subset $X$ is \emph{$\varepsilon$-regular} if the pair $(X,X)$ is $\varepsilon$-regular. We will often need a simple fact, known as the \emph{hereditary property} of regularity, which asserts that for any $0<\alpha\leq1$, if $(X,Y)$ is $\varepsilon$-regular and $X' \subseteq X$, $Y' \subseteq Y$ satisfy $|X'| \geq \alpha |X|$, $|Y'| \geq \alpha |Y|$, then $(X',Y')$ is $(\max\{\varepsilon/\alpha, 2\varepsilon\})$-regular. 

For real numbers $a,b$, we denote by $a \pm b$ any quantity in the interval $[a-b, a+b]$. All logarithms are base $e$ unless otherwise specified. For the sake of clarity of presentation, we systematically omit floor and ceiling signs whenever they are not crucial. In this vein, whenever we have an equitable partition of a vertex set, we will always assume that all of the parts have exactly the same size, rather than being off by at most one. Because the number of vertices in our graphs will always be ``sufficiently large'', this has no effect on our final results.
\label{rem:equitable-partitions}

\section{Results from earlier work} \label{sec:quoted-results}

In this section, we collect some useful tools for the study of book Ramsey numbers, all of which have appeared in previous works. We begin with several results from the theory around Szemer\'edi's regularity lemma and then quote two simple analytic inequalities.

\subsection{Tools from regularity}

We begin with a strengthened form of Szemer\'edi's regularity lemma taken from our first paper \cite[Lemma 2.1]{CoFoWi}.

\begin{lem}\label{reglem}
	For every $\varepsilon>0$ and $M_0 \in \mathbb N$, there is some $M=M(\varepsilon,M_0) \geq M_0$ such that, for every graph $G$ with at least $M_0$ vertices, 
	there is an equitable partition $V(G)=V_1 \sqcup \dotsb \sqcup V_m$ into $M_0 \leq m \leq M$ parts such that the following hold:
	\begin{enumerate}
		\item Each part $V_i$ is $\varepsilon$-regular and
		\item For every $1 \leq i \leq m$, there are at most $\varepsilon m$ values $1 \leq j \leq m$ such that the pair $(V_i,V_j)$ is not $\varepsilon$-regular.
	\end{enumerate}
\end{lem}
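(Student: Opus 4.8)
The plan is to deduce this from the ordinary Szemer\'edi regularity lemma by adapting its energy-increment proof, adding two features to the conclusion: self-regularity of every part (item 1) and the ``degree-form'' bound that each part lies in few irregular pairs (item 2). Concretely, I would first apply the ordinary regularity lemma with a parameter $\varepsilon' \ll \varepsilon$ (small enough to absorb the later losses) and lower bound $M_0$, obtaining an equitable partition $V(G) = V_1 \sqcup \dotsb \sqcup V_m$ into $M_0 \leq m \leq M(\varepsilon', M_0)$ parts with at most $\varepsilon' m^2$ irregular pairs, and then upgrade this partition in two steps. The parameters must be chosen in the right order: each upgrade step only increases the number of parts by a bounded factor and only degrades the regularity parameter of an already-controlled pair by a bounded factor (via the hereditary property of regularity), so starting from a sufficiently small $\varepsilon'$ leaves us, at the end, with the promised $\varepsilon$-regularity.

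For item 2 the standard cleanup suffices. By pigeonhole, at most $2\sqrt{\varepsilon'}\,m$ parts lie in at least $\sqrt{\varepsilon'}\,m$ irregular pairs; delete these ``bad'' parts and redistribute their vertices equitably among the remaining $(1 - 2\sqrt{\varepsilon'})m$ parts. Since each surviving part then acquires only an $O(\sqrt{\varepsilon'})$-fraction of new vertices, no density computation involving a not-too-small subset is affected by more than $O(\sqrt{\varepsilon'}/\varepsilon)$, so regularity (including self-regularity) of the surviving pairs is preserved up to a bounded factor, and each surviving part now lies in at most $\sqrt{\varepsilon'}\,m \leq \varepsilon m$ irregular pairs. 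For item 1, I would additionally refine any part $V_i$ for which the pair $(V_i, V_i)$ fails to be $\varepsilon$-regular: taking $X, Y \subseteq V_i$ witnessing the irregularity and passing to the (at most four) cells $X \cap Y$, $X \setminus Y$, $Y \setminus X$, $V_i \setminus (X \cup Y)$, the defect form of the Cauchy--Schwarz inequality shows that the mean-square edge density $q(\mathcal{P}) = \sum_{i,j} \frac{\ab{V_i}\ab{V_j}}{N^2} d(V_i,V_j)^2$ strictly increases. Since $q(\mathcal{P}) \leq 1$ throughout and the operation never decreases $q$, this should let us drive the process to a partition in which every part is self-$\varepsilon$-regular; a final re-equitizing step and one more pass of the item-2 cleanup then give the statement.

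\emph{The main obstacle} is the last step: showing that the ``refine every non-self-regular part'' process halts after a number of refinements depending only on $\varepsilon$ and $M_0$, not on $\ab{V(G)}$. The delicacy is that refining one non-self-regular part $V_i$ can produce new non-self-regular parts among its cells, while the energy increment it buys, roughly $\varepsilon^4 (\ab{V_i}/N)^2$, shrinks as the partition grows, so a naive accounting only shows the \emph{total} gain of such a chain of refinements is bounded --- consistent with an infinite chain. Resolving this is where the real work lies; I would attack it either by a more careful potential argument (weighting the energy by scale, or iterating the increment bound as in the proof of the strong regularity lemma), or by reorganising the whole argument as a nested application of the ordinary regularity lemma --- applying it once to $G$ and then again to the weighted reduced graph on the resulting clusters, with the first partition taken much finer than the second, so that within each group of clusters the ``diagonal'' blocks occupy a negligible fraction and self-regularity of each final part follows from regularity of the reduced graph restricted to that group. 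Either route requires some bookkeeping, but no new ideas beyond the defect Cauchy--Schwarz inequality and the hereditary property.
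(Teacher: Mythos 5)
You have correctly identified what the crux of this lemma is, and you deserve credit for that: the subtle part is neither the degree form (item 2), which is a standard cleanup, nor the fact that a non-self-regular part yields an energy increment, but rather the \emph{termination} of the iteration that enforces self-regularity of every part. The increment from splitting a single non-self-regular $V_i$ is of order $\varepsilon^4\,|V_i|^2/N^2$, which is $\varepsilon^4/m^2$ in an equitable partition, and when you sum this over a $\varepsilon'$-fraction of parts you only get $\varepsilon^4\varepsilon'/m$. As $m$ grows along the iteration, these increments can be summable even along an infinite chain of refinements, so boundedness of the energy does not force the process to halt. Your honesty about this being ``where the real work lies'' is exactly right; what you have written is a sketch that stops at the hard step.

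The problem is that neither of your two suggested repairs survives scrutiny. Reweighting the energy so that diagonal blocks get extra weight (say $w_{ii} = m\ab{V_i}^2$) does give a per-step increment that is independent of $m$, but such a weighted potential is not monotone under refinement: when you split $V_i$ into two halves, the old ``heavy'' diagonal term $m\ab{V_i}^2 d(V_i,V_i)^2$ is redistributed partly into an off-diagonal term weighted only by $\ab{V_{i,1}}\ab{V_{i,2}}$, and it is easy to write down densities for which the weighted energy strictly decreases. Iterating ``as in the strong regularity lemma'' runs into the same wall: the statement you would need is an index $t$ with $q(\mathcal{P}_{t+1}) - q(\mathcal{P}_t) \leq g(\ab{\mathcal{P}_t})$ for a function $g$ decaying faster than $1/m$, and since $\ab{\mathcal{P}_t}$ grows at tower rate while $\sum_t g(\ab{\mathcal{P}_t})$ can converge, such a $t$ need not exist. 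Finally, the ``nested application'' idea---applying the regularity lemma to the weighted reduced graph and taking the final parts to be unions of clusters---is circular where it matters. You are right that taking the first partition much finer than the second kills the contribution of the true diagonal blocks $V_i \times V_i$ inside a group $A_\ell$, but the self-regularity of $W_\ell = \bigcup_{i\in A_\ell} V_i$ then hinges on the densities $d(V_i,V_j)$ being nearly constant over $A_\ell \times A_\ell$, i.e.\ on $A_\ell$ being \emph{self}-regular in the reduced graph. Ordinary Szemer\'edi regularity applied to the reduced graph gives you pairwise regularity of the groups $A_\ell$, not self-regularity, so you have merely moved the same problem one level up without gaining anything. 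In short: the obstacle you flagged is real, and the repairs you sketch do not get past it, so this does not constitute a proof. (The paper itself only quotes the lemma from the prequel \cite{CoFoWi}, so I cannot tell you which route those authors took; but whatever it is, it must contain an idea you have not supplied.)
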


To complement the regularity lemma, we will also need a standard counting lemma (see, e.g., \cite[Theorems~2.6.2 and 4.5.1]{Zhao}).

\begin{lem}\label{lem:countinglemma}
	Suppose that $V_1,\ldots,V_k$ are (not necessarily distinct) subsets of a graph $G$ such that all pairs $(V_i,V_j)$ are $\varepsilon$-regular. Then the number of labeled copies of $K_k$ whose $i$-th vertex is in $V_i$ for all $i$ is 
	\[
		\left( \prod_{1 \leq i<j\leq k}d(V_i,V_j) \pm \varepsilon \binom k2 \right) \prod_{i=1}^k |V_i|.
	\]
\end{lem}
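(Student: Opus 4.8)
The plan is to run the standard ``telescoping over the edges of $K_k$'' argument: begin with the exact clique count, and then replace, one pair $\{i,j\}$ at a time, the adjacency indicator of $(V_i,V_j)$ by the constant $d(V_i,V_j)$, showing that each such replacement changes the count by at most $\varepsilon\prod_i\ab{V_i}$. Since there are $\binom k2$ replacements, this is exactly where the error term $\varepsilon\binom k2\prod_i\ab{V_i}$ comes from.

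In detail, I would write $A_{ij}(x,y)=\mathbf{1}[xy\in E(G)]$ for $x\in V_i$, $y\in V_j$, so that the number of labeled copies of $K_k$ with $i$th vertex in $V_i$ is exactly $T:=\sum_{(v_1,\dots,v_k)\in V_1\times\cdots\times V_k}\prod_{i<j}A_{ij}(v_i,v_j)$; note that tuples with a repeated vertex contribute $0$ since $G$ has no loops, so this sum genuinely counts cliques. I would then enumerate the $m:=\binom k2$ pairs as $e_1,\dots,e_m$ and, for $0\le t\le m$, let $F_t$ be the analogous sum in which $A_{e_s}$ has been replaced by the constant $d(V_{i_s},V_{j_s})$ for all $s\le t$. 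Then $F_0=T$ while $F_m=\bigl(\prod_{i<j}d(V_i,V_j)\bigr)\prod_i\ab{V_i}$, so the lemma reduces to the bound $\ab{F_t-F_{t-1}}\le\varepsilon\prod_i\ab{V_i}$ for each $t$, which on summing over $t$ gives precisely $\ab{T-F_m}\le\varepsilon\binom k2\prod_i\ab{V_i}$.

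To estimate a single step, with $e_t=\{i_0,j_0\}$, I would freeze all coordinates $v_\ell$ with $\ell\notin\{i_0,j_0\}$: in the remaining product $\prod_{s\ne t}(\cdots)$ every factor is attached to a pair $e_s\ne e_t$ and so (having size $2$) involves at most one of $v_{i_0},v_{j_0}$, hence with the other coordinates frozen the product factorizes as $c\cdot a(v_{i_0})\cdot b(v_{j_0})$ with $a,b,c$ all valued in $[0,1]$. Consequently $F_t-F_{t-1}$ is an average over the frozen coordinates of $c\cdot\sum_{x\in V_{i_0},\,y\in V_{j_0}}\bigl(d(V_{i_0},V_{j_0})-A_{i_0j_0}(x,y)\bigr)a(x)b(y)$, and since $0\le c\le1$ it is enough to show that $\bigl\lvert\sum_{x,y}\bigl(A_{i_0j_0}(x,y)-d(V_{i_0},V_{j_0})\bigr)a(x)b(y)\bigr\rvert\le\varepsilon\ab{V_{i_0}}\ab{V_{j_0}}$ for all weights $0\le a,b\le1$. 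This last discrepancy inequality is the step I would treat most carefully: for indicator weights $a=\mathbf{1}_{X'}$, $b=\mathbf{1}_{Y'}$ the left side equals $\ab{e(X',Y')-d(V_{i_0},V_{j_0})\ab{X'}\ab{Y'}}$, which is $\le\varepsilon\ab{X'}\ab{Y'}\le\varepsilon\ab{V_{i_0}}\ab{V_{j_0}}$ by $\varepsilon$-regularity of $(V_{i_0},V_{j_0})$ when $\ab{X'}\ge\varepsilon\ab{V_{i_0}}$ and $\ab{Y'}\ge\varepsilon\ab{V_{j_0}}$, and otherwise is trivially $\le\ab{X'}\ab{Y'}<\varepsilon\ab{V_{i_0}}\ab{V_{j_0}}$; the general case then follows by writing $a=\int_0^1\mathbf{1}_{\{a>s\}}\dd s$ and $b=\int_0^1\mathbf{1}_{\{b>s\}}\dd s$ and integrating the indicator bound. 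I do not expect a genuine obstacle — the lemma is standard — the only mild care needed is the bookkeeping that the frozen product really splits as $c\cdot a\cdot b$ with all three factors in $[0,1]$, since this is exactly what makes the (weighted form of the) regularity estimate applicable.
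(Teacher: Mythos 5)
Your proof is correct. The paper does not prove this lemma itself but cites it from Zhao's lecture notes, so there is no ``paper proof'' to compare against; your telescoping (hybrid) argument is the standard proof of the counting lemma in exactly this sharp form, where the error $\varepsilon\binom k2$ arises because each of the $\binom k2$ single-pair swaps costs at most $\varepsilon\prod_i|V_i|$. All the key steps are handled properly: the factorization of the frozen product into $c\cdot a(v_{i_0})\cdot b(v_{j_0})$ with all three factors in $[0,1]$ (since $e_s\neq e_t$ forces $|e_s\cap e_t|\le 1$), the weighted discrepancy bound including the trivial small-set case, and the layer-cake reduction from $[0,1]$-valued weights to indicators. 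The observation that diagonal tuples automatically vanish because $G$ is loopless also correctly handles the ``not necessarily distinct'' clause. One tiny slip: you describe $F_t-F_{t-1}$ as an ``average'' over the frozen coordinates, but for the arithmetic to give the stated $\varepsilon\prod_i|V_i|$ it must be the unnormalized sum over the frozen coordinates, with each summand weighted by $c\in[0,1]$ and bounded by $\varepsilon|V_{i_0}||V_{j_0}|$; this is clearly what you mean, as your stated per-step bound is the correct one.
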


We will frequently use the following consequence of the counting lemma, proved in \cite[Corollary 2.6]{CoFoWi}, designed to count monochromatic extensions of cliques and thus estimate the size of monochromatic books. 

\begin{lem}\label{lem:randomclique}
	Fix $k \geq 2$ and let $\eta,\alpha \in (0,1)$ be parameters with $\eta \leq \alpha^3/k^2$. Suppose $U_1,\ldots,U_k$ are (not necessarily distinct) vertex sets in a graph $G$ and suppose that all pairs $(U_i,U_j)$ are $\eta$-regular with $\prod_{1 \leq i<j\leq k}d(U_i,U_j) \geq \alpha$. Let $Q$ be a uniformly random copy of $K_k$ with one vertex in each $U_i$, for $1 \leq i \leq k$, and say that a vertex $u$ extends $Q$ if $u$ is adjacent to every vertex of $Q$. Then, for any $u \in V(G)$,
	\begin{equation*}
		\pr(u \text{ extends }Q) \geq \prod_{i=1}^k d(u,U_i)-4 \alpha.
	\end{equation*}
\end{lem}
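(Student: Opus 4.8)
The plan is to evaluate $\pr(u\text{ extends }Q)$ exactly as a ratio of two clique counts and then estimate both counts using the counting lemma (\cref{lem:countinglemma}). Write $\beta_i := d(u,U_i)$ and set $W_i := N(u)\cap U_i$, so that $|W_i| = \beta_i|U_i|$. First I would dispose of a trivial case: if $\prod_{i=1}^k\beta_i \le 4\alpha$, then the claimed bound holds vacuously, so I may assume $\prod_{i=1}^k\beta_i > 4\alpha$; since each $\beta_i \le 1$, this forces $\beta_i > 4\alpha$ for every $i$ (and in particular $\alpha < 1/4$). A copy $Q$ of $K_k$ with one vertex in each $U_i$ is extended by $u$ exactly when each of its vertices lies in the corresponding $W_i$, so
\[
    \pr(u\text{ extends }Q) = \frac{N(W_1,\dots,W_k)}{N(U_1,\dots,U_k)},
\]
where $N(A_1,\dots,A_k)$ denotes the number of labeled copies of $K_k$ with $i$th vertex in $A_i$.

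Next I would apply \cref{lem:countinglemma} to both quantities. For the denominator, $\eta$-regularity of every pair $(U_i,U_j)$ together with $\prod_{i<j}d(U_i,U_j) \ge \alpha$ gives $N(U_1,\dots,U_k) = \big(D_U \pm \eta\binom k2\big)\prod_i|U_i|$, where $D_U := \prod_{i<j}d(U_i,U_j) \in [\alpha,1]$. For the numerator I need two ingredients. (i) Regularity of the pairs $(W_i,W_j)$: since $|W_i| \ge 4\alpha|U_i|$, the hereditary property of regularity shows each $(W_i,W_j)$ is $\eta'$-regular with $\eta' := \max\{\eta/(4\alpha),2\eta\}$, and the hypothesis $\eta \le \alpha^3/k^2$ makes $\eta'$ comfortably small; in particular $\binom k2\eta' \le \alpha^2$. (ii) A lower bound on $D_W := \prod_{i<j}d(W_i,W_j)$: applying $\eta$-regularity of $(U_i,U_j)$ to the subsets $W_i,W_j$ (legitimate because $|W_i| \ge 4\alpha|U_i| \ge \eta|U_i|$) gives $d(W_i,W_j) \ge d(U_i,U_j) - \eta$ for each pair, and telescoping over the $\binom k2$ factors yields $D_W \ge D_U - \binom k2\eta$. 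Then \cref{lem:countinglemma} gives $N(W_1,\dots,W_k) = \big(D_W \pm \eta'\binom k2\big)\big(\prod_i\beta_i\big)\prod_i|U_i|$.

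Finally I would combine these estimates. The factors $\prod_i|U_i|$ cancel, so
\[
    \pr(u\text{ extends }Q) = \Big(\prod_{i=1}^k\beta_i\Big)\cdot\frac{D_W \pm \eta'\binom k2}{D_U \pm \eta\binom k2} \ge \Big(\prod_{i=1}^k\beta_i\Big)\cdot\frac{D_U - \binom k2(\eta+\eta')}{D_U + \binom k2\eta}.
\]
Rewriting the last fraction as $1 - \frac{\binom k2(2\eta+\eta')}{D_U+\binom k2\eta} \ge 1 - \frac{\binom k2(2\eta+\eta')}{\alpha}$ and using $\binom k2\eta \le \alpha^3/2$ and $\binom k2\eta' \le \alpha^2$, the fraction is at least $1-2\alpha$. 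Hence $\pr(u\text{ extends }Q) \ge \big(\prod_i\beta_i\big)(1-2\alpha) \ge \prod_i\beta_i - 2\alpha \ge \prod_i\beta_i - 4\alpha$, using $\prod_i\beta_i \le 1$.

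The only delicate point — and the main thing to get right — is tracking the decay of the regularity parameter and ensuring every error term coming out of the counting lemma stays of order $\alpha^2$. This is precisely what the hypothesis $\eta \le \alpha^3/k^2$ buys: passing from $\eta$ to $\eta' \approx \eta/\alpha$ via the hereditary property costs one power of $\alpha$, the factor $\binom k2 \approx k^2/2$ is absorbed by the $k^2$ in the denominator, and the residual $\alpha^2$ becomes the final $O(\alpha)$ loss after dividing by $D_U \ge \alpha$.
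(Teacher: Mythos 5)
Your proof is correct. The paper does not prove this lemma itself but cites it from \cite[Corollary~2.6]{CoFoWi}; your derivation — writing $\pr(u\text{ extends }Q)$ as a ratio of two clique counts, applying \cref{lem:countinglemma} to each, passing the regularity of $(U_i,U_j)$ down to $(W_i,W_j)$ via the hereditary property (after disposing of the trivial case $\prod_i\beta_i\le 4\alpha$ so that $|W_i|>4\alpha|U_i|$), and telescoping $\prod(d_{ij}-\eta)\ge D_U-\binom k2\eta$ — is the natural route, and the parameter accounting (each error term kept at order $\alpha^2$ so that dividing by $D_U\ge\alpha$ gives an $O(\alpha)$ loss) is carried out carefully and gives the slightly stronger constant $2\alpha$ in place of $4\alpha$.
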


The final result in this subsection is actually a simple consequence of Markov's inequality and so does not require any regularity tools to prove. Nonetheless, we will always use it in conjunction with \cref{lem:countinglemma,lem:randomclique}, which is why we include it here. Both the statement and proof are very similar to \cite[Lemma 5.2]{CoFoWi}.

\begin{lem}\label{lem:markov-consequence}
	Let $\kappa,\xi \in (0,1)$, let $0<\nu <\xi$, and suppose that $\Q$ is a set of at least $\kappa N^k$ copies of $K_k$ in an $N$-vertex graph. Suppose that a uniformly random $Q \in\Q$ has at least $\xi N$ extensions to a $K_{k+1}$ in expectation. Then the graph contains at least $(\xi-\nu )\kappa N^k$ copies of $K_k$, each with at least $\nu N$ extensions.
\end{lem}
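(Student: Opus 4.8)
The plan is to run a direct averaging argument — essentially Markov's inequality dressed up with the hypotheses of the lemma, requiring no regularity input at all (it is grouped in this section only for expository convenience). Write $\ext(Q)$ for the number of vertices that extend a given copy $Q$ of $K_k$ to a copy of $K_{k+1}$, so that $0 \le \ext(Q) \le N$ for every $Q$. The hypothesis says that $\frac{1}{\ab{\Q}}\sum_{Q \in \Q} \ext(Q) \ge \xi N$, and we also know $\ab{\Q} \ge \kappa N^k$.

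The key step is to introduce the sub-collection $\Q' \subseteq \Q$ consisting of those $Q$ with $\ext(Q) \ge \nu N$; these are precisely the copies of $K_k$ we want to count. Splitting the sum $\sum_{Q \in \Q}\ext(Q)$ according to whether $Q \in \Q'$, bounding $\ext(Q) \le N$ on $\Q'$ and $\ext(Q) < \nu N$ off $\Q'$, gives
\[
    \xi N \ab{\Q} \le \sum_{Q \in \Q}\ext(Q) \le \ab{\Q'}\, N + (\ab{\Q}-\ab{\Q'})\,\nu N \le \ab{\Q'}\,N + \ab{\Q}\,\nu N .
\]
Rearranging yields $\ab{\Q'} \ge (\xi-\nu)\ab{\Q}$, and since $\xi>\nu$ the right-hand side is positive, so we may apply $\ab{\Q}\ge\kappa N^k$ to conclude $\ab{\Q'}\ge(\xi-\nu)\kappa N^k$, which is exactly the claimed bound.

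There is essentially no obstacle here; the only things to keep an eye on are bookkeeping matters — that $\Q$ is a set of (fixed, unlabeled) copies of $K_k$ so that no labeling factors intrude, that "extension" really means a single vertex adjacent to all of $Q$, and that the crude bound $\ext(Q)\le N$ (rather than $N-k$) is harmless. All of these are immediate, so the proof is short.
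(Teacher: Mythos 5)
Your proof is correct and is essentially the same argument as the paper's: both are Markov-type averaging arguments, the only cosmetic difference being that the paper applies Markov's inequality to the auxiliary nonnegative variable $Y=N-\ext(Q)$, while you carry out the same averaging directly by splitting $\sum_{Q\in\Q}\ext(Q)$ into the contributions from $\Q'$ and its complement and bounding each part.
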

\begin{proof}
	Let $X$ be the random variable counting the number of extensions of a random $Q \in \Q$ and let $Y=N-X$. Then $Y$ is a non-negative random variable with $\E[Y]=N-\E[X] \leq (1-\xi)N$. By Markov's inequality,
	\[
		\pr(X \leq \nu  N)=\pr \left( Y\geq (1-\nu ) N \right) \leq \frac{\E[Y]}{(1-\nu )N} \leq \frac{(1-\xi)N}{(1-\nu )N}=\frac{1-\xi}{1-\nu }.
	\]
	Thus, 
	\[
		\pr(X \geq \nu  N)\geq 1- \frac{1-\xi}{1-\nu }=\frac{\xi-\nu }{1-\nu } \geq \xi-\nu ,
	\]
	which implies that the number of $Q \in \Q$ with at least $\nu N$ extensions is at least $(\xi-\nu )|\Q| \geq (\xi-\nu )\kappa N^k$, as desired.
\end{proof}

\subsection{Analytic inequalities}

The following lemma is a multiplicative form of Jensen's inequality and is a simple consequence of the standard version. For a proof, see \cite[Lemma A.1]{CoFoWi}.

\begin{lem}[Multiplicative Jensen inequality]\label{multjensen}
	Suppose $0<a<b$ are real numbers and $x_1,\ldots,x_k \in (a,b)$. Let $f: (a,b) \to \R$ be a function such that $y \mapsto f(e^y)$ is strictly convex on the interval $(\log a,\log b)$. Then, for any $z \in (a^k,b^k)$, subject to the constraint $\prod_{i=1}^k x_i=z$, 
	\[
		\frac 1k \sum_{i=1}^k f(x_i)
	\]
	is minimized when all the $x_i$ are equal (and thus equal to $z^{1/k}$).
\end{lem}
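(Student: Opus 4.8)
The plan is to reduce the multiplicative statement to the ordinary (additive) Jensen inequality by the exponential change of variables that is built into the hypothesis. Concretely, I would set $g(y) = f(e^y)$, which by assumption is strictly convex on $(\log a, \log b)$, and substitute $y_i = \log x_i$ for each $i$, noting $y_i \in (\log a, \log b)$. Under this substitution the multiplicative constraint $\prod_{i=1}^k x_i = z$ becomes the additive constraint $\sum_{i=1}^k y_i = \log z$, and the objective transforms as $\frac 1k \sum_{i=1}^k f(x_i) = \frac 1k \sum_{i=1}^k g(y_i)$. So the problem is exactly: minimize the average of $g$ over $k$ points of $(\log a,\log b)$ whose sum is the fixed value $\log z$.

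Next I would apply the standard Jensen inequality to the convex function $g$ to obtain
\[
  \frac 1k \sum_{i=1}^k g(y_i) \;\geq\; g\!\left( \frac 1k \sum_{i=1}^k y_i \right) \;=\; g\!\left( \tfrac{1}{k}\log z \right) \;=\; f\!\left( z^{1/k} \right),
\]
the last equality since $\exp(\frac1k \log z) = z^{1/k}$. It remains to observe that the candidate minimizer is feasible: because $a^k < z < b^k$ we have $z^{1/k} \in (a,b)$, so setting $x_1 = \dots = x_k = z^{1/k}$ satisfies the constraint and achieves the value $f(z^{1/k})$. Hence the displayed inequality shows this point attains the infimum over the feasible set, i.e.\ the minimum. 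Finally, strict convexity of $g$ means equality in Jensen holds only when $y_1 = \dots = y_k$, i.e.\ when all $x_i$ coincide, so the minimizer is in fact unique.

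There is no substantive obstacle here — the lemma is essentially a restatement of Jensen's inequality after a logarithmic reparametrization. The only points requiring a moment's care are (i) verifying that $z^{1/k}$ genuinely lies in the \emph{open} interval $(a,b)$, which is where the hypothesis $z \in (a^k, b^k)$ is used and which guarantees the equal point is admissible; and (ii) the fact that the feasible set is not compact (the constraints are open), so rather than invoking an extreme-value argument one should simply note, as above, that the equal point both is feasible and meets the Jensen lower bound, and is therefore the minimizer directly. The strict convexity assumption is precisely what upgrades "minimized when the $x_i$ are equal" to a uniqueness statement, should one wish to record it.
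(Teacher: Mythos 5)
Your proof is correct and is the standard argument: substitute $y_i = \log x_i$, reduce the multiplicative constraint to an additive one, and apply ordinary Jensen to the convex function $g(y) = f(e^y)$. The paper itself defers the proof to Lemma A.1 of its prequel, which does exactly this logarithmic reparametrization, so your approach matches the intended one; your remarks on feasibility of $z^{1/k}$ and on uniqueness via strict convexity are both sound and appropriate.
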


The following theorem is the well-known ``defect'' or ``stability'' version of Jensen's inequality. For a proof, see  {\cite[Problem 6.5]{Steele}}.
\begin{thm}[H\"older's Defect Formula]\label{holderdefect}
	Suppose $\varphi:[a,b] \to \R$ is a twice-differentiable function with $\varphi''(y) \geq m > 0$ for all $y \in (a,b)$. For any $y_1,\ldots,y_k \in [a,b]$, let
	\[
		\mu = \frac 1k \sum_{i=1}^k y_i \qquad \text{ and } \qquad \sigma^2 = \frac 1k \sum_{i=1}^k (y_i-\mu)^2
	\]
	be the empirical mean and variance of $\{y_1,\ldots,y_k\}$. Then
	\[
		\frac 1k \sum_{i=1}^k \varphi(y_i) - \varphi (\mu) \geq \frac{m\sigma^2}{2}.
	\]
\end{thm}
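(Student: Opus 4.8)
The plan is to reduce the inequality to an ordinary application of Jensen's inequality by subtracting off a suitable quadratic. Set $\psi(y) := \varphi(y) - \tfrac m2 y^2$. Then $\psi$ is twice differentiable on $[a,b]$ with $\psi''(y) = \varphi''(y) - m \geq 0$ for all $y \in (a,b)$, so $\psi$ is convex on $[a,b]$. Since $\mu = \tfrac1k\sum_i y_i$ is a convex combination of the points $y_i \in [a,b]$, Jensen's inequality applies and gives
\[
    \frac 1k \sum_{i=1}^k \psi(y_i) \;\geq\; \psi\!\left( \frac 1k \sum_{i=1}^k y_i \right) = \psi(\mu).
\]
Unwinding the definition of $\psi$, this reads $\frac1k\sum_i \varphi(y_i) - \frac m2\cdot\frac1k\sum_i y_i^2 \geq \varphi(\mu) - \frac m2\mu^2$, and rearranging yields
\[
    \frac 1k \sum_{i=1}^k \varphi(y_i) - \varphi(\mu) \;\geq\; \frac m2 \left( \frac 1k \sum_{i=1}^k y_i^2 - \mu^2 \right).
\]

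It then remains only to recognize the right-hand side. Expanding the square in the definition of $\sigma^2$ and using $\frac1k\sum_i y_i = \mu$ gives the elementary identity $\sigma^2 = \frac1k\sum_i (y_i - \mu)^2 = \frac1k\sum_i y_i^2 - \mu^2$. Substituting this in produces exactly $\frac1k\sum_i \varphi(y_i) - \varphi(\mu) \geq \tfrac{m\sigma^2}{2}$, as claimed. An equivalent route, if one prefers not to quote Jensen's inequality, is to Taylor-expand $\varphi$ about $\mu$ with Lagrange remainder: for each $i$ there is $\xi_i$ strictly between $\mu$ and $y_i$ with $\varphi(y_i) = \varphi(\mu) + \varphi'(\mu)(y_i-\mu) + \tfrac12\varphi''(\xi_i)(y_i-\mu)^2 \geq \varphi(\mu) + \varphi'(\mu)(y_i-\mu) + \tfrac m2 (y_i-\mu)^2$, using that $\xi_i \in (a,b)$; averaging over $i$ kills the linear term (since $\sum_i(y_i-\mu)=0$) and yields the bound directly.

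There is essentially no obstacle here: the content is a one-line reduction to convexity, and the computation is routine. The only point worth a word of care is the degenerate endpoint case in the convexity argument: if one insists that $\varphi'' \geq m$ only on the open interval $(a,b)$, then one should note that $\mu \in \{a,b\}$ forces all $y_i$ equal, whence $\sigma^2 = 0$ and the inequality is trivial, while otherwise $\mu \in (a,b)$ and the argument above goes through verbatim (restricting, if desired, to a slightly smaller closed interval containing all the $y_i$ and $\mu$).
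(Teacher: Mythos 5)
Your proof is correct. The paper itself does not prove this theorem, citing Steele (Problem 6.5) instead, so there is no in-paper argument to compare against; your subtraction-of-a-quadratic reduction to Jensen (and the alternative Taylor/Lagrange route you sketch) are both the standard, clean ways to establish H\"older's defect formula, and your handling of the boundary case, while unnecessary (continuity on $[a,b]$ together with $\psi''\geq 0$ on the interior already gives convexity on the closed interval, so Jensen applies verbatim for any $\mu\in[a,b]$), is harmless.
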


\section{The \texorpdfstring{$k$}{k}-partite regime}\label{sec:k-partite}

In this section, we analyze what happens when $c$ is very small. Recall, from the introduction, that a simple $k$-partite construction yields a lower bound for $r(B_{cn}\up k,B_n \up k)$ and, by a result of Nikiforov and Rousseau \cite{NiRo09}, this construction is tight for $c$ sufficiently small. 

\begin{thm:goodness-tight}[{Nikiforov--Rousseau, \cite[Theorem 2.12]{NiRo09}}]
    For every $k \geq 2$, there exists some $c_0 \in (0,1)$ such that, for any $0<c \leq c_0$ and $n$ sufficiently large,
    \[
        r(B_{cn}\up k, B_n \up k) = k(n+k-1)+1.
    \]
\end{thm:goodness-tight}

Our aim here is to adapt the methods of \cite{NiRo09} to prove a stability version of this theorem, our \cref{thm:goodness-stability}. We first make the following definition.

\begin{Def}\label{def:c-gamma-many}
    For $c,\gamma>0$, we say that a red/blue coloring of $E(K_N)$ contains \emph{$(c,\gamma)$-many books} if it contains
    \begin{itemize}
        \item at least $\gamma N^k$ red $K_k$, each with at least $(\frac ck +\gamma)N$ extensions to a red $K_{k+1}$, or
        \item at least $\gamma N^k$ blue $K_k$, each with at least $(\frac 1k +\gamma)N$ extensions to a blue $K_{k+1}$.
    \end{itemize}
\end{Def}

With this definition in place, we may restate \cref{thm:goodness-stability} as follows.

\begin{thm:goodness-stability}
    For every $k \geq 2$ and every $\theta>0$, there exist $c,\gamma \in (0,1)$ such that the following holds. If a red/blue coloring of $E(K_N)$ does not have $(c,\gamma)$-many books, then one can recolor at most $\theta N^2$ edges to turn the red graph into a balanced complete k-partite graph. 
\end{thm:goodness-stability}

As well as referring to  \cref{sec:quoted-results}, we will need 
the following classical result 
of Andr\'asfai, Erd\H os, and S\'os \cite{AnErSo} (see also \cite{Brandt} for a simpler proof).

\begin{thm}[Andr\'asfai--Erd\H os--S\'os \cite{AnErSo}]\label{thm:andrasfai-erdos-sos}
    For every $k \geq 2$, there exists $\rho > 0$ such that if $G$ is a $K_{k+1}$-free graph on $m$ vertices with minimum degree greater than $(1-\frac 1k - \rho)m$, then $G$ is $k$-partite. Moreover, one may take $\rho = 1/(3k^2-k)$.
\end{thm}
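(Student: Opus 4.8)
The plan is to prove \cref{thm:andrasfai-erdos-sos} by induction on $k$, the engine being that the degree hypothesis is calibrated so that neighborhoods inherit it with $k$ decreased by one. It is convenient to first rewrite the hypothesis: since $\tfrac{1}{k}+\tfrac{1}{3k^2-k}=\tfrac{3}{3k-1}$, the condition $\delta(G)>(1-\tfrac1k-\tfrac{1}{3k^2-k})m$ is precisely $\delta(G)>\tfrac{3k-4}{3k-1}m$, and it is this form I would track throughout. The base case $k=1$ is immediate: a $K_2$-free graph is edgeless, hence $1$-partite, with the degree condition vacuous.

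For the inductive step, assume the statement for $k-1$ and let $G$ be $K_{k+1}$-free on $m$ vertices with $\delta(G)>\tfrac{3k-4}{3k-1}m$. Fix any vertex $v$ and put $A=N(v)$. Since $G$ is $K_{k+1}$-free, $G[A]$ is $K_k$-free, and for every $u\in A$ one has $d_{G[A]}(u)\ge d(u)+d(v)-m$; a one-line computation, using exactly the inequality $(3k-1)\delta(G)>(3k-4)m$, shows this exceeds $\tfrac{3(k-1)-4}{3(k-1)-1}\ab{A}$. Thus $G[A]$ satisfies the hypotheses of the theorem with parameter $k-1$, so by induction $N(v)$ is $(k-1)$-partite. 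Hence \emph{every} neighborhood of $G$ is $(k-1)$-partite. (This is the step that forces the precise value of $\rho$: any larger choice breaks the recursion.)

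It remains to conclude that $G$ itself is $k$-partite, which is the substantive part. The implication ``all neighborhoods $(k-1)$-partite, therefore $G$ is $k$-partite'' is false without a degree assumption --- for $k=2$ it would say every triangle-free graph is bipartite, refuted by $C_5$ --- so the hypothesis $\delta(G)>\tfrac{3k-4}{3k-1}m$ must re-enter here. I would argue by contradiction: supposing $G$ is not $k$-partite, the goal is to construct a homomorphism $\phi$ from $G$ to the join $C_5*K_{k-2}$ (the graph obtained by adding all edges between a $5$-cycle and $K_{k-2}$); such a $\phi$ is automatically surjective, since every proper induced subgraph of $C_5*K_{k-2}$ is $k$-colorable while $G$ is not. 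Granting the homomorphism, the proof concludes quickly: a short optimization over part sizes shows that every blow-up of $C_5*K_{k-2}$ on $m$ vertices with all parts non-empty has minimum degree at most $\tfrac{3k-4}{3k-1}m$ (equality only for the balanced blow-up), and $\phi$ exhibits $G$ as a spanning subgraph of exactly such a blow-up, so $\delta(G)\le\tfrac{3k-4}{3k-1}m$, a contradiction. For $k=2$ the homomorphism $\phi\colon G\to C_5$ is the classical shortest-odd-cycle argument: a shortest odd cycle $C$ is chordless of length at least $5$, every vertex off $C$ has at most two neighbors on $C$ and these lie at distance two along $C$, double-counting the edges between $C$ and $V(G)\setminus C$ together with $\delta(G)>\tfrac25 m$ forces $\ab{C}=5$, and a final step maps $G$ homomorphically onto that $5$-cycle. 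The general-$k$ case is this same argument carried out relative to a fixed $K_{k-2}$, using the inductive fact that all neighborhoods are $(k-1)$-partite to keep the structure under control.

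I expect this last step --- building the homomorphism $\phi\colon G\to C_5*K_{k-2}$ --- to be the main obstacle, since it is genuinely the heart of the Andr\'asfai--Erd\H os--S\'os theorem and the point at which the finitely many extremal configurations (blow-ups of $C_5*K_{k-2}$) must be isolated and excluded; for a clean treatment it is probably best to follow Brandt's streamlined argument \cite{Brandt} rather than the original proof of \cite{AnErSo}.
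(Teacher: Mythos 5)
The paper does not prove this theorem: it quotes it as a black box, citing the original paper of Andr\'asfai, Erd\H os, and S\'os and pointing to Brandt for a simpler proof, so there is no in-paper argument to compare against. Judged on its own terms, your write-up is an outline rather than a proof, and you say so yourself: the construction of the homomorphism $\phi\colon G\to C_5*K_{k-2}$ is precisely the content of the Andr\'asfai--Erd\H os--S\'os theorem, and deferring it to \cite{Brandt} leaves the argument with a hole exactly where the work happens. The pieces you do carry out are sound --- the algebra $\tfrac1k+\tfrac1{3k^2-k}=\tfrac3{3k-1}$, the computation that $d_{G[N(v)]}(u)\ge d(u)+d(v)-m>\tfrac{3(k-1)-4}{3(k-1)-1}\,d(v)$ so that neighborhoods inherit the hypothesis one level down, and the surjectivity-of-$\phi$ argument via every proper induced subgraph of $C_5*K_{k-2}$ being $k$-colorable --- and the inductive reduction to ``all neighborhoods are $(k-1)$-partite'' is a genuine simplification. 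But it does not by itself produce the homomorphism, and you have not indicated how the $(k-1)$-partite structure of the neighborhoods would be leveraged to build one.

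Two further inaccuracies in the sketch. First, for $k=2$ the shortest-odd-cycle double count does not ``force $\ab{C}=5$'' and then require a homomorphism onto $C_5$: with $\delta(G)>\tfrac25 m$, each vertex of the induced odd cycle $C$ has more than $\tfrac25 m-2$ neighbors off $C$ while each vertex off $C$ sends at most $2$ edges to $C$, giving $\ab C\bigl(\tfrac25 m-2\bigr)<2\bigl(m-\ab C\bigr)$, i.e.\ $\ab C<5$, which already contradicts $\ab C\ge5$; the $k=2$ case closes with no homomorphism at all, so the general-$k$ case cannot be ``the same argument relative to a fixed $K_{k-2}$.'' Second, the extremal blow-up of $C_5*K_{k-2}$ on $m$ vertices is \emph{not} the balanced one when $k\ge3$: maximizing the minimum degree forces each $K_{k-2}$-class to be three times as large as each $C_5$-class, giving $C_5$-classes of size $m/(3k-1)$, $K_{k-2}$-classes of size $3m/(3k-1)$, and minimum degree exactly $\tfrac{3k-4}{3k-1}m$ (the balanced blow-up has the strictly smaller minimum degree $\tfrac{k}{k+3}m$). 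The overall upper bound on $\delta$ across blow-ups is still what you claim, so this does not affect the contradiction, but the parenthetical is wrong and worth fixing if you flesh this out.
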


This is a stability version of Tur\'an's theorem. Indeed, Tur\'an's theorem implies that if a graph on $m$ vertices has minimum degree at least $(1-\frac 1k)m$, then it contains a copy of $K_{k+1}$, while the Andr\'asfai--Erd\H os--S\'os theorem says that as long as the minimum degree is not too far below $(1-\frac 1k)m$, every such graph must be $k$-partite. 

Before proceeding to the technical details, let us briefly sketch the proof of \cref{thm:goodness-stability}. We are given a red/blue coloring of $E(K_N)$ and we wish to prove that either the coloring is close to complete $k$-partite in red or it contains $(c,\gamma)$-many books for some $c,\gamma>0$. We begin by applying \cref{reglem} to the red graph of the coloring to obtain an equitable partition $V(K_N) = V_1 \sqcup \dotsb \sqcup V_m$, where each part $V_i$ and most pairs $(V_i,V_j)$ are $\eta$-regular for some small $\eta>0$. We now 
wish to improve our understanding of the coloring with respect to this partition.

First, we show that all the parts $V_i$ must have very low internal red density. Indeed, if some part $V_i$ has $d_R(V_i)\geq \delta$, for some fixed $\delta>0$, then the counting lemma, \cref{lem:countinglemma}, implies that $V_i$ contains many red $K_{k+1}$. By a simple averaging argument, this implies that some $k$-tuple of vertices in $V_i$ lies in many red $K_{k+1}$, yielding a red book with $\frac ck N$ pages. In fact, by using \cref{lem:markov-consequence} in place of the averaging argument, we find $(c,\gamma)$-many books if $d_R(V_i) \geq \delta$, so we may assume that $d_R(V_i)<\delta$ for all $i$.

We next build a reduced graph $G$ with vertex set $v_1,\dots,v_m$, where we make $v_i v_j$ an edge if and only if $(V_i,V_j)$ is $\eta$-regular and $d_R(V_i,V_j)\geq \delta$. We claim that every vertex of $G$ has degree at least $(1- \frac 1k - \sigma)m$ for some small $\sigma>0$. Indeed, if some vertex $v_i$ of $G$ has degree smaller than this, then we 
find that $V_i$ has very high blue density to roughly $(\frac 1k + \sigma)m$ of the remaining parts $V_j$. Since $V_i$ also has very high internal blue density, we can use this to find many blue books with spines in $V_i$ and $(\frac 1k + \gamma)N$ pages for some $0<\gamma<\sigma$. This again yields $(c,\gamma)$-many books in the coloring.

So we may assume that the graph $G$ has high minimum degree. By applying \cref{thm:andrasfai-erdos-sos}, we find that either $G$ is $k$-partite or it contains a copy of $K_{k+1}$. In the former case, we can show that the coloring itself is close to $k$-partite in red. In the latter case, this $K_{k+1}$ yields $k+1$ parts, say $V_1,\dots,V_{k+1}$, such that all pairs are $\eta$-regular and have red density at least $\delta$. By another application of the counting lemma and an averaging argument, we can then show that this structure again yields $(c,\gamma)$-many red books.

We now turn to the details of the proof. We will need the following fact about bipartite graphs, which is a simple consequence of a double-counting technique first introduced by K\H ov\'ari, S\'os, and Tur\'an \cite{KoSoTu}.

\begin{lem}\label{lem:zarankiewicz-hypergraph}
    Let $k \geq 2$ and $d \in (0,1)$ and let $\zeta = (d/4)^k$. Let $H$ be a bipartite graph with parts $A,B$, where $\ab B\geq 2k/d$, and suppose that $H$ has at least $d \ab A \ab B$ edges. Let $\h$ be a $k$-uniform hypergraph with vertex set $B$ and at least $(1-\zeta)\binom{\ab B}k$ edges. Then there are at least $\zeta \binom{\ab B}k$ edges of $\h$ such that the vertices of each such edge have at least $\zeta\ab A$ common neighbors in $A$.
\end{lem}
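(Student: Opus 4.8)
The plan is to run a K\H ov\'ari--S\'os--Tur\'an-style double count between $A$ and the family $\binom Bk$ of $k$-subsets of $B$, and only at the very end to pass from all $k$-subsets of $B$ down to the hyperedges of $\h$. Write $m=\ab B$; for $a\in A$ let $d_a=|N_H(a)|$ be its degree in $H$ (so $N_H(a)\subseteq B$), and for $S\in\binom Bk$ let $c(S)$ denote the number of common neighbors of $S$ in $A$, that is, the number of vertices of $A$ adjacent in $H$ to every vertex of $S$. Since $e(H)\ge d\ab Am$, the degrees $d_a$ average at least $dm$.

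The heart of the argument is a lower bound on $\E[c(S)]$ for a uniformly random $S\in\binom Bk$. Counting incidences $(a,S)$ with $S\subseteq N_H(a)$ in two ways gives $\E[c(S)]=\ab A\cdot\E_a\bigl[\binom{d_a}{k}/\binom{m}{k}\bigr]$, where $a$ is uniform in $A$. Using the elementary bound $\binom{d_a}{k}/\binom{m}{k}=\prod_{i=0}^{k-1}\frac{d_a-i}{m-i}\ge\bigl((d_a-k+1)_+/m\bigr)^k$, then Jensen's inequality applied to the convex function $t\mapsto t^k$ on $[0,\infty)$, and finally $(d_a-k+1)_+\ge d_a-k+1$, one obtains
\[
	\E_a\!\left[\binom{d_a}{k}\Big/\binom{m}{k}\right] \ \ge\ \left(\frac{\E_a[d_a]-k+1}{m}\right)^{k} \ \ge\ \left(\frac{dm-k}{m}\right)^{k} \ \ge\ \left(\frac{d}{2}\right)^{k},
\]
where the last step uses the hypothesis $\ab B\ge 2k/d$, i.e.\ $dm\ge 2k$, so that $dm-k\ge dm/2$. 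This is precisely the role of that hypothesis: it keeps the average $H$-degree into $B$ a constant factor above $k$, so that the ``$-k+1$'' shift only costs a bounded multiplicative factor. Consequently $\E[c(S)]\ge(d/2)^k\ab A=2^k\zeta\ab A$.

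To finish, note that $c(S)\le\ab A$ for every $S$, so a reverse Markov inequality (if $0\le X\le 1$ and $\E X\ge 2^k\zeta$ then $\pr(X\ge\zeta)\ge 2^k\zeta-\zeta\ge2\zeta$, using $k\ge2$) shows that at least $2\zeta\binom{m}{k}$ of the sets $S\in\binom Bk$ satisfy $c(S)\ge\zeta\ab A$; call these \emph{good}. Since $\h$ has at least $(1-\zeta)\binom{m}{k}$ hyperedges, at most $\zeta\binom{m}{k}$ of the $k$-subsets of $B$ fail to lie in $\h$, so at least $2\zeta\binom{m}{k}-\zeta\binom{m}{k}=\zeta\binom{m}{k}$ of the good $k$-sets are hyperedges of $\h$, which is exactly the claim. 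The one delicate point is the displayed estimate: one must verify that the crude bound on $\binom{d_a}{k}/\binom{m}{k}$ survives the averaging and still produces the constant $(d/2)^k$ even though the average degree is only just above $k$. It is to leave room for the resulting factor $2^k$ (and for the small loss in the final pruning step) that $\zeta$ is taken to be $(d/4)^k$ rather than $(d/2)^k$.
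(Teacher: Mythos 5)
Your proof is correct and follows essentially the same approach as the paper's: a K\H{o}v\'{a}ri--S\'{o}s--Tur\'{a}n double count combined with convexity to lower-bound the average number of common neighbors of a uniformly random $k$-subset of $B$, followed by a first-moment pruning step to produce the required hyperedges of $\h$ with many common neighbors. The only cosmetic differences are that the paper applies convexity directly to $x \mapsto \binom{x}{k}$ (rather than to $x^k$ after a crude product bound) and splits the double-counted sum three ways instead of invoking reverse Markov and then discarding the non-edges; the resulting numerics and the role of the hypothesis $\ab B \geq 2k/d$ are identical.
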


\begin{proof}
    For a $k$-tuple $Q \in \binom Bk$, let $\ext(Q)$ denote the number of common neighbors of $Q$ in $A$. We double-count the number of stars $K_{1,k}$ in $H$ whose central vertex is in $A$ to find that
    \[
        \sum_{Q \in \binom Bk} \ext(Q) = \sum_{a \in A} \binom{\deg(a)}{k}\geq \ab A \binom{\frac 1{\ab A}\sum_{a \in A} \deg(a)}k \geq \ab A \binom{d \ab B}k,
    \]
    where the first inequality follows from convexity. If we split the left-hand side into a sum over tuples $Q$ which are non-edges of $\h$, a sum over tuples $Q$ that are edges of $\h$ with fewer than $\zeta \ab A$ extensions, and the remainder, we find that
    \begin{align*}
        \ab A \binom {d \ab B}k &\leq \sum_{Q \notin E(\h)} \ext(Q) + \sum_{\substack{Q \in E(\h)\\\ext(Q) < \zeta \ab A}} \ext(Q) + \sum_{\substack{Q \in E(\h)\\\ext(Q) \geq \zeta \ab A}} \ext(Q)\\
        &\leq \zeta \ab A  \binom{\ab B}k + \zeta \ab A \binom {\ab B}k + \ab A \ab{\{Q \in E(\h):\ext(Q) \geq \zeta \ab A\}}.
    \end{align*}
    Therefore, the number of edges of $\h$ with at least $\zeta \ab A$ common neighbors is at least $\binom{d \ab B}k - 2\zeta \binom{\ab B}k$. We note that
    \[
        \frac{\binom{d \ab B}k}{\binom{\ab B} k} = \frac{d \ab B}{\ab B} \cdot \frac{d \ab B-1}{\ab B-1}\dotsb \frac{d \ab B - (k-1)}{\ab B-(k-1)} \geq \left(\frac d2\right)^k =2^k \zeta ,
    \]
    where we used our assumption that $\ab B \geq 2k/d$. Thus, the number of edges of $\h$ with at least $\zeta \ab A$ common neighbors in $A$ is at least
    \[
        \binom{d \ab B}k - 2\zeta \binom{\ab B}k \geq (2^k \zeta - 2\zeta) \binom {\ab B}k \geq \zeta \binom{\ab B}{k}.\qedhere
    \]
\end{proof}

With these preliminaries in place, we are now ready to prove \cref{thm:goodness-stability}.

\begin{proof}[Proof of \cref{thm:goodness-stability}]
    Fix some $k \geq 2$, $\theta\in (0,1)$, and a red/blue coloring of $E(K_N)$. Let $\sigma=(\theta/(12k^4))^k$, $M_0=k$, $\delta =\sigma^2$, and $\eta = \delta^{k^2}$. Let $M=M(\eta,M_0)$ be the parameter from \cref{reglem} and let $c=\delta^{k^2}/M^4$ and $\gamma=\delta^{2k^2}/M^{2k}$. We apply \cref{reglem} to the red graph in our coloring with parameters $M_0$ and $\eta$. This yields an equitable partition $V(K_N)=V_1 \sqcup \dotsb \sqcup V_m$ with $M_0 \leq m \leq M$ such that each part $V_i$ is $\eta$-regular in red and, for each $i$, there are at most $\eta m$ values of $j \neq i$ for which $(V_i,V_j)$ is not $\eta$-regular in red.
    
    First suppose that some part, say $V_1$, has internal red density at least $\delta$. 
    By the counting lemma, \cref{lem:countinglemma}, we see that $V_1$ contains at least $\frac{1}{(k+1)!}(\delta^{\binom{k+1}2}- \eta \binom{k+1}2)|V_1|^{k+1}$ red $K_{k+1}$. Since each red $K_{k+1}$ contains exactly $k+1$ red $K_k$, this implies that an average $k$-tuple of vertices in $V_1$ lies in at least
    \[
        \frac{\frac{k+1}{(k+1)!}(\delta^{\binom{k+1}2}- \eta \binom{k+1}2)|V_1|^{k+1}}{\binom{\ab{V_1}}k}\geq \left(\delta^{\binom{k+1}2}-\eta \binom{k+1}2\right) |V_1|\eqqcolon \xi M |V_1|
    \]
    red $K_{k+1}$. That is, if we pick a uniformly random $k$-tuple of vertices from $V_1$, then the expected number of red $K_{k+1}$ containing it is at least $\xi M \ab{V_1}$. If we also define $\kappa = (\delta^{\binom k2}- \eta \binom k2)/(k! M^k)$, then \cref{lem:countinglemma} implies that $V_1$ contains at least $\kappa N^k$ red $K_k$, with an average one having at least $\xi N$ extensions to a red $K_{k+1}$, where we use the fact that $|V_1| \geq N/M$ since the partition is equitable and has $m \leq M$ parts. If we now set $\nu=\xi/2$ and apply \cref{lem:markov-consequence}, we conclude that $V_1$ contains at least $(\xi \kappa/2) N^k$ red $K_k$, each with at least $(\xi/2)N$ extensions to a red $K_{k+1}$. By our choice of parameters,
    \[
        \xi = \frac 1M \left(\delta^{\binom{k+1}2}-\eta \binom{k+1}2\right) \geq \frac{\delta^{k^2}}{2M}
    \]
    and, therefore, $\xi/2 \geq c/k +\gamma$. Similarly, $\kappa \geq \delta^{k^2}/M^k$ and, therefore, $\xi \kappa/2 \geq \gamma$. Thus, we find that in this case the coloring contains $(c,\gamma)$-many books.
    
    Therefore, we may assume that all $V_i$ have $d_R(V_i) <\delta$. We build a reduced graph $G$ with vertex set $v_1,\ldots,v_m$ and  declare $\{v_i, v_j\} \in E(G)$ if $(V_i,V_j)$ is $\eta$-regular and  $d_R(V_i,V_j) \geq \delta$. Suppose that some vertex of $G$, say $v_1$, has degree less than $(1- \frac 1k - \sigma)m$. Since at most $\eta m$ non-neighbors of $v_1$ can come from irregular pairs, we find that $d_B(V_1,V_i) \geq 1- \delta$ for at least $(\frac 1k+\sigma-\eta)m$ choices of $i \in [m]$. Let $I \subseteq [m]$ be the set of such $i$. Since $d_B(V_1) \geq 1-\delta$ and $\delta \leq 1/k^2$, we see that, for $\alpha=k \delta/4$, 
    \[
        d_B(V_1)^{\binom k2} \geq (1- \delta)^{\binom k2} \geq 1 -\binom k2 \delta \geq \alpha.
    \]
    Moreover, we have that $\eta<\alpha^3/k^2$ by our choice of $\eta$. Therefore, we may apply \cref{lem:randomclique}, which implies that if $Q$ is a randomly chosen blue $K_k$ in $V_1$ and $u$ is some vertex in $K_N$, then $\pr(u\text{ extends }Q) \geq d(u,V_1)^k-4 \alpha$. In particular, if we sum this up over all $u \in \bigcup_{i \in I} V_i$, we find that the expected number of blue extensions of $Q$ is at least
    \[
        \sum_{i \in I} \sum_{u \in V_i} (d(u,V_1)^k - 4 \alpha) \geq |I| \frac Nm \left( (1- \delta)^k-4 \alpha\right) \geq \left( \frac 1k+\sigma -\eta\right)\left( (1- \delta)^k-4 \alpha\right) N,
    \]
    where the first inequality follows from the convexity of the function $x \mapsto x^k$. Using $\eta<\sigma/2$, we have that
    \begin{align*}
        \left( \frac 1k+\sigma -\eta\right)\left( (1- \delta)^k-4 \alpha\right) &\geq \left( \frac 1k +\frac \sigma2\right)(1-2k \delta) \geq \frac 1k +\frac \sigma 2 - 2k \delta,
    \end{align*}
    where the last step follows from the bound $1/k+\sigma \leq 1/k+1/k \leq 1$. By our choice of $\delta=\sigma^2 \leq \sigma/(8k)$, we see that the expected number of blue extensions of $Q$ is at least $(\frac 1k+\frac\sigma4)N$. Moreover, by \cref{lem:countinglemma}, the number of choices for $Q$ is at least $\frac{1}{k!} ((1-\delta)^{\binom k2}-\eta \binom k2)(N/M)^k \geq \kappa N^k$. Therefore, if we apply \cref{lem:markov-consequence} with parameters $\kappa$, $\xi=\frac 1k+\frac \sigma4$, and $\nu=\frac 1k+\gamma$, then we find that the coloring contains $(c,\gamma)$-many books.
    
    Therefore, we may assume that every vertex in $G$ has degree greater than $(1- \frac 1k -\sigma)m$, so, by \cref{thm:andrasfai-erdos-sos} and the fact that $\sigma <1/(3k^2-k)$, we see that either $G$ contains a $K_{k+1}$ or $G$ is $k$-partite. Assume first that there is a $K_{k+1}$ in $G$. By relabeling the vertices, we may assume that $v_1,\ldots,v_{k+1}$ form a clique. 
    By the counting lemma, \cref{lem:countinglemma}, we have that $V_1,\ldots,V_k$ span at least $(\delta^{\binom k2}- \eta \binom k2)(N/m)^k \geq \kappa N^k$ red $K_k$ and $V_1,\ldots,V_{k+1}$ span at least $(\delta^{\binom{k+1}2} - \eta \binom{k+1}2)(N/m)^{k+1}$ red $K_{k+1}$. Every such red $K_{k+1}$ contains exactly one red $K_k$ with one vertex in each of $V_1,\dots,V_k$, so an average $k$-tuple $(v_1,\dots,v_k) \in V_1 \times \dotsb \times V_k$ lies in at least
    \[
        \frac{(\delta^{\binom{k+1}2} - \eta \binom{k+1}2)(N/m)^{k+1}}{(N/m)^k} \geq \left(\delta^{\binom{k+1}2} - \eta \binom{k+1}2\right) \frac NM = \xi N.
    \]
    Thus, we have a set of at least $\kappa N^k$ red $K_k$ with at least $\xi N$ extensions on average and so, applying \cref{lem:markov-consequence} as before, our coloring has $(c,\gamma)$-many books.
    
    Thus, we may assume that $G$ is $k$-partite. Let this $k$-partition of $V(G)$ be $A_1\sqcup \dotsb \sqcup A_k$. Note that $\ab {A_\ell} \leq (\frac 1k+\sigma)m$ for every $\ell$, since the minimum degree of $G$ is at least $(1-\frac 1k - \sigma)m$ and each $A_\ell$ is an independent set in $G$. This in turn implies that $\ab {A_\ell} \geq (\frac 1k-k\sigma)m$ for every $\ell$, since $\ab{A_\ell} = m -\sum_{\ell' \neq \ell}\ab{A_{\ell'}} \geq (\frac 1k-k\sigma)m$. We lift this partition to a partition of the vertices of $K_N$ into $k$ parts $X_1,\dots,X_k$ by letting $X_\ell = \bigcup_{v_i \in A_\ell}V_i$, noting that our observations above imply that $\ab{X_\ell} = (\frac 1k \pm k\sigma)N$ for all $\ell$. We claim that each $X_\ell$ contains at most $\frac{3\delta}2 \binom{\ab{X_\ell}}2$ red edges. Indeed, observe that if $v_i,v_j$ are two (not necessarily distinct) vertices of $G$ that are in the same part $A_\ell$, then they must be non-adjacent in $G$. This means that either $(V_i,V_j)$ is an irregular pair or  $d_R(V_i,V_j)<\delta$. There are at most $\eta m^2$ irregular pairs, so the irregular pairs can contribute at most $\eta N^2\leq 4k^2 \eta \ab{X_\ell}^2 \leq 10 k^2 \eta \binom{\ab{X_\ell}}2$ red edges inside $X_\ell$, where we used that $\ab{X_\ell} \geq (\frac 1k-k\sigma)N \geq N/(2k)$. All other pairs of parts inside each $X_\ell$ have red density at most $\delta$, so the total number of red edges inside $X_\ell$ is at most $\delta \binom{\ab{X_\ell}}2 + 10k^2 \eta \binom{\ab{X_\ell}}2 \leq \frac{3\delta}2 \binom{\ab{X_\ell}}2$, since $\eta \leq\delta/(20k^2)$. This implies that the number of ordered pairs of (not necessarily distinct) vertices in $X_\ell$ which do not form a blue edge is at most $2 \delta \ab{X_\ell}^2$.
    
    This already implies that the red graph can be made $k$-partite by recoloring at most $2\delta N^2$ edges, so it only remains to show that by recoloring a small number of additional edges, we can make the red graph balanced complete $k$-partite. For this, suppose that $d_B(X_1,X_2) \geq \theta/k^2$. If we sample (with repetition) a random $k$-tuple $Q$ of vertices from $X_2$, then the probability that it does not form a blue clique is at most $\binom k2\cdot 2 \delta \leq k^2 \delta$, since each random pair of vertices does not span a blue edge with probability at most $2\delta$. Moreover, the expected number of common blue neighbors of $Q$ inside $X_2$ is at least $(1-2\delta)^k \ab{X_2} - k\geq (1-2k\delta)\ab{X_2}$, by convexity. 
    By applying Markov's inequality as in the proof of \cref{lem:markov-consequence}, the probability that $Q$ has fewer than $(1-\sqrt \delta)\ab{X_2}$ common blue neighbors in $X_2$ is at most $2k\sqrt \delta$. 
    Therefore, the probability that $Q$ is a blue clique with at least $(1-\sqrt \delta)\ab{X_2}$ common blue neighbors in $X_2$ is at least $1-k^2 \delta -2k\sqrt \delta \geq 1-3k\sqrt \delta$, since $\sqrt \delta \leq 1/k$. Let $\h$ be the $k$-uniform hypergraph with vertex set $X_2$ whose edges are all blue $K_k$ in $X_2$ with at least $(1-\sqrt \delta)\ab{X_2}$ common blue neighbors in $X_2$. Then $\h$ has at least $(1-3k\sqrt \delta)\binom{\ab{X_2}}k$ edges.
    
    We now apply \cref{lem:zarankiewicz-hypergraph} to the hypergraph $\h$ and to the bipartite graph of blue edges between $X_1$ and $X_2$, which has edge density $d\geq\theta/k^2$ by assumption. We have that $(d/4)^k \geq(\theta/(4k^2))^k \geq 3k\sigma = 3k\sqrt \delta$ by our choice of $\sigma$ and $\delta$, so we may indeed apply \cref{lem:zarankiewicz-hypergraph} to conclude that at least $(\theta/(4k^2))^k\binom{\ab{X_2}}k$ of the edges of $\h$ have at least $(\theta/(4k^2))^k\ab{X_1}$ common blue neighbors in $X_1$. This yields at least
    \[
        \left(\frac \theta{4k^2}\right)^k \binom{\ab{X_2}}k \geq \left(\frac{\theta\ab{X_2}}{4k^3}\right)^k \geq \left(\frac{\theta}{8k^4}\right)^k N^k \geq \gamma N^k
    \]
    blue $K_k$, each of which has at least
    \begin{align*}
        (1-\sqrt \delta)\ab {X_2} + \left(\frac \theta{4k^2}\right)^k \ab{X_1} &\geq \left(1-\sqrt \delta + \left(\frac \theta{4k^2}\right)^k\right) \left(\frac 1k -k\sigma\right)N\\
        &\geq \left(\frac 1k + \left(\frac \theta{4k^3}\right)^k -\sqrt \delta - 2k\sigma\right)N\\
        &\geq \left(\frac 1k + \left(\frac \theta{4k^3}\right)^k - 3k\sigma\right)N\\
        &\geq \left(\frac 1k+\gamma\right)N
    \end{align*}
    extensions to a blue $K_{k+1}$, where in both computations we used the fact that $\ab{X_1},\ab{X_2} \geq (\frac 1k - k\sigma)N\geq N/(2k)$, as well as our choices of $\sqrt \delta = \sigma = (\theta/(12k^4))^k$. Thus, in this case, we have again found $(c,\gamma)$-many books, a contradiction.
    
    Hence, we may assume that $d_B(X_1,X_2) <\theta/k^2$. By the same argument, all the blue densities between different parts $X_\ell$ can be assumed to be at most $\theta/k^2$. Since we have already argued that the red density inside each part is at most $2\delta$, we see that, by recoloring at most $(\binom k2\theta/k^2 + 2k\delta )N^2$ edges, we can make the red graph complete $k$-partite. Finally, we recall that each part $X_\ell$ has size $\ab{X_\ell} = (\frac 1k \pm k\sigma)N$. Therefore, by moving at most $k^2\sigma N$ arbitrary vertices into another part, we see that we can make our partition  equitable. We then recolor the edges incident with any moved vertex to obtain a balanced complete $k$-partite red graph. Doing so entails recoloring at most $k^2 \sigma N^2$ additional edges. Thus, in total, we recolor at most
    \[
        \left(\binom k2 \frac \theta{k^2} + 2k\delta + k^2 \sigma\right)N^2 \leq \left(\frac \theta 2 + 3k^2 \sigma\right)
        \leq \theta N^2
    \]
    edges, where we used that $\delta \leq \sigma$ and $\sigma \leq (\theta/(12k^4))^k \leq \theta/(6k^2)$.\qedhere
\end{proof}

\section{An upper bound matching the random bound} \label{sec:ramsey-result}

In this section, we prove \cref{thm:offdiagupper}, which says that when $c$ is not too small, the random lower bound for $r(B_{cn}\up k, B_n \up k)$ is asymptotically tight. 
To prove this theorem, we will mimic our simplified proof of the diagonal result from \cite[Section 3]{CoFoWi}, though it needs to be adapted to the off-diagonal setting in several ways. Before proceeding with the details, we sketch the proof at a high level, indicating which parts require new ideas beyond those already present in \cite{CoFoWi}.

A key notion used in the proof is that of a red-blocked configuration. Informally, a red-blocked configuration consists of $k$ disjoint vertex sets such that each set and all pairs are $\eta$-regular for some small $\eta$, every set has red density at least $\delta$ for some small $\delta$, and every pair has blue density at least $\delta$. A blue-blocked configuration is defined similarly, except with the roles of red and blue interchanged. Like the good and great configurations defined in \cite{CoFoWi}, we care about such configurations because their existence automatically implies the existence of large monochromatic books. The precise statement is given in \cref{lem:block-config-suffices}, but, roughly, it says that if we have a red/blue coloring of the complete graph on $(c^{1/k}+1)^k n +o(n)$ vertices which contains a red-blocked or a blue-blocked configuration and  $k$ is sufficiently large with respect to $c$, then the coloring contains a red $B_{cn}\up k$ or a blue $B_n\up k$. This is the key lemma which underlies the entire proof. Its proof is similar to that of \cite[Lemma~3.3]{CoFoWi}, but requires a few modifications. First, the analytic inequality which yields the result is more complicated in the off-diagonal setting and this is where the (necessary) assumption that $k$ is large with respect to $c$ comes from. Second, the averaging arguments used in the proof of \cref{lem:block-config-suffices} require a little more care than those used in the proof of \cite[Lemma~3.3]{CoFoWi}, because we must take $(p,1-p)$-weighted averages here. Finally, though in principle one needs separate arguments to deal with red-blocked configurations and blue-blocked configurations, it turns out that the same proof works for both cases, simply by interchanging the roles of red and blue and of $p$ and $1-p$. 

The remainder of the proof now comes down to finding a red-blocked or blue-blocked configuration or else finding a large monochromatic book directly. To do this, we begin by applying \cref{reglem} to the red graph of the coloring, obtaining a regular equitable partition $V(K_N) = V_1\sqcup \dotsb \sqcup V_m$. Call a part \emph{red} if it contains more red edges than blue edges and \emph{blue} otherwise. We assume for now that at least $pm$ of the parts are blue; the case where at least $(1-p)m$ of the parts are red runs similarly. We build a reduced graph $G$ whose vertices are in bijection with the blue parts and where edges represent pairs of parts that are regular and have red density at least $\delta$ for some small $\delta>0$. By defining $G$ in this way, we see that a $K_k$ in $G$ corresponds to a blue-blocked configuration in the original coloring, so it suffices to find a $K_k$ in $G$.

To do this, we first show that every vertex in $G$ must have degree at least roughly  $(1-p^{k-1})\ab{V(G)}$. Indeed, if this is not the case, since $\ab{V(G)} \geq pm$, we find that there is some blue part $V_i$ which has very high blue density to at least roughly $p^k m$ other parts. 
This can then be used to find a blue $B_n\up k$, where $n \approx p^kN$. So we may conclude that every vertex in $G$ has high degree. But, by Tur\'an's theorem, plus the fact that $p^{k-1}<1/(k-1)$ for sufficiently large $k$, this implies that $G$ contains a copy of $K_k$, as desired.

We now begin the detailed proof of \cref{thm:offdiagupper}. The following result generalizes a key analytic inequality from the diagonal case~\cite[Lemma 3.4]{CoFoWi}.

\begin{lem}\label{lem:offdiagxi}
	For every $p \in (0,1)$, there exists some $k_1 \in \N$ such that if $k \geq k_1$ and $x_1,\ldots,x_k \in [0,1]$, then
	\[
		p^{1-k} \prod_{i=1}^k x_i+\frac{(1-p)^{1-k}}{k} \sum_{i=1}^k (1-x_i)^k \geq 1.
	\]
	Moreover, one may take
	\[
	    k_1(p) = 
	    \begin{cases}
	   6 &\text{if }p \geq 1-5/(4e)\\
	    1+\frac{5 - \log \log \frac 1{1-p} + \log(-\log \log \frac 1{1-p})}{\log \frac 1{1-p}}&\text{otherwise.}
	    \end{cases}
	\]
\end{lem}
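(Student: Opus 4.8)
I would prove the statement directly, after one cosmetic relabeling. Write $q := 1-p$ for brevity; the task is to show
\[
F(x_1,\dots,x_k) := p^{1-k}\prod_{i=1}^k x_i + \frac{q^{1-k}}{k}\sum_{i=1}^k (1-x_i)^k \ \geq\ 1 \qquad\text{for all } x_1,\dots,x_k \in [0,1],
\]
and one observes immediately that equality holds at $x_1=\dots=x_k=p$, which is what pins down the eventual hypothesis on $k$. The plan is to set $k_1(p)$ to be the least integer $k\geq 2$ satisfying
\[
k\,q^{k-1}\leq \Bigl(1-\tfrac1k\Bigr)^k, \qquad\text{equivalently}\qquad \frac{q^{1-k}}{k}\Bigl(1-\tfrac1k\Bigr)^k\geq 1
\]
(call this condition $(\star)$), and then to argue by splitting on whether or not $\min_i x_i\geq 1/k$; at the end one must still check that this $k_1(p)$ is at most the expression claimed in the statement.

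In the case $\min_i x_i<1/k$ I would simply discard the nonnegative product term and keep a single summand: if $x_j<1/k$ then $(1-x_j)^k\geq (1-1/k)^k$, so $F(x)\geq \frac{q^{1-k}}{k}(1-x_j)^k\geq \frac{q^{1-k}}{k}(1-1/k)^k\geq 1$ by $(\star)$. This case is essentially free.

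The substantial case is $\min_i x_i\geq 1/k$, and this is where \cref{multjensen} is used, with $f(x)=(1-x)^k$. A direct computation gives $\frac{\mathrm d^2}{\mathrm d y^2}(1-e^y)^k=-ke^y(1-e^y)^{k-2}(1-ke^y)$, which is positive exactly when $e^y>1/k$; hence $y\mapsto f(e^y)$ is strictly convex on $(\log(1/k),0)$, so \cref{multjensen} applies (first on the open cube $(1/k,1)^k$, then on all of $[1/k,1]^k$ by continuity of both sides) and yields $\frac1k\sum_{i=1}^k (1-x_i)^k\geq (1-t)^k$ with $t:=\bigl(\prod_{i=1}^k x_i\bigr)^{1/k}\in[1/k,1]$. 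Therefore $F(x)\geq g(t)$, where $g(t):=p^{1-k}t^k+q^{1-k}(1-t)^k$. Now $g$ is strictly convex on $[0,1]$ with $g(p)=p+q=1$ and $g'(p)=0$, so $p$ is its global minimizer: if $p\geq 1/k$ then $g(t)\geq g(p)=1$, while if $p<1/k$ then $g$ is increasing on $[1/k,1]$, whence $g(t)\geq g(1/k)\geq q^{1-k}(1-1/k)^k\geq k\geq 1$, again using $(\star)$. This completes the inequality.

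The part I expect to be the main obstacle is purely computational: checking that the least $k\geq 2$ satisfying $(\star)$ is bounded above by the stated value of $k_1(p)$. Taking logarithms in $(\star)$ and using $\log(1-1/k)\geq -\frac1{k-1}$, it suffices to have $\log k+\frac{k}{k-1}\leq (k-1)\log\frac1q$; solving this for $k$ with the ansatz $k-1\approx\bigl(\log\tfrac1q\bigr)^{-1}\log\bigl(\log\tfrac1q\bigr)^{-1}$ produces precisely the stated formula, the additive constant $5$ absorbing the lower-order corrections and in particular forcing $k_1(p)\geq 6\geq 2$ for every $p\in(0,1)$. In the regime $p\geq 1-5/(4e)$ one instead verifies $(\star)$ by hand at $k=6$ and notes that in this range the left-hand side of $(\star)$ is decreasing and the right-hand side increasing in $k$, so $(\star)$ holds for all $k\geq 6$.
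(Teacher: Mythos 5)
Your proof follows essentially the same strategy as the paper's: split on whether some $x_j\leq 1/k$; handle the small case by discarding the product term and keeping one summand; reduce the large case via \cref{multjensen} (applied to $\varphi(y)=(1-e^y)^k$, strictly convex on $(\log\frac1k,0)$) to the single-variable function $\psi(w)=p^{1-k}w^k+(1-p)^{1-k}(1-w)^k$, which attains its global minimum $\psi(p)=1$ at $w=p$. The small cosmetic differences: the paper further lower-bounds $(1-1/k)^k\geq e^{-2}$ to replace your condition $(\star)$ by the cleaner $(1-p)^{1-k}/(e^2k)\geq 1$; and your extra split on $p\geq 1/k$ versus $p<1/k$ at the end is superfluous, since $\psi$ convex on $[0,1]$ with global minimum $1$ already gives $\psi(w)\geq 1$ for every $w$, wherever $w$ sits relative to $p$.

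There is, however, a genuine gap in the ``Moreover'' clause. You correctly identify what remains --- that $(\star)$ must hold for all $k\geq k_1(p)$ with the stated $k_1(p)$ --- and you correctly linearize by taking logarithms, but you then assert, rather than prove, that solving the resulting inequality ``produces precisely the stated formula, the additive constant $5$ absorbing the lower-order corrections.'' That sentence is exactly the technical content of this half of the lemma and cannot be waved away. The paper carries it out: for $p<1-5/(4e)$ it sets $\lambda=\log\frac{1}{1-p}$, substitutes to get $(1-p)^{1-k_1(p)}=e^5\log(1/\lambda)/\lambda$, reduces the claim to showing a one-variable expression $g(p)$ is at most $e^3$, and checks this by monotonicity in $p$ together with a numerical evaluation at the boundary $p=1-5/(4e)$. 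It also explicitly verifies that the relevant quantity is increasing in $k$ for $k\geq 1/\lambda$ and that $k_1(p)$ lies in this monotonicity regime --- necessary to pass from the threshold $k_1(p)$ to all larger $k$. Your proposal addresses monotonicity in $k$ only in the branch $p\geq 1-5/(4e)$; in the other branch both the monotonicity check and the substitution-plus-boundary-evaluation are missing.
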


\begin{proof}
	First suppose that $x_j \leq\frac 1k$ for some $j \in [k]$. Then we have that
	\[
		\frac{(1-p)^{1-k}}{k} \sum_{i=1}^k (1-x_i)^k \geq \frac{(1-p)^{1-k}}{k}(1-x_j)^k \geq \frac{(1-p)^{1-k}}{k} \left( 1- \frac 1k \right) ^k \geq \frac{(1-p)^{1-k}}{e^2k}\eqqcolon f(p,k),
	\]
	where we used the inequality $1-x \geq e^{-2x}$ for $x \in [0,\frac 12]$. 
	If $p \geq 1-5/(4e)$, then $1-p \leq 5/(4e)$, so $f(p,k) \geq (4/5)^{k-1} e^{k-3}/k$. Once $k \geq 6$, this last expression is at least $1$, so in the case where $p \geq 1-5/(4e)$, we may take $k_1(p)=6$.
	
	For $p<1-5/(4e)$, let $\lambda=\lambda(p) = \log \frac 1{1-p}$ and
	\[
	    k_1(p)=1+\frac{5 - \log \lambda + \log\log \frac 1\lambda}{\lambda}= 1+\frac{5 - \log \log \frac 1{1-p} + \log(-\log \log \frac 1{1-p})}{\log \frac 1{1-p}}.
	\]
	We now claim that 
	\begin{equation}\label{eq:fpk}
	    f(p,k) \geq 1\qquad\text{ if }k \geq k_1(p).
	\end{equation}
	By differentiating, we see that $f(p,k)$ is monotonically increasing in $k$ for $k \geq 1/\log \frac 1{1-p} = 1/\lambda$. Since $p<1-5/(4e)$, we have that $5-\log \lambda +\log \log \frac 1\lambda>1$ and so we are in the monotonicity regime. It therefore suffices to prove the statement for $k=k_1(p)$. Note now that
	\[
	    (1-p)^{1-k_1(p)} = (1-p)^{(5-\log \lambda + \log \log \frac 1 \lambda)/\log(1-p)} = \frac{e^5 \log \frac 1 \lambda}{\lambda}
	\]
    and let $g(p)=1 + \lambda/\log \frac1 \lambda + 5/\log \frac 1\lambda + \log \log \frac 1\lambda/\log \frac 1\lambda$.
	Then we have
	\[
	    f(p,k_1(p)) = \frac{e^5 \log \frac 1 \lambda}{\lambda} \cdot \frac 1{e^2 k_1(p)} =\frac{ e^3 \log \frac 1 \lambda }{\lambda+5+\log \frac 1 \lambda +\log \log \frac 1 \lambda} = \frac{e^3}{g(p)}.
	\]
	Thus, to prove that $f(p,k_1(p))\geq 1$, it suffices to prove that $g(p) \leq e^3$ for all $p <1-5/(4e)$. By differentiating, one can check that $g(p)$ is monotonically increasing in $p \in [0,1-5/(4e)]$. Thus, it suffices to check that $g(1-5/(4e)) \leq e^3$. But $g(1-5/(4e))\approx 18.4 < e^3$, so  $f(p,k_1(p))\geq 1$, as claimed. 
	Hence, from now on, we may assume that all the $x_i$ are in $(\frac 1k,1]$.

	For the moment, let's assume that all the $x_i$ are in $(\frac 1k,1)$. 
	Note that the function $\varphi:y \mapsto (1-e^y)^k$ is strictly convex on the interval $(\log \frac 1k,0)$. By the multiplicative Jensen inequality, \cref{multjensen}, this implies that, subject to the constraint $\prod_{i=1}^k x_i=z$, the term $\frac 1k \sum_{i=1}^k (1-x_i)^k$ is minimized when all the $x_i$ are equal to $z^{1/k}$. Therefore,
	\[
		p^{1-k} \prod_{i=1}^k x_i+\frac{(1-p)^{1-k}}{k} \sum_{i=1}^k (1-x_i)^k \geq p^{1-k}z+(1-p)^{1-k}(1-z^{1/k})^k.
	\]
	So it suffices to minimize this expression as a function of $z$. Changing variables to $w=z^{1/k}$, it suffices to minimize
	\[
		\psi(w)=p^{1-k} w^k+(1-p)^{1-k} (1-w)^k
	\]
	as a function of $w$. By differentiating, we find that $\psi$ is minimized at $w=p$, where $\psi(p)=1$. 
	This proves the desired result as long as all the $x_i$ are in $[0,1)$. By continuity, the result then extends to all $x_i \in [0,1]$. 
\end{proof}

\begin{Def}
	Fix parameters $k \in \N$ and $\eta,\delta \in (0,1)$ and suppose that we are given a red/blue coloring of $E(K_N)$. Then a $k$-tuple of pairwise disjoint vertex sets $C_1,\ldots,C_k \subseteq V(K_N)$ is called a \emph{$(k,\eta,\delta)$-red-blocked configuration} if the following properties are satisfied:
	\begin{enumerate}
		\item Each $C_i$ is $\eta$-regular with itself,
		\item Each $C_i$ has internal red density at least $\delta$, and
		\item For all $i \neq j$, the pair $(C_i,C_j)$ is $\eta$-regular and has blue density at least $\delta$.
	\end{enumerate}
	Similarly, we say that $C_1,\ldots,C_k$ is a \emph{$(k,\eta,\delta)$-blue-blocked configuration} if properties (1--3) hold, but with the roles of red and blue interchanged. 
\end{Def}

The reason we care about these configurations is that, for appropriate choices of the parameters $\eta$ and $\delta$, their existence yields the existence of the required monochromatic books.
This idea (or, rather, the version of it when red and blue play symmetric roles) already appears implicitly in the work of the first author~\cite{Conlon}, but was made much more explicit in the prequel to this paper~\cite{CoFoWi}. 
The precise statement we will need here is given by the next lemma.

\begin{lem}\label{lem:block-config-suffices}
	For every $p \in [\frac 12, 1)$, there is $k_2 \in \N$ such that the following holds. Let $k \geq k_2$, $c = ((1-p)/p)^k$, and $0<\varepsilon<\frac 12$ and suppose $0<\delta \leq (1-p) \varepsilon$ and $0<\eta\leq \delta^{4k^2}$. Suppose that the edges of $K_N$ with $N=(p^{-k}+\varepsilon)n$ are red/blue colored and this coloring contains either a $(k,\eta,\delta)$-red-blocked configuration or a $(k,\eta,\delta)$-blue-blocked configuration. Then, in either case, the coloring contains either a red $B_{cn}\up k$ or a blue $B_n \up k$.
	Moreover, one may take $k_2(p) =  k_1(1-p)$, where $k_1$ is the constant from \cref{lem:offdiagxi}.
\end{lem}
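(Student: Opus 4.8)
The plan is to prove both cases by the same template: combine a pointwise instance of \cref{lem:offdiagxi}, the extension estimate of \cref{lem:randomclique}, and an exact identity among the constants that is precisely where the relation $c=((1-p)/p)^{k}$ enters. Throughout I would write $\alpha=\delta^{\binom k2}$, noting that $\eta\le\delta^{4k^{2}}\le\alpha^{3}/k^{2}$ (so \cref{lem:randomclique} is applicable with this $\alpha$) and that $\alpha$ is tiny because $\delta\le(1-p)\varepsilon$.

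Take first a $(k,\eta,\delta)$-red-blocked configuration $C_1,\dots,C_k$. Since between-pairs are blue-dense and each $C_i$ is internally red-dense, I would let $Q$ be a uniformly random blue $K_k$ with one vertex in each $C_i$ and, for each $i$, let $R_i$ be a uniformly random red $K_k$ inside $C_i$; in both cases the relevant product of densities is at least $\delta^{\binom k2}=\alpha$, so \cref{lem:randomclique} applies — in the blue graph for $Q$, in the red graph for $R_i$. Summing its conclusion over all $N$ vertices $v$ gives
\[
\sum_v\prod_i d_B(v,C_i)\le\E[\#\text{blue extensions of }Q]+4\alpha N
\quad\text{and}\quad
\sum_v d_R(v,C_i)^k\le\E[\#\text{red extensions of }R_i]+4\alpha N .
\]
Next, for each $v$ I would apply \cref{lem:offdiagxi} \emph{with parameter $p$} to the numbers $x_i=d_B(v,C_i)\in[0,1]$; this is allowed since $k_1$ is non-increasing on $[0,1)$ and $p\ge 1-p$, so $k\ge k_2(p)=k_1(1-p)$ forces $k\ge k_1(p)$. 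This yields $p^{1-k}\prod_i d_B(v,C_i)+\frac{(1-p)^{1-k}}{k}\sum_i d_R(v,C_i)^k\ge 1$ for every $v$. Summing over $v$ and substituting the two estimates,
\[
N\le p^{1-k}\,\E[\#\text{blue extensions of }Q]+\frac{(1-p)^{1-k}}{k}\sum_i\E[\#\text{red extensions of }R_i]+4\alpha N\bigl(p^{1-k}+(1-p)^{1-k}\bigr).
\]

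I then claim that either $\E[\#\text{blue extensions of }Q]\ge n$ or $\E[\#\text{red extensions of }R_i]\ge cn$ for some $i$. Otherwise the right side above is $<p^{1-k}n+(1-p)^{1-k}cn+4\alpha N\bigl(p^{1-k}+(1-p)^{1-k}\bigr)$, which equals $p^{-k}n+4\alpha N\bigl(p^{1-k}+(1-p)^{1-k}\bigr)$ by the identity $p^{1-k}+(1-p)^{1-k}c=p^{1-k}+(1-p)p^{-k}=p^{-k}$; since $N=(p^{-k}+\varepsilon)n$, this forces $\varepsilon n<4\alpha N\bigl(p^{1-k}+(1-p)^{1-k}\bigr)$, which is false once $k\ge k_2(p)$ (using $N<2p^{-k}n$, $p^{1-k}+(1-p)^{1-k}\le 2(1-p)^{1-k}$, and $\alpha=\delta^{\binom k2}\le((1-p)\varepsilon)^{\binom k2}$). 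So the claim holds, and since the maximum of a random variable is at least its mean, either some blue transversal $Q$ has at least $n$ blue extensions — together with its spine these give a blue $B_n\up k$, as extensions of a $K_k$ lie outside it — or some $R_i$ has at least $cn$ red extensions, giving a red $B_{cn}\up k$.

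The $(k,\eta,\delta)$-blue-blocked case is the mirror image: between-pairs are red-dense and each $C_i$ internally blue-dense, so now the transversal clique is red and the in-a-part clique is blue; I would apply \cref{lem:offdiagxi} \emph{with parameter $1-p$} (which only needs $k\ge k_1(1-p)=k_2(p)$) to $x_i=d_R(v,C_i)$, and the same identity $(1-p)^{1-k}c+p^{1-k}=p^{-k}$ closes it. I expect the two genuinely non-mechanical points to be (i) seeing that the red-blocked case must use \cref{lem:offdiagxi} with parameter $p$ rather than $1-p$ and checking that $k\ge k_1(1-p)$ already gives $k\ge k_1(p)$ (via the monotonicity of $k_1$ and $p\ge\frac12$), and (ii) verifying that the constants telescope \emph{exactly} to $p^{-k}$, so the $\varepsilon n$ of headroom in $N$ is enough to absorb all the regularity error — everything else is routine.
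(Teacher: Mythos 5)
Your proof is correct and follows essentially the same approach as the paper: apply \cref{lem:offdiagxi} pointwise, sum over $v$, use \cref{lem:randomclique} to convert density products into expected extension counts, and exploit the $\varepsilon n$ of slack in $N$. The only cosmetic differences are that the paper runs the argument as an explicit case split (whichever term of the $(p,1-p)$-weighted average is $\geq N$) rather than by contradiction from a single combined inequality, and uses $\alpha=\delta^{k^2}$ rather than $\delta^{\binom k2}$; neither choice affects correctness.
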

\begin{proof}
	We tackle the two cases separately: suppose first that the coloring has a $(k,\eta,\delta)$-red-blocked configuration, say $C_1,\ldots,C_k$. By the counting lemma, \cref{lem:countinglemma}, we know that the number of blue $K_k$ with one vertex in each $C_i$ is at least
	\begin{align*}
		\left( \prod_{1 \leq i<j \leq k} d_B(C_i,C_j)-\eta \binom k2 \right) \prod_{i=1}^k |C_i|&\geq \left( \delta^{\binom k2}- \eta \binom k2 \right) \prod_{i=1}^k |C_i| >0,
	\end{align*}
	so there is at least one blue $K_k$ with one vertex in each of $C_1,\ldots,C_k$. By a similar computation, we see that each $C_i$ contains at least one red $K_k$.

	For a vertex $v$ and $i \in [k]$, let $x_i(v)\coloneqq d_B(v,C_i) \in [0,1]$. We observe that from the definition in \cref{lem:offdiagxi}, we have that $k_1(1-p) \geq k_1(p)$ for all $p \geq \frac 12$. Therefore, \cref{lem:offdiagxi} implies that since $k \geq k_2 \geq k_1(p)$, we have that
	\[
		p \left( p^{-k}\prod_{i=1}^k x_i(v) \right) +(1-p) \left( \frac{(1-p)^{-k}}{k}\sum_{i=1}^k (1-x_i(v))^k \right) \geq 1
	\]
	for all $v \in V$. Summing this fact up over all $v$, we find that
	\begin{equation}
		p \left( p^{-k}\sum_{v \in V}\prod_{i=1}^k x_i(v) \right) +(1-p) \left( \frac{(1-p)^{-k}}{k}\sum_{i=1}^k \sum_{v \in V} (1-x_i(v))^k \right) \geq N.\label{eq:weighted-avg}
	\end{equation}
	This says that a $(p,1-p)$-weighted average of two numbers is at least $N$, which means that at least one of them is at least $N$. Suppose first that the first term is at least $N$, i.e., that
	\[
		\sum_{v \in V} \prod_{i=1}^k x_i(v) \geq p^k N.
	\]
	Let $Q$ be a uniformly random blue $K_k$ spanning $C_1,\ldots,C_k$, which must exist by our computations above. Let $\alpha =\delta^{k^2} \leq \prod_{i<j} d_B(C_i,C_j)$ and observe that $\eta \leq \delta^{4k^2} = \alpha^4 \leq \alpha^3/k^2$. Thus, for any $v$, we can apply \cref{lem:randomclique} to conclude that the probability $v$ extends $Q$ to a blue $K_{k+1}$ is at least $\prod_i x_i(v)-4 \alpha$. Therefore, the expected number of extensions of $Q$ to a blue $K_{k+1}$ is at least
	\begin{align}
		\sum_{v \in V} \left( \prod_{i=1}^k x_i(v)-4 \alpha \right) &\geq (p^k-4 \alpha)N\notag\\
		&\geq (p^k-4 \alpha)(p^{-k}+\varepsilon)n\notag\\
		&\geq(1+p^k \varepsilon-8 \alpha p^{-k}) n\notag\\
		&\geq n,\label{eq:blue-computation}
	\end{align}
	where (\ref{eq:blue-computation}) uses that $\alpha=\delta^{k^2} \leq((1-p)\varepsilon)^{k^2}\leq (p \varepsilon)^{k^2} \leq p^{2k} \varepsilon/8$. 
	Therefore, $Q$ has at least $n$ extensions in expectation, so there must exist some blue $K_k$ with at least $n$ extensions, i.e., a blue $B_n \up k$.

	Now assume that the other term in the weighted average in (\ref{eq:weighted-avg}) is at least $N$, i.e., that
	\[
		\frac 1k \sum_{i=1}^k \sum_{v \in V} (1-x_i(v))^k \geq (1-p)^{k}N.
	\]
	Then there must exist some $i$ for which
	\[
		\sum_{v \in V} (1-x_i(v))^k \geq (1-p)^{k} N.
	\]
	Therefore, if $Q$ is a random red $K_k$ inside this $C_i$, then, by \cref{lem:randomclique}, the expected number of extensions of $Q$ is at least\footnote{Strictly speaking, if $v \in C_i$, then $d_R(v,C_i) \neq 1-x_i(v)$,
	as $v$ has no edge to itself. However, this tiny loss can be absorbed into the error terms 
	and the result does not change.} 
	\begin{align}
		\sum_{v \in V} \left[ (1-x_i(v))^k-4 \alpha \right] &\geq \left[(1-p)^k-4 \alpha\right](p^{-k}+\varepsilon)n\notag \\
		&\geq \left(\left( \frac{1-p}{p} \right) ^k+(1-p)^k \varepsilon -8 \alpha p^{-k}\right)n\notag \\
		&\geq cn,\label{eq:red-computation}
	\end{align}
	where we use the fact that $c=((1-p)/p)^k$ and that
	\[
		\alpha=\delta^{k^2}=((1-p) \varepsilon)^{k^2} \leq p^k (1-p)^k \varepsilon/8,
	\]
	since $1-p \leq p$. 
	Thus, the expected number of red extensions of a red $K_k$ in $C_i$ is at least $cn$, so there must exist a red $B_{cn}\up k$. This concludes the proof under the assumption that the coloring contains a $(k,\eta,\delta)$-red-blocked configuration.

	Now, we instead assume that the coloring contains a $(k,\eta,\delta)$-blue-blocked configuration and aim to conclude the same result; the proof is more or less identical, but with the role of $p$ now played by $q=1-p$. As before, we find that there is at least one red $K_k$ spanning $C_1,\ldots,C_k$ and that each $C_i$ contains at least one blue $K_k$. For a vertex $v$ and $i \in [k]$, let $y_i(v)=d_R(v,C_i) \in [0,1]$ and write $q=1-p$. Since $k \geq k_2 = k_1(q)$, we can sum the result of applying \cref{lem:offdiagxi} over all $v \in V$ to find that
	\[
		q \left( q^{-k} \sum_{v \in V} \prod_{i=1}^k y_i(v) \right) +(1-q) \left( \frac{(1-q)^{-k}}{k}\sum_{i=1}^k \sum_{v \in V} (1-y_i(v))^k \right) \geq N.
	\]
	As before, this is a $(q,1-q)$-weighted average of two terms, which means that one of the terms must be at least $N$. Suppose first that the first term is at least $N$, i.e., that
	\[
		\sum_{v \in V} \prod_{i=1}^k y_i(v) \geq q^k N.
	\]
    If $Q$ is a uniformly random red $K_k$ spanning $C_1,\ldots,C_k$ and  $\alpha=\delta^{k^2}$, then, as before, we find that the expected number of extensions of $Q$ to a red $K_{k+1}$ is at least
	\[
		\sum_{v \in V} \left( \prod_{i=1}^k y_i(v)-4 \alpha \right) \geq (q^k-4 \alpha) N \geq ((1-p)^k- 4 \alpha)(p^{-k}+\varepsilon)n
		\geq cn,
	\]
	by the computation in (\ref{eq:red-computation}). Therefore, in this case, there must exist some red $K_k$ with at least $cn$ red extensions, giving the desired red $B_{cn}\up k$. So we may assume instead that
	\[
		\frac 1k \sum_{i=1}^k \sum_{v \in V} (1-y_i(v))^k \geq (1-q)^kN,
	\]
	which implies that for some $i \in [k]$,
	\[
		\sum_{v \in V} (1-y_i(v))^k \geq p^k N.
	\]
	Thus, if $Q$ is a random blue $K_k$ inside this $C_i$, we find that the expected number of blue extensions of $Q$ is at least
	\begin{align*}
		\sum_{v \in V} \left[ (1-y_i(v))^k-4\alpha \right] \geq (p^k-4 \alpha)N\geq (p^k-4 \alpha)(p^{-k}+\varepsilon)n \geq n,
	\end{align*}
	by the same computation as in (\ref{eq:blue-computation}). This gives us our blue $B_n \up k$, completing the proof. 
\end{proof}

With this result in hand, we can now prove \cref{thm:offdiagupper}.
\begin{proof}[Proof of \cref{thm:offdiagupper}]
    Given an integer $k \geq 2$, let $c_1(k)$ be the infimum of $c \in (0,1]$ such that $k_2((c^{1/k}+1)\inv) \leq k$, where $k_2$ is the constant from \cref{lem:block-config-suffices}. Note that we declare this infimum to equal $1$ if no $c \in (0,1]$ satisfies this condition (as happens for $k=2$). In this case, there is nothing to prove, since \cref{thm:offdiagupper} for $c=1$ is already known~\cite{Conlon}. We now fix $c\in [c_1,1]$ and $p = 1/(c^{1/k}+1) \in (\frac 12,1]$, noting that we have $k \geq k_2(p)$.
	
	Fix $0<\varepsilon<\frac 12$ and suppose we are given a red/blue coloring of $E(K_N)$ where $N=(p^{-k}+\varepsilon)n$. Our goal is to prove that if $n$ is sufficiently large in terms of $\varepsilon$, then this coloring contains a red $B_{cn} \up k$ or a blue $B_n \up k$. To do this, we fix parameters $\delta=(1-p)^{2k}\varepsilon/(4k)$ and $\eta=\min\{\delta^{4k^2},(1-p)/(4k)\}$ depending on $c$, $k$, and $\varepsilon$. 

	We apply \cref{reglem} to the red graph from our coloring with parameters $\eta$ and $M_0=1/\eta$ to obtain an equitable partition $V(K_N)=V_1 \sqcup \dotsb \sqcup V_m$, where each $V_i$ is $\eta$-regular and, for each $i$, there are at most $\eta m$ values $1 \leq j \leq m$ such that the pair $(V_i,V_j)$ is not $\eta$-regular. Moreover, $M_0 \leq m \leq M=M(\eta,M_0)$. Note that since the colors are complementary, the same properties also hold for the blue graph. Call a part $V_i$ \emph{blue} if $d_B(V_i) \geq \frac 12$ and \emph{red} otherwise.

	Suppose first that at least $m' \geq pm$ of the parts are blue and rename the parts so that $V_1,\ldots,V_{m'}$ are these blue parts. We build a reduced graph $G$ whose vertex set is $v_1,\ldots,v_{m'}$ by making $\{v_i,v_j\}$ an edge if and only if $(V_i,V_j)$ is $\eta$-regular and $d_R(V_i,V_j) \geq \delta$. Suppose that some vertex in $G$, say $v_1$, has degree at most $(1-p^{k-1}-\eta/p)m'-1$. 
	Since $v_1$ has at most $\eta m \leq \eta m'/p$ non-neighbors coming from irregular pairs $(V_1,V_j)$, this means that there are at least $p^{k-1} m'$ parts $V_j$ such that $(V_1,V_j)$ is $\eta$-regular and  $d_B(V_1,V_j) \geq 1- \delta$. Let $J$ be the set of all these indices $j$ and  $U=\bigcup_{j \in J}V_j$ be the union of all of these $V_j$. We then have
	\begin{equation}\label{eq:density-lb}
		e_B(V_1,U)=\sum_{j \in J} e_B(V_1,V_j) \geq \sum_{j \in J} (1- \delta) |V_1||V_j|=(1- \delta) |V_1||U|.
	\end{equation}
	Let $V_1' \subseteq V_1$ denote the set of vertices $v \in V_1$ with $e_B(v,U) \geq (1-2 \delta)|U|$. Then we may write
	\begin{equation}\label{eq:density-ub}
		e_B(V_1,U)=\sum_{v \in V_1'} e_B(v,U)+\sum_{v \in V_1 \setminus V_1'} e_B(v,U) \leq |V_1'||U|+(1-2 \delta)|V_1 \setminus V_1'| |U|.
	\end{equation}
	Combining inequalities (\ref{eq:density-lb}) and (\ref{eq:density-ub}), we find that $|V_1'| \geq \frac 12 |V_1|$, where every vertex in $V_1'$ has blue density
	at least $1-2 \delta$ into $U$. Moreover, since $\eta<\frac 16$, we may apply the $\eta$-regularity of $V_1$ to conclude that the internal blue density of $V_1'$ is at least $\frac 12- \eta \geq \frac 13$, while the hereditary property of regularity implies that $V_1'$ is $2\eta$-regular. Then the counting lemma, \cref{lem:countinglemma}, implies that $V_1'$ contains at least
	\[
		\frac{1}{k!}\left( d_B(V_1')^{\binom k2}-2\eta \binom k2 \right) |V_1'|^k \geq \frac{1}{k!}\left( 3^{-\binom k2}-2\eta \binom k2 \right) |V_1'|^k >0
	\]
	blue $K_k$, so that $V_1'$ contains at least one blue $K_k$. Every vertex of this blue $K_k$ has at least $(1-2 \delta)|U|$ blue neighbors in $U$, so the blue $K_k$ has at least $(1-2 k \delta)|U|$ blue extensions into $U$. Moreover, since we assumed that $|J| \geq p^{k-1} m' \geq p^{k} m$ and the partition is equitable, we find that $|U| \geq p^{k}N$. Therefore,
	\begin{align*}
		(1-2 k \delta)|U| &\geq (1-2 k \delta)p^k (p^{-k}+\varepsilon)n\\
		&= (1-2 k \delta)(1+p^k \varepsilon)n\\
		&\geq (1+p^k \varepsilon -4 k \delta)n\\
		&\geq n,
	\end{align*}
	since our choice of $\delta$ yields $4k \delta = (1-p)^{2k} \varepsilon \leq p^k \varepsilon$. Thus, we find that any blue $K_k$ inside $V_1'$ must have at least $n$ blue extensions, giving us our blue $B_n \up k$. 

	So we may assume that every vertex in $G$ has degree at least $(1-p^{k-1}-\eta/p)m'$.
	Recall from \eqref{eq:fpk} that $f(1-p,k) = p^{1-k}/(e^2k) \geq 1$ for $k \geq k_1(1-p)$. Since we assume that $k \geq k_2(p) = k_1(1-p)$, this implies that 
	\begin{equation}\label{eq:3k}
		p^{k-1} \leq \frac{1}{e^2k}\leq \frac 1{3(k-1)}.
	\end{equation}
	Additionally, by our choice of $\eta \leq (1-p)/(4k) \leq p/(4k)$, we know that
	\[
		\frac \eta p \leq \frac{1}{3(k-1)}.
	\]
	The previous two inequalities imply that
	\[
		1-p^{k-1}-\frac \eta p >1- \frac{1}{k-1},
	\]
	so that $G$ contains a $K_k$ by Tur\'an's theorem. Let $v_{i_1},\ldots,v_{i_k}$ be the vertices of this $K_k$ and let $C_j=V_{i_j}$ for $1 \leq j \leq k$. Then we claim that $C_1,\ldots,C_k$ is a $(k,\eta,\delta)$-blue-blocked configuration. The fact that each $C_i$ is $\eta$-regular follows immediately from our application of \cref{reglem} and the fact that $d_B(C_i) \geq \delta$ follows from the fact that we assumed $d_B(C_i) \geq \frac 12$. Finally, the definition of edges in $G$ implies that $(C_i,C_j)$ is $\eta$-regular with $d_R(C_i,C_j) \geq \delta$ for all $i \neq j$. Thus, our coloring contains a $(k,\eta,\delta)$-blue-blocked configuration with $\delta \leq (1-p) \varepsilon$ and $\eta \leq \delta^{4k^2}$, so \cref{lem:block-config-suffices} implies that the coloring contains either a red $B_{cn} \up k$ or a blue $B_n \up k$.

	We have now finished the proof if at least $pm$ of the parts $V_i$ are blue. Therefore, we may assume instead that at least $m'' \geq (1-p) m$ of the parts are red and again rename the parts so that these red parts are $V_1,\ldots,V_{m''}$. We construct a reduced graph $G$ on vertices $v_1,\ldots,v_{m''}$ by connecting $v_i$ to $v_j$ if $(V_i,V_j)$ is $\eta$-regular with $d_B(V_i,V_j) \geq \delta$. Suppose that some vertex in $G$, say $v_1$, has degree at most $(1-(1-p)^{k-1}-\eta/(1-p))m''-1$. As before, $v_1$ has at most $\eta m \leq \eta m''/(1-p)$ non-neighbors coming from irregular pairs. Thus, if we let $J$ denote the set of indices $j$ for which $(V_1,V_j)$ is $\eta$-regular with $d_R(V_1,V_j) \geq 1- \delta$, then we find that $|J| \geq (1-p)^{k-1}m'' \geq (1-p)^k m$. Thus, if $U=\bigcup_{j \in J}V_j$, then we see that $|U| \geq (1-p)^k N$, since the partition is equitable. Next, as above, we let $V_1' \subseteq V_1$ denote the set of vertices $v \in V_1$ with $e_R(v,U) \geq (1-2 \delta)|U|$ and find that $|V_1'| \geq \frac 12 |V_1|$. Therefore, as above, we know that $V_1'$ contains at least one red $K_k$ and this red $K_k$ has at least $(1-2 k \delta)|U|$ red extensions in $U$. Moreover,
	\begin{align}
		(1-2k \delta)|U|&\geq (1-2k \delta) (1-p)^k N\notag\\
		&=(1-2 k \delta)(1-p)^k (p^{-k}+\varepsilon)n \notag\\
		&=(1-2 k \delta)(c+(1-p)^k \varepsilon)n\label{eq:value-of-c}\\
		&\geq (c+(1-p)^k \varepsilon-4 k \delta)n\notag\\
		&\geq cn\label{eq:bounding-delta},
	\end{align}
	where in (\ref{eq:value-of-c}) we used the definition of $p$, which implies that $((1-p)/p)^k=c$, and in (\ref{eq:bounding-delta}) we used our choice of $\delta$ to see that $\delta \leq (1-p)^k \varepsilon/(4k)$. Thus, in this case, we can find a red $B_{cn}\up k$. 

	We may therefore assume that every vertex in $G$ has degree at least $(1- (1-p)^{k-1}-\eta/(1-p))m''$. 
	As before, we know that, since $k \geq k_2(p)$,
	\[
		(1-p)^{k-1} \leq p^{k-1} \leq \frac{1}{3(k-1)}
	\]
	and our choice of $\eta \leq (1-p)/(4k)$ implies that
	\[
		\frac{\eta}{1-p} \leq \frac{1}{3(k-1)}.
	\]
	Thus, by Tur\'an's theorem, $G$ must contain a $K_k$, with vertices $v_{i_1},\ldots,v_{i_k}$. If we let $C_j=V_{i_j}$, then $C_1,\ldots,C_k$ will be a $(k,\eta,\delta)$-red-blocked configuration, by the definition of edges in $G$ and the assumption that $N$ is sufficiently large in terms of $\varepsilon$. Thus, by \cref{lem:block-config-suffices}, we can again conclude that the coloring contains either a red $B_{cn}\up k$ or a blue $B_n \up k$. 
	
	To finish, we note that, as claimed, we may take $c_1(k) \leq ((1+o(1))\frac{\log k}{k})^k$. Indeed, for any $c$ and $k$, let $p(c,k) = (c^{1/k}+1)\inv$ and $y = y(c,k) = 1/\log[1/p(c,k)]$. Then, from \cref{lem:block-config-suffices,lem:offdiagxi}, we see that $k_2(p(c,k)) = 1+y(5+\log y+\log \log y)$. Thus, if $y\leq (1+o(1))k/\log k$, then $k \geq k_2(p(c,k))$. Since $y = 1/\log(1+c^{1/k})$, this condition is equivalent to $c^{1/k} \geq \exp((1+o(1))\frac{\log k}{k})-1= (1+o(1))\frac{\log k}{k}$, which yields the desired bound.
\end{proof}

\section{Quasirandomness} \label{sec:quasirandomness}

In the previous section, we showed that for a certain range of $c$ and $k$, the Ramsey number $r(B_{cn}\up k,B_n\up k)$ is, as $n \to \infty$, asymptotically equal to the lower bound coming from a $p$-random construction. In this section, we strengthen this result, showing that all colorings whose number of vertices is close to the Ramsey number must either be quasirandom or else contain substantially larger books than the Ramsey property implies. We make the following definition.
\begin{Def}
For $p \in [\frac 12,1)$ and $\gamma>0$, we say that a red/blue coloring of $E(K_N)$ contains \emph{$(p,\gamma)$-many books} if it contains
\begin{itemize}
    \item at least $\gamma N^k$ red $K_k$, each with at least $((1-p)^k +\gamma)N$ extensions to a red $K_{k+1}$, or
    \item at least $\gamma N^k$ blue $K_k$, each with at least $(p^k +\gamma)N$ extensions to a blue $K_{k+1}$.
\end{itemize}
\end{Def}

Here is the restatement of \cref{thm:quasirandom} in terms of $(p, \gamma)$-many books that we will prove.
\begin{thm:quasirandom}
	For every $p \in [\frac 12,1)$, there exists some $k_0 \in \N$ such that the following holds for every $k \geq k_0$. For every $\theta>0$, there exists some $\gamma>0$ such that if a red/blue coloring of $E(K_N)$ is not $(p,\theta)$-quasirandom, then it contains $(p,\gamma)$-many books.
\end{thm:quasirandom}

To prove \cref{thm:quasirandom}, we will need a few technical lemmas. At a high level, the proof closely follows the proof of the main quasirandomness theorem in \cite[Section 5]{CoFoWi}, as follows. First, we prove a strengthening of \cref{lem:offdiagxi}, which can be thought of as a stability version of that result; it says that if our vector $(x_1,\ldots,x_k)$ is bounded in $\ell_\infty$ away from the minimizing point $(p,\ldots,p)$, then the value of the function in \cref{lem:offdiagxi} is bounded away from its minimum of $1$. Using this, we can strengthen \cref{lem:block-config-suffices} to say that not only does a blocked configuration imply the existence of the desired monochromatic book, but in fact it implies the existence of a larger book unless every part of the blocked configuration is $\varepsilon$-regular to the entire vertex set. Therefore, assuming our coloring does not contain many blue $B\up k_{(p^k+\gamma)N}$ or red $B\up k_{((1-p)^k+\gamma)N}$, we will be able to repeatedly pull out vertex subsets that are $\varepsilon$-regular to the entire vertex set until we have almost partitioned all the vertices into such subsets. At that point, we can use the structure coming from this partition to deduce that the coloring is $(p,\theta)$-quasirandom, as desired.

We begin with the strengthening of \cref{lem:offdiagxi}.

\begin{lem}\label{lem:stable-offdiag-xi}
	For $p \in (0,1)$, let $k_1=k_1(p)$ be as in \cref{lem:offdiagxi}. Then, for every integer $k \geq k_1$ and any $\varepsilon_0>0$, there exists some $\delta_0>0$ such that if $x_1,\ldots,x_k \in [0,1]$ are numbers with $|x_j-p| \geq \varepsilon_0$ for some $j$, then
	\[
		p^{1-k} \prod_{i=1}^k x_i+\frac{(1-p)^{1-k}}{k}\sum_{i=1}^k (1-x_i)^k \geq 1+\delta_0.
	\]
\end{lem}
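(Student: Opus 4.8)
The plan is to upgrade \cref{lem:offdiagxi} to a stability statement by a compactness argument together with a description of the equality cases. Write
\[
	F(x_1,\dots,x_k) = p^{1-k}\prod_{i=1}^k x_i + \frac{(1-p)^{1-k}}{k}\sum_{i=1}^k (1-x_i)^k,
\]
which is continuous on the compact cube $[0,1]^k$ and, by \cref{lem:offdiagxi}, satisfies $F \ge 1$ everywhere. Fix $\varepsilon_0 > 0$ and let $S = \{x \in [0,1]^k : |x_j - p| \ge \varepsilon_0 \text{ for some } j\}$; this set is closed, hence compact (if it is empty the lemma is vacuous, so assume not). Thus $F$ attains a minimum on $S$, and it suffices to show this minimum is strictly larger than $1$, since then any positive number below $\min_S F - 1$ serves as $\delta_0$. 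Because $\varepsilon_0 > 0$, the point $(p,\dots,p)$ does not lie in $S$, so it is enough to prove that $(p,\dots,p)$ is the \emph{only} point of $[0,1]^k$ at which $F$ equals $1$.

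Suppose then that $F(x^*) = 1$; I will show $x^* = (p,\dots,p)$. The first step is to rule out coordinates near the boundary of $[0,1]$. If $x^*_j \le \tfrac1k$ for some $j$, then keeping only one nonnegative summand gives
\[
	F(x^*) \ge \frac{(1-p)^{1-k}}{k}(1-x^*_j)^k \ge \frac{(1-p)^{1-k}}{k}\Bigl(1-\tfrac1k\Bigr)^k > \frac{(1-p)^{1-k}}{e^2 k} = f(p,k) \ge 1,
\]
where the strict inequality is $(1-\tfrac1k)^k > e^{-2}$ (equivalently $1-\tfrac1k > e^{-2/k}$, which holds for all $k \ge 2$) and $f(p,k) \ge 1$ is exactly the bound established inside the proof of \cref{lem:offdiagxi}; this contradicts $F(x^*) = 1$. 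Next, if $x^*_j = 1$ for some $j$, say $j=1$, consider $g(s) = F(1-s, x^*_2,\dots,x^*_k)$ for small $s \ge 0$; a direct differentiation gives $g'(0) = -p^{1-k}\prod_{i\ge 2}x^*_i$, which is strictly negative since we have just shown every coordinate exceeds $\tfrac1k$. Hence $F$ would be strictly below $1$ at a nearby point of $[0,1]^k$, again contradicting \cref{lem:offdiagxi}. So $x^* \in (\tfrac1k,1)^k$.

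On this open cube I run the optimization from the proof of \cref{lem:offdiagxi}, now tracking when equality holds. With $z = \prod_i x^*_i$ and $w = z^{1/k} \in (\tfrac1k,1)$, the multiplicative Jensen inequality (\cref{multjensen}), applied to the strictly convex function $y \mapsto (1-e^y)^k$ on $(\log\tfrac1k, 0)$, yields $\tfrac1k\sum_i(1-x^*_i)^k \ge (1-w)^k$, with equality if and only if all the $x^*_i$ coincide; therefore $F(x^*) \ge \psi(w)$, where $\psi(w) = p^{1-k}w^k + (1-p)^{1-k}(1-w)^k$. A routine computation shows that $\psi$ is strictly convex on $(0,1)$ with its unique minimum at $w = p$, where $\psi(p) = 1$. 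Consequently $1 = F(x^*) \ge \psi(w) \ge 1$ forces $w = p$ and forces equality in the Jensen step, whence $x^*_1 = \dots = x^*_k = w = p$, as desired.

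I do not expect a genuine obstacle: the statement is essentially the strict form of an inequality already proved, and $(p,\dots,p)$ being the unique minimizer of $F$ is the strict-convexity content lurking in \cref{lem:offdiagxi}. The only mild care needed is in the boundary analysis — a coordinate equal to $0$ or $1$, or at most $\tfrac1k$ — precisely because the proof of \cref{lem:offdiagxi} passes to the interior by continuity, so one cannot read off the equality cases directly; isolating those cases first (as above) and confirming that the inequalities $(1-\tfrac1k)^k > e^{-2}$, $f(p,k)\ge1$, and strict convexity of $\psi$ are genuinely strict where used is the whole of the work.
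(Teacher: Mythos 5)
Your proof is correct, and it is organized somewhat differently from the paper's. You first prove that $(p,\dots,p)$ is the \emph{unique} zero of $F-1$ on $[0,1]^k$ (ruling out coordinates $\le 1/k$ and $=1$ by direct estimates and a derivative computation, then settling the open interior via strict equality analysis in the multiplicative Jensen step together with the unique minimum of $\psi$ at $w=p$), and then invoke compactness of the set $S=\{x:\exists j,\,|x_j-p|\ge \varepsilon_0\}$ once, at the very end, to extract $\delta_0$. The paper's proof uses the same boundary arguments (the small-coordinate case matching the $f(p,k)\ge 1$ bound from \cref{lem:offdiagxi}, and the same partial-derivative trick at $x_j=1$ with a compactness/continuity step), but then, in the interior, tracks explicit parameters $\varepsilon_1,\dots,\varepsilon_4$ and invokes \cref{holderdefect} (H\"older's defect formula) with an explicit lower bound on $\varphi''$ to convert the assumption $|x_j-p|\ge\varepsilon_0$ into a quantitative surplus. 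What your version buys is a cleaner argument with fewer moving parts; what the paper's buys is a more explicit, effective lower bound for $\delta_0$, though it never records an explicit formula anyway. One small point worth noting: you use that equality in the multiplicative Jensen step forces all $x_i^*$ equal; \cref{multjensen} as stated only asserts that the minimum is attained at the equal point, so you are reading off the uniqueness of that minimizer from the strict convexity hypothesis rather than from the literal statement. That is easy to justify, but should be said if you are being scrupulous.
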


\begin{proof}
	Let
	\[
		F(x_1,\ldots,x_k)=p^{1-k} \prod_{i=1}^k x_i+\frac{(1-p)^{1-k}}{k}\sum_{i=1}^k (1-x_i)^k
	\]
	and $\varphi(y)=(1-e^y)^k$. The goal is to apply H\"older's defect formula, \cref{holderdefect}, using the strict convexity of the function $\varphi$. However, $\varphi$ is only strictly convex on the interval $(\log \frac 1k,0)$ and, in order to apply \cref{holderdefect}, we in fact need a positive lower bound on $\varphi''$, but no such bound exists for the whole interval $(\log \frac 1k,0)$. Because of this, we need to separately analyze the cases where all the variables are inside a large subinterval of $(\frac 1k,1)$ and when one of them is outside such a subinterval.

	First, suppose that one of the variables, say $x_1$, is in the interval $[0,\frac {1+\varepsilon_1}k]$, for some small constant $\varepsilon_1>0$. Then we have that 
	\begin{align*}
		F(x_1,\ldots,x_k) &\geq \frac{(1-p)^{1-k}}{k} \left( 1-x_1 \right) ^k\geq \frac{(1-p)^{1-k}}{k} \left( 1-\frac {1+\varepsilon_1}k \right) ^k. 
	\end{align*}
	From the proof of \cref{lem:offdiagxi}, we see that this quantity is strictly larger than $1$ for all $k \geq k_1(p)$, so, by choosing $\delta_0$ appropriately, we see that $F(x_1,\dots,x_k) \geq 1+\delta_0$ in this case. 
    We may therefore assume from now on that all the variables are at least $\frac {1+\varepsilon_1}k$.

	Next, suppose that there exist values $x_1,\ldots,x_{k-1} \in [\frac {1+\varepsilon_1}k,1]$ such that $F(x_1,\ldots,x_{k-1},1)=1$. We observe that
	\begin{align*}
		\left. \pardiff F{x_k} \right|_{x_k=1}=\left[ p^{1-k} \prod_{i=1}^{k-1}x_i-(1-p)^{1-k}(1-x_k)^{k-1} \right] _{x_k=1}=p^{1-k} \prod_{i=1}^{k-1} x_i >0.
	\end{align*}
	This implies that if we move from $x_k=1$ to $x_k=1- \varepsilon_2$ for some sufficiently small $\varepsilon_2$, the value of $F$ will decrease. Therefore, there will exist a vector $(x_1,\ldots,x_k)$ for which $F(x_1,\ldots,x_k)<1$, contradicting \cref{lem:offdiagxi} as long as $k \geq k_1(p)$. Thus, for every choice of $x_1,\ldots,x_{k-1} \in [\frac {1+\varepsilon_1}k,1]$, we have that $F(x_1,\ldots,x_{k-1},1)>1$. Since the space $[\frac {1+\varepsilon_1}k,1]^{k-1} \times \{1\}$ is compact, we in fact find that $F(x_1,\ldots,x_{k-1},1) \geq 1+\delta_1'$ for all $x_1,\ldots,x_{k-1} \in [\frac {1+\varepsilon_1}k,1]$, for some sufficiently small $\delta_1'$ depending on $p$ and $k$. Finally, by continuity of $F$, we have that $F(x_1,\ldots,x_k) \geq 1+\delta_1$ whenever $x_k \geq 1- \varepsilon_2$ for some other $\delta_1,\varepsilon_2>0$. Since $F$ is a symmetric function of its variables, the same conclusion holds if $x_i \geq 1- \varepsilon_2$ for any $i$. Thus, as long as we take the $\delta_0$ in the lemma statement to be smaller than $\delta_1$, we can assume from now on that $x_i \in [\frac {1+\varepsilon_1}k, 1- \varepsilon_2]$ for all $i$. 

    By \cref{multjensen}, subject to the constraint $\prod_{i=1}^k x_i = z$, the term $\frac 1k \sum_{i=1}^k (1-x_i)^k$ is minimized when $x_i= z^{1/k}$ for all $i$. As in the proof of \cref{lem:offdiagxi}, this shows that 
    $F(x_1,\dots,x_k) \geq \psi(z^{1/k})$,
    where $\psi(w) = p^{1-k}w^k + (1-p)^{1-k}(1-w)^k$. The function $\psi$ has a global minimum at $w=p$, where its value is $1$. This shows that $F(x_1,\dots,x_k)\geq 1+\delta_0$ if $\ab{z^{1/k}-p}\geq \varepsilon_3$, for some $\varepsilon_3>0$ depending on $p,k$, and $\delta_0$. Moreover, by picking $\delta_0$ sufficiently small, we can make $\varepsilon_3$ as small as we wish. Therefore, we may now assume that $z^{1/k} = p \pm \varepsilon_3$, which implies that $\log(z^{1/k}) = (\log p) \pm \varepsilon_4$ for some $\varepsilon_4>0$, which can also be made arbitrarily small by picking $\delta_0$ appropriately.
    
    We are now ready to apply H\"older's defect formula. First, we observe that for $y \in [\log\frac{1+\varepsilon_1}k,\log(1-\varepsilon_2)]$, we have
    \[
        \varphi''(y) = k e^y (1 - e^y)^{k - 2} (k e^y - 1)\geq k \cdot \frac{1+\varepsilon_1}k \cdot \varepsilon_2^{k-2} \cdot \varepsilon_1 \eqqcolon m,
    \]
    where $m$ is a fixed, strictly positive constant. Let $y_i = \log x_i$ for $1 \leq i \leq k$, so that $\frac 1k \sum_{i=1}^k y_i = \log(z^{1/k})$. We assumed that $\ab{x_j-p}\geq \varepsilon_0$ for some $j$, which implies that $\ab{y_j-\log p} \geq \varepsilon_0$ as well, since the derivative of $\log x$ is bounded below by $1$ on the interval $(0,1)$. Therefore, choosing $\delta_0$ small enough that $\varepsilon_4<\varepsilon_0$, we see that
    \[
        \frac 1k \sum_{i=1}^k (y_i-\log(z^{1/k}))^2 \geq \frac 1k (y_j-\log(z^{1/k}))^2 \geq \frac 1k (\varepsilon_0-\varepsilon_4)^2,
    \]
    since $\log(z^{1/k}) = (\log p)\pm \varepsilon_4$ and $\ab{y_j-\log p}\geq \varepsilon_0$. Hence, by \cref{holderdefect}, we have that
    \begin{align*}
        F(x_1,\dots,x_k) &= p^{1-k} z +\frac{(1-p)^{1-k}}k \sum_{i=1}^k (1-x_i)^k\\
        &= p^{1-k}z + (1-p)^{1-k}\cdot \frac 1k \sum_{i=1}^k \varphi(y_i) \\
        &\geq p^{1-k}z + \varphi(\log(z^{1/k}))+ \frac m{2k}(\varepsilon_0-\varepsilon_4)^2\\
        &=\psi(z^{1/k}) + \frac m{2k} (\varepsilon_0-\varepsilon_4)^2\\
        &\geq 1+\delta_0,
    \end{align*}
    where we use the fact that $\psi(w)\geq 1$ for all $w \in [0,1]$ and take $\delta_0$ sufficiently small.
\end{proof}

Using \cref{lem:stable-offdiag-xi}, we can now prove the following strengthening of \cref{lem:block-config-suffices}, which says that if we have a blocked configuration $C_1,\ldots,C_k$ and many vertices whose blue density into $C_i$ is far from $p$, then we can find a substantially larger monochromatic book than what is guaranteed by \cref{lem:block-config-suffices}.
\begin{lem}\label{lem:density-p}
	Fix $p \in [\frac 12,1)$ and let $k \geq k_2(p)$, where $k_2$ is the constant from \cref{lem:block-config-suffices}.
	Suppose $0<\varepsilon_0<\frac 14$ and let $\delta_0=\delta_0(\varepsilon_0)$ be the parameter from \cref{lem:stable-offdiag-xi}. Let $0<\delta\leq (1-p)\delta_0 \varepsilon_0$ and $0<\eta\leq \delta^{4k^2}$ and suppose that $C_1,\ldots,C_k$ is either a $(k,\eta,\delta)$-red-blocked configuration or a $(k,\eta,\delta)$-blue-blocked configuration in a red/blue coloring of $K_N$. Define
	\[
		B_i=\{v\in K_N: |d_B(v,C_i)-p| \geq \varepsilon_0\}.
	\]
	Then the following hold: 
	\begin{lemenum}
	    \item\label{lemitem:density-basic} If $|B_i| \geq \varepsilon_0 N$ for some $i$, then the coloring contains a blue $B_{(p^k+\beta)N}\up k$ or a red $B_{((1-p)^k+\beta)N}\up k$, where $\beta=(1-p)^k\delta_0 \varepsilon_0/2$.
	    \item\label{lemitem:density-counting} If, in addition, $|C_i| \geq \tau N$ for all $i$ and some $\tau>0$, then there exists some $0<\gamma<\beta$ depending on $\varepsilon_0,\tau,$ and $\delta$ such that the coloring contains $(p,\gamma)$-many books.
	\end{lemenum}
\end{lem}

\begin{proof}
	We may assume without loss of generality that $|B_1| \geq \varepsilon_0 N$. As in the proof of \cref{lem:block-config-suffices}, we need to split into two cases, depending on whether $C_1,\ldots,C_k$ is blue-blocked or red-blocked. We begin by assuming that it is $(k,\eta,\delta)$-red-blocked.

	First, as in the proof of \cref{lem:block-config-suffices}, observe that each $C_i$ contains at least one red $K_k$ and there is at least one blue $K_k$ spanning $C_1,\ldots,C_k$. Moreover, if we assume that $|C_i| \geq \tau N$ for all $i$, then \cref{lem:countinglemma} shows that the number of blue $K_k$ spanning $C_1,\ldots,C_k$ is at least
	\[
		\left( \prod_{1 \leq i <j \leq k}d_B(C_i,C_j)- \eta \binom k2 \right) \prod_{i=1}^k |C_i| \geq \left( \delta^{\binom k2}-\eta \binom k2 \right) (\tau N)^k \geq \left( \frac{\delta^{\binom k2}\tau^k}{2} \right) N^k
	\]
	and similarly, with an additional factor of $1/k!$, for the number of red $K_k$ inside each $C_i$. 

	For a vertex $v$ and $i \in [k]$, let $x_i(v)=d_B(v,C_i)$. \cref{lem:offdiagxi} implies that, for any $v \in V$,
	\[
		p \left( p^{-k}\prod_{i=1}^k x_i(v) \right) +(1-p) \left( \frac{(1-p)^{-k}}{k}\sum_{i=1}^k (1-x_i(v))^k \right) \geq 1.
	\]
	Additionally, if $v \in B_1$, then $|x_1(v)-p|\geq \varepsilon_0$, so \cref{lem:stable-offdiag-xi} implies that, for $v \in B_1$,
	\[
		p \left( p^{-k}\prod_{i=1}^k x_i(v) \right) +(1-p) \left( \frac{(1-p)^{-k}}{k}\sum_{i=1}^k (1-x_i(v))^k \right) \geq 1+\delta_0.
	\]
	Adding these two equations up over all $v \in V$ shows that 
	\[
		p \left( p^{-k} \sum_{v \in V}\prod_{i=1}^k x_i(v) \right) +(1-p) \left( \frac{(1-p)^{-k}}{k}\sum_{i=1}^k \sum_{v \in V}(1-x_i(v))^k \right) \geq N+\delta_0|B_1| \geq (1+\delta_0 \varepsilon_0)N.
	\]
	That is, a $(p,1-p)$-weighted average of two quantities is at least $(1+\delta_0 \varepsilon_0)N$, which implies that one of the quantities must itself be at least $(1+\delta_0 \varepsilon_0)N$. Suppose first that 
	\begin{equation*}
		p^{-k} \sum_{v \in V} \prod_{i=1}^k x_i(v) \geq (1+\delta_0 \varepsilon_0)N.
	\end{equation*}
	Let $Q$ be a uniformly random blue $K_k$ with one vertex in each of $C_1,\ldots,C_k$. Let $\alpha=\delta^{k^2} \leq \prod_{i<j} d_B(C_i,C_j)$, so that $\eta \leq \delta^{4k^2} = \alpha^4 \leq \alpha^3/k^2$. Therefore, applying  \cref{lem:randomclique} to each $v$ and summing up the result, we find that the expected number of blue extensions of $Q$ is at least
	\begin{align*}
		\sum_{v \in V} \left( \prod_{i=1}^k x_i(v)-4 \alpha \right) &\geq (p^k+ p^k\delta_0 \varepsilon_0-4 \alpha)N.
	\end{align*}
	Next, observe that
	\begin{equation}\label{eq:alpha-bound}
		4 \alpha=4 \delta^{k^2} \leq \frac{\delta^k }2 \leq \frac{((1-p)\delta_0 \varepsilon_0)^k}{2} \leq \frac{(1-p)^k \delta_0 \varepsilon_0}{2} \leq \frac{p^k \delta_0 \varepsilon_0}{2},
	\end{equation}
	which implies that the expected number of blue extensions of $Q$ is at least $(p^k+\beta)N$, where $\beta=(1-p)^k \delta_0 \varepsilon_0/2$. Thus, there exists a blue $B_{(p^k+\beta)N}\up k$, proving (a) in this case. Moreover, if we assume that $|C_i| \geq \tau N$ for all $i$, then our earlier computation shows that $Q$ is chosen uniformly at random from a set of at least $\kappa N^k$ monochromatic cliques, where $\kappa=\delta^{\binom k2} \tau^k/2$. We may therefore apply \cref{lem:markov-consequence} with $\xi=p^k+\beta$ and $\nu=p^k+\gamma$, for some appropriately chosen $0<\gamma<\beta$, to conclude that in this case our coloring contains at least $\gamma N^k$ blue cliques, each with at least $(p^k+\gamma)N$ blue extensions, proving (b).

	Therefore, we may assume that the second term in the weighted average is the large one, i.e., that
	\[
		\frac{(1-p)^{-k}}{k}\sum_{i=1}^k \sum_{v \in V}(1-x_i(v))^k\geq (1+\delta_0 \varepsilon_0)N,
	\]
	which implies that, for some $i$,
	\[
		\sum_{v \in V} (1-x_i(v))^k \geq (1-p)^k(1+ \delta_0 \varepsilon_0)N.
	\]
	Therefore, if $Q$ is now a random red $K_k$ inside this $C_i$,  \cref{lem:randomclique} implies that the expected number of red extensions of $Q$ is at least
	\[
		\sum_{v \in V} \left[ (1-x_i(v))^k-4 \alpha \right] \geq \left[ (1-p)^k+(1-p)^k \delta_0 \varepsilon_0-4 \alpha \right] N.
	\]
	But, by  (\ref{eq:alpha-bound}), $4 \alpha \leq (1-p)^k \delta_0 \varepsilon_0/2$, which implies that the expected number of red extensions of $Q$ is at least $((1-p)^k+\beta)N$, proving (a). As before, if we also assume that $|C_i| \geq \tau N$ for all $i$, then we may apply \cref{lem:markov-consequence} with $\kappa=\delta^{\binom k2}\tau^k/2k!$, $\xi=(1-p)^k+\beta$, and $\nu = (1-p)^k+\gamma$ to find that our coloring contains at least $\gamma N^k$ red $K_k$, each with at least $((1-p)^k+\gamma)N$ red extensions for some appropriately chosen $\gamma \in (0,\beta)$, yielding (b). This concludes the proof of the lemma in the case where $C_1,\ldots,C_k$ is a $(k,\eta,\delta)$-red-blocked configuration.

	As in the proof of \cref{lem:block-config-suffices}, the other case, where $C_1,\ldots,C_k$ is a $(k,\eta,\delta)$-blue-blocked configuration, follows in an almost identical fashion. 
	We define $y_i(v)=d_R(v,C_i)$ for all $v \in V$ and $i \in [k]$ and let $q=1-p$. We then apply \cref{lem:offdiagxi,lem:stable-offdiag-xi} with these $y$ variables and with $q$ instead of $p$. The remaining details are  exactly the same. 
\end{proof}

Next, we strengthen \cref{lem:density-p} by showing that not only does every part of a blocked configuration have density roughly $p$ to most vertices, but it is in fact $(p,\varepsilon)$-regular to the entire vertex set. Here, by saying that a pair of vertex subsets $(X,Y)$ is \emph{$(p,\varepsilon)$-regular}, we mean that $|d(X',Y')-p| \leq \varepsilon$ for every $X' \subseteq X$, $Y' \subseteq Y$ with $|X'| \geq \varepsilon |X|$, $|Y'| \geq \varepsilon |Y|$. Note that $(p,\varepsilon)$-regularity is equivalent, up to a linear change in the parameters, to $\varepsilon$-regularity with density $p \pm \varepsilon$.

\begin{lem}\label{lem:block-config-regular}
	Fix $p \in [\frac 12, 1)$ and let $k \geq k_2(p)$.
	Suppose $0<\varepsilon_1<\frac 14$,  $\varepsilon_0=\varepsilon_1^2/2$, and let $\delta_0=\delta_0(\varepsilon_0)$ be the parameter from \cref{lem:stable-offdiag-xi}. Let $0<\delta\leq (1-p)\delta_0 \varepsilon_0$ and $0<\eta\leq \varepsilon_1 2^{-4k^2}\delta^{4k^2}$ and suppose that $C_1,\ldots,C_k$ is either a $(k,\eta,\delta)$-red-blocked or a $(k,\eta,\delta)$-blue-blocked configuration in a red/blue coloring of $K_N$. 
	Then the following hold:
	\begin{lemenum}
	    \item \label{lemitem:regular-basic} If, for some $i$, the pair $(C_i,V)$ is not $(p,\varepsilon_1)$-regular in blue, then the coloring contains a blue $B\up k_{(p^{k}+\beta)N}$ or a red $B\up k _{((1-p)^k+\beta)N}$, where $\beta=(1-p)^k\delta_0 \varepsilon_0/2$.
	    \item\label{lemitem:regular-counting} If, in addition, $|C_i| \geq \tau N$ for all $i$ and some $\tau>0$, then the coloring contains $(p,\gamma)$-many books for some $0<\gamma<\beta$ depending on $\varepsilon_1,\delta$, and $\tau$.
	\end{lemenum}
\end{lem} 	

\begin{proof}
	Without loss of generality, suppose that $(C_1,V)$ is not $(p,\varepsilon_1)$-regular in blue. Then there exist $C_1' \subseteq C_1, D \subseteq V$ with $|C_1'| \geq \varepsilon_1 |C_1|, |D| \geq \varepsilon_1 N$ such that $|d_B(C_1',D)-p| >\varepsilon_1$. Assume first that $d_B(C_1',D) \geq p+\varepsilon_1$. Let $D_1 \subseteq D$ denote the set of vertices $v \in D$ with $d_B(v,C_1')< p +\frac{\varepsilon_1}{2}$ and let $D_2=D \setminus D_1$. Then we have that
	\[
		\left(p+\varepsilon_1\right) |C_1'||D| \leq \sum_{v \in D_1} e_B(v,C_1')+\sum_{v \in D_2} e_B(v,C_1') \leq \left( p+ \frac {\varepsilon_1}2 \right) |C_1'||D|+|C_1'||D_2|,
	\]
	which implies that $|D_2| \geq \frac{\varepsilon_1}{2}|D| \geq \frac{\varepsilon_1^2}{2}N=\varepsilon_0 N$,  where each $v \in D_2$ has $d_B(v,C_1') \geq p+\frac{\varepsilon_1}{2}$. Now, consider the $k$-tuple of sets $C_1',C_2,\ldots,C_k$; by the hereditary property of regularity, we see that this is a $(k,\eta',\delta')$-blocked configuration, where $\eta'=\eta/\varepsilon_1$ and $\delta'=\delta- \eta \geq \delta/2$. This implies that $\delta' \leq (1-p) \delta_0 \varepsilon_0$ and $\eta' \leq (\delta')^{4k^2}$. 
	Therefore, we may apply \cref{lemitem:density-basic} to the $(k,\eta',\delta')$-blocked configuration $C_1',C_2,\ldots,C_k$ to conclude that the coloring contains a blue $B_{(p^k+\beta)N}\up k$ or a red $B_{((1-p)^k+\beta)N}\up k$. Moreover, if we assume that $|C_i| \geq \tau N$ for all $i$, then $|C_i'| \geq \varepsilon_1 \tau N$ for all $i$, where $C_i'=C_i$ if $i\geq 2$. Thus, \cref{lemitem:density-counting} implies that in this case the coloring contains $(p,\gamma)$-many books for some $0<\gamma<\beta$ depending on $\varepsilon_1,\delta$, and $\tau$. 

	To complete the proof of the lemma, we also need to check the case where $d_B(C_1',D) \leq p- \varepsilon_1$. However, the proof is essentially identical: we find a subset $D_2 \subseteq D$ such that every vertex $v \in D_2$ has $d_B(v,C_1') \leq p- \frac{\varepsilon_1}{2}$ and such that $|D_2| \geq \frac{\varepsilon_1}{2}|D|$ and then the rest of the proof is as above. 
\end{proof}

Our next technical lemma gives the inductive step for our proof of \cref{thm:quasirandom}. The proof mimics that of \cref{thm:offdiagupper}, except that the vertex set is split into parts that were already pulled out as regular and a part that has not yet been touched. Inside the untouched part, we build a reduced graph and use it to find either many large monochromatic books or a blocked configuration, at which point \cref{lem:block-config-regular} implies that the induction can continue.

\begin{lem}\label{lem:pull-out-regular}
	Fix $p \in [\frac 12,1)$ and let $k \geq k_2(p)$. Fix $0<\varepsilon\leq p/(20 k)$ and suppose that the edges of the complete graph $K_N$ with vertex set $V$ have been red/blue colored. 
	Suppose that $A_1,\ldots,A_\ell$ are disjoint subsets of $V$ such that $(A_i,V)$ is $(p, \varepsilon^2)$-regular for all $i$. Let $W= V \setminus (A_1 \cup \dotsb \cup A_\ell)$ and suppose that $|W| \geq \varepsilon N$. Then either there is some $A_{\ell+1} \subseteq W$ such that $(A_{\ell+1},V)$ is $(p, \varepsilon^2)$-regular or else the coloring contains $(p,\gamma)$-many books for some $\gamma>0$ depending on $\varepsilon$, $p,$ and $k$.
\end{lem}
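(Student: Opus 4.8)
The plan is to follow the proof of \cref{thm:offdiagupper}, now with the vertex set of $K_N$ split into the already-extracted regular pieces $A_1,\dots,A_\ell$ and the untouched region $W$: I would regularize only $W$, build a reduced graph on its parts, and keep the pieces $A_i$ in reserve to help supply monochromatic extensions. First fix parameters matched to \cref{lem:block-config-regular}: put $\varepsilon_1=\varepsilon^2$ and $\varepsilon_0=\varepsilon_1^2/2$, let $\delta_0=\delta_0(\varepsilon_0)$ be the constant from \cref{lem:stable-offdiag-xi}, set $\delta=(1-p)\delta_0\varepsilon_0$ and $\eta=\varepsilon_1 2^{-4k^2}\delta^{4k^2}$, let $M_0=1/\eta$ and $M=M(\eta,M_0)$ be as in \cref{reglem}, and put $\tau=\varepsilon/M$; the final $\gamma>0$ is chosen at the very end, small in terms of all these quantities. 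Applying \cref{reglem} to the red graph induced on $W$ yields an equitable partition $W=V_1\sqcup\dots\sqcup V_m$ with $M_0\le m\le M$, each $V_i$ being $\eta$-regular and forming a non-$\eta$-regular pair with at most $\eta m$ other parts; since $|W|\ge\varepsilon N$, each $|V_i|=|W|/m\ge\tau N$. Call $V_i$ \emph{blue} if its internal blue density is at least $\tfrac12$ and \emph{red} otherwise; as $p\ge\tfrac12$, either at least $pm$ parts are blue or at least $(1-p)m$ are red, and these cases are symmetric under interchanging red with blue and $p$ with $1-p$, so I treat the blue-majority case.

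I would then form the reduced graph $G$ on the blue parts $v_1,\dots,v_{m'}$ (with $m'\ge pm$), joining $v_i$ to $v_j$ precisely when $(V_i,V_j)$ is $\eta$-regular with $d_R(V_i,V_j)\ge\delta$, and run the degree dichotomy of \cref{thm:offdiagupper} (the hypothesis $\varepsilon\le p/(20k)$ makes the accompanying inequalities go through). If every vertex of $G$ has degree exceeding $\bigl(1-\tfrac1{k-1}\bigr)m'$, then Tur\'an's theorem produces a $K_k$ in $G$ whose $k$ parts $C_1,\dots,C_k$ form a $(k,\eta,\delta)$-blue-blocked configuration with every $|C_i|\ge\tau N$; our parameter choices satisfy the hypotheses of \cref{lem:block-config-regular}, so either the coloring contains $(c,\gamma)$-many books (and we are done) or $(C_i,V)$ is $(p,\varepsilon_1)$-regular in blue for every $i$, in which case --- since $\varepsilon_1=\varepsilon^2$ and $C_1\subseteq W$ is disjoint from $A_1,\dots,A_\ell$ --- we may take $A_{\ell+1}=C_1$. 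Otherwise some $v_1$ has degree at most $\bigl(1-\tfrac1{k-1}\bigr)m'$, so $V_1$ has blue density at least $1-\delta$ onto the union $U$ of at least $\tfrac{m'}{k-1}-\eta m$ blue parts; passing to the set $V_1'\subseteq V_1$ of vertices with blue density $\ge1-2\delta$ into $U$ (so that $|V_1'|\ge|V_1|/2$ and $V_1'$ is $2\eta$-regular with internal blue density $\ge\tfrac13$), the counting lemma produces at least $\kappa N^k$ blue copies of $K_k$ in $V_1'$ for some $\kappa=\kappa(\varepsilon,p,k)>0$, each with at least $(1-2k\delta)|U|$ blue extensions inside $U$. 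One then argues --- this is the delicate point, discussed below --- that, after also counting extensions into $A_1\cup\dots\cup A_\ell$, a uniformly random such clique has more than $(p^k+\gamma')N$ blue extensions in expectation for a fixed $\gamma'>0$, and then \cref{lem:markov-consequence} converts this into $(c,\gamma)$-many books. The red-majority case is identical with the two colors swapped.

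The step I expect to be the main obstacle is this last, low-degree case. The extensions supplied by $U$ lie entirely inside $W$, so they number only $\Theta(|W|)$, which can be far below $(p^k+\gamma)N$ once $W$ has been shrunk over the course of the iteration; hence one must also account for blue extensions into the previously extracted pieces $A_1,\dots,A_\ell$, whose blue density with large sets is close to $p$. The difficulty is that an individual blue $K_k\subseteq V_1'$ is far too small for the $(p,\varepsilon^2)$-regularity of $(A_j,V)$ to control its extensions into $A_j$; so one must instead proceed on average, double-counting blue copies of $K_{k+1}$ whose first $k$ vertices span a blue $K_k$ in $V_1'$ and whose last vertex lies in $A_j$ (via the counting lemma applied to an appropriate regular sub-pair of $(V_1',A_j)$), and then recover the required $\gamma N^k$ cliques with many extensions from \cref{lem:markov-consequence}. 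Arranging the regularity parameters so that this averaging is legitimate --- and so that the contributions from $U$ and from $\bigcup_j A_j$ together exceed $(p^k+\gamma)N$ irrespective of the size of $W$ --- is the technical heart of the argument.
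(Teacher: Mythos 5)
Your overall architecture is the same as the paper's (regularize $W$, reduced graph on the majority-color parts, degree dichotomy, Tur\'an $+$ \cref{lem:block-config-regular} in the high-degree case, a big book in the low-degree case), and you correctly identified the crux — controlling blue extensions from a clique in $W_1$ into the already-extracted pieces $A_1,\dots,A_\ell$. But your proposed resolution of that crux does not work, and your dichotomy is set up in a way that makes it unresolvable. Specifically, you split on ``one reduced-graph vertex with degree $\le (1-\frac{1}{k-1})m'$'' versus ``all exceed the Tur\'an threshold.'' In the first case you are left with a single part $W_1$, whose subset $V_1'$ has $|V_1'| = |W|/(2m) \ge \varepsilon N/(2M)$, which is vastly smaller than $\varepsilon^2 N$ (since $M$ is a regularity-lemma constant with $M \gg 1/\varepsilon$). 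At that scale the $(p,\varepsilon^2)$-regularity of $(A_j,V)$ says nothing about $(A_j,V_1')$ — by the hereditary property you would only get $O(M\varepsilon)$-regularity, which is vacuous — so there is no ``appropriate regular sub-pair of $(V_1',A_j)$'' to feed to the counting lemma, and the double-counting you sketch cannot get off the ground.

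The paper's proof avoids this by changing the dichotomy: it uses the higher threshold $(1-p^{k-1}-\beta'/p-\eta/p)m'$ (where $\beta'=4\varepsilon$, and one checks $p^{k-1}+\beta'/p+\eta/p < 1/(k-1)$ so this is indeed above the Tur\'an threshold) and splits on whether \emph{at least $\varepsilon m$} reduced-graph vertices fall below it. If so, let $S$ be the set of their indices and $T=\bigcup_{j\in S}W_j$; then $|T| \ge \varepsilon|W| \ge \varepsilon^2 N$, so $(p,\varepsilon^2)$-regularity of $(A_i,V)$ gives $d_B(A_i,T) = p \pm \varepsilon^2$. Sampling a uniformly random $j \in S$ therefore gives $\E_j[d_B(W_j,A_i)] = p \pm \varepsilon^2$, and this two-step random choice (first $j$, then a random blue $K_k$ in $W_j$) is what makes \cref{lem:randomclique} and Jensen applicable to bound the expected extensions into each $A_i$ from below by roughly $p^k|A_i|$; the $\beta'$ slack in $|U|$ then pushes the total expectation over $p^kN$ by a constant margin, after which \cref{lem:markov-consequence} finishes. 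If instead fewer than $\varepsilon m$ vertices fall below the higher threshold, the slack baked into that threshold is exactly what guarantees the \emph{average} degree still exceeds $(1-\frac{1}{k-1})m'$, so Tur\'an applies. Your version of the high-degree case is fine as stated, but to make the low-degree case work you must adopt both modifications: the higher threshold (to survive excluding $\varepsilon m$ vertices in the Tur\'an computation) and the requirement of $\ge \varepsilon m$ low-degree parts (so their union $T$ is large enough for the regularity of $(A_i,V)$ to bite).
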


\begin{proof}
	Let $\varepsilon_1=\varepsilon^2$, $\varepsilon_0=\varepsilon_1^2/2,$ and $\delta_0=\delta_0(\varepsilon_0)$ be the parameter from \cref{lem:stable-offdiag-xi} and set $\delta=(1-p)\delta_0 \varepsilon_0$, $\eta=\varepsilon^2 2^{-4k^2} \delta^{4k^2}$, $\beta=k p^{k-1}\varepsilon^2$, and $\beta'=4 \varepsilon$. We apply \cref{reglem} to the subgraph induced on $W$, with parameters $\eta$ and $M_0=1/\eta$, to obtain an equitable partition $W=W_1 \sqcup \dotsb \sqcup W_m$, where $M_0 \leq m \leq M=M(\eta,M_0)$. Call a part $W_i$ \emph{blue} if $d_B(W_i) \geq \frac 12$ and \emph{red} otherwise. As in the proof of \cref{thm:offdiagupper}, we first assume that at least $m' \geq pm$ of the parts are blue and rename them so that $W_1,\ldots,W_{m'}$ are the blue parts. 

	We build a reduced graph $G$ on vertex set $w_1,\ldots,w_{m'}$, connecting $w_{i_1}$ and $w_{i_2}$ by an edge if $(W_{i_1},W_{i_2})$ is $\eta$-regular and $d_R(W_{i_1},W_{i_2}) \geq \delta$. Suppose that $w_1$ has at most $(1-p^{k-1}-\beta'/p-\eta/p)m'-1$ neighbors in $G$. Since $w_1$ has at most $\eta m \leq \eta m'/p$ non-neighbors coming from irregular pairs, 
	this means that there are at least $(p^{k-1}+\beta'/p)m'$ parts $W_j$ with $2 \leq j \leq m'$ such that $(W_1,W_j)$ is $\eta$-regular and $d_B(W_1,W_j) \geq 1-\delta$. Let $J$ be the set of these indices $j$ and set $U=\bigcup_{j \in J} W_j$.
	By the counting lemma, \cref{lem:countinglemma}, $W_1$ contains at least $\frac{1}{k!}\left(2^{-\binom k2}-\eta \binom k2\right)|W_1|^k$ blue copies of $K_k$ and
	\[
		\frac{1}{k!}\left(2^{-\binom k2}- \eta \binom k2\right)|W_1|^k \geq \frac{2^{-k^2}}{k!} \left( \frac{|W|}{M} \right) ^k \geq \left(\frac{ \varepsilon N}{k 2^k M}\right)^k,
	\]
	where we use that $\eta\leq \delta^{4k^2} \leq \delta^{\binom k2}/\binom k2$ and that $2^{-\binom k2}-\delta^{\binom k2} >2^{-k^2}$, along with our assumption that $|W| \geq \varepsilon N$. If we set $\kappa=( \varepsilon/k 2^k M)^k$, then this implies that $W_1$ contains at least $\kappa N^k$ blue $K_k$. If we pick a uniformly random such blue $K_k$, then \cref{lem:randomclique} with $\alpha=\delta^{k^2} \leq 2^{-\binom k2}\leq d_B(W_1)^{\binom k2}$ implies that its expected number of blue extensions inside $U$ is at least 
	\begin{align*}
		\sum_{u \in U} \left(d_B(u,W_1)^k-4 \alpha\right) \geq \left[ (1- \delta)^k- \delta^k \right] |U| \geq (1-2k \delta) |U|,
	\end{align*}
	where we first use Jensen's inequality applied to the convex function $x \mapsto x^k$ to lower bound $\sum_u d_B(u,W_1)^k$ by $(1- \delta)^k|U|$ and then use that $(1- \delta)^k \geq 1-k \delta$ and $4 \delta^{k^2} \leq \delta^k \leq k \delta$. Since we assumed that $J$ was large and the partition is equitable, we find that
	\[
		|U| \geq (p^{k-1}+\beta'/p)m' |W_j|\geq (p^{k}+\beta')|W|.
	\]
	Thus, a random blue $K_k$ inside $W_1$ has at least $(1-2k \delta)(p^{k}+\beta')|W|$ blue extensions in $W$. 

	Now, suppose that instead of just $w_1$ having low degree in $G$, we have a set of at least $\varepsilon m$ vertices $w_j \in V(G)$, each with at most $(1-p^{k-1}-\beta'/p-\eta/p)m'-1$ neighbors in $G$. Let $S$ be the set of these $j$ and $T=\bigcup_{j \in S} W_j$. By the above argument, for every $j\in S$, we have that $W_j$ contains at least $\kappa N^k$ blue $K_k$ such that a uniformly average one among them has at least $(1-2k \delta)(p^{k}+\beta')|W|$ blue extensions into $W$. Moreover, we have that 
	\[
		|T|=|S||W_j| \geq \varepsilon m \frac{|W|}{m}=\varepsilon |W| \geq \varepsilon^2 |V|.
	\]
	We may therefore apply the $(p,\varepsilon^2)$ regularity of $(A_i, V)$ to conclude that $d_B(A_i,T)=p \pm \varepsilon^2$ for all $i$. Thus, if we pick $j \in S$ randomly, then $\E[d_B(W_j,A_i)]=p \pm \varepsilon^2$. Therefore, if we first sample $j \in S$ randomly and then pick a random blue $K_k$ inside $W_j$, then \cref{lem:randomclique} implies that this random blue $K_k$ will have in expectation at least
	\begin{align*}
		\sum_{a \in A_i} \left( d_B(a,W_j)^k-4 \delta ^{k^2} \right) &\geq \left[ \left( p- \varepsilon^2 \right) ^k- \delta^k \right] |A_i| \\
		&\geq \left[ p^k \left( 1- \frac{k\varepsilon^2}{p} \right) -\delta^k \right] |A_i|\\
		&\geq p^k \left( 1- \frac{2k\varepsilon^2}{p} \right) |A_i|
	\end{align*}
	blue extensions into $A_i$, again by Jensen's inequality. This implies that this random $K_k$ has in expectation at least $(1 -2k \varepsilon^2/p) p^{k}|A_1 \cup \dotsb \cup A_\ell|$ extensions into $A_1 \cup \dotsb \cup A_\ell$. Adding up the extensions into this set and into $W$, its complement, shows that this random blue $K_k$ has in expectation at least $\xi N$ blue extensions, where $\xi$ is a weighted average of $(1-2k \varepsilon^2/p)p^{k}$ and $(1-2k \delta)(p^{k}+\beta')$, and where the latter quantity receives weight at least $\varepsilon$, since $|W| \geq \varepsilon N$. Thus,
	\begin{align*}
		\xi &\geq (1- \varepsilon)\left(1-\frac{2k \varepsilon^2}p\right) p^{k}+\varepsilon(1-2k \delta)(p^{k}+\beta')\\
		&\geq \left(1-\frac{2k \varepsilon^2}p- \varepsilon\right)p^k+\varepsilon(1-2k \delta)(1+p^{-k} \beta')p^k\\
		&\geq \left(1-\frac{2k \varepsilon^2}p- \varepsilon\right)p^k+\varepsilon \left(1+\frac{3k \varepsilon}p\right)p^k\\
		&=p^k\left(1+\frac{k \varepsilon^2}p\right)\\
		&=p^k+\beta,
	\end{align*}
	where we used the definition of $\beta$, the fact that $2k\delta<p^{-k}\beta'/4$, that $(1-x/4)(1+x) \geq 1+x/2$ for all $x \in [0,1]$, and that $p^{-k} \beta' \geq 6k \varepsilon/p$, which follows since $\beta'=4\varepsilon$ and, as in the proof of \cref{lem:offdiagxi},  $p^{1-k} \geq e^2 k \geq \frac 32 k$ for $k \geq k_2(p)$. Therefore, by \cref{lem:markov-consequence}, we can find at least $\gamma N^k$ blue $K_k$, each with at least $(p^{k}+\gamma)N$ blue extensions, for some $\gamma<\beta$ depending on $\varepsilon$ and $\beta$ and, thus, only on $\varepsilon$, $p$, and $k$. 

	Therefore, we may assume that in $G$, all but $\varepsilon m\leq \varepsilon m'/p$ of the vertices have degree at least $(1-p^{k-1}-\beta'/p- \eta/p)m'$. 
	Hence, the average degree in $G$ is at least
	\begin{align*}
		\left(1- \frac{\varepsilon}p\right) \left( 1-p^{k-1} -\frac{\beta'}p- \frac \eta p \right)m' & \geq \left( 1-p^{k-1} - \frac{6 \varepsilon }p\right)m' \geq \left(1- p^{k-1}-\frac 1{3k} \right)m',
	\end{align*}
	since $\beta'=4 \varepsilon$, $\eta \leq \varepsilon$, and $\varepsilon \leq p/(20k)$. By \eqref{eq:3k}, the fact that $k \geq k_2(p)$ implies that $p^{k-1} \leq 1/(3k)$. Therefore, the average degree in $G$ is greater than $(1-1/(k-1))m'$, so, by Tur\'an's theorem, $G$ will contain a $K_k$. Let $w_{i_1},\ldots,w_{i_k}$ be the vertices of this $K_k$ and let $C_j=W_{i_j}$ for $1 \leq j \leq k$. Then, by the definition of $G$, we see that $C_1,\ldots,C_k$ is a $(k,\eta,\delta)$-blue-blocked configuration with $|C_i| \geq \tau N$ for all $i$, where $\tau=\varepsilon/M$ depends only on $\varepsilon$, $p$, and $k$.  Thus, by \cref{lemitem:regular-counting}, we see that either the coloring contains $(p,\gamma)$-many books for some $\gamma$ depending on $\varepsilon,p,$ and $k$ or else $(C_j,V)$ is $(p,\varepsilon^2)$-regular for all $j$. In the latter case, we can set $A_{\ell+1}=C_1$ (or any other $C_j$) and get the desired result. 

	Now, we need to assume instead that at least $m'' \geq (1-p)m$ of the parts $W_i$ are red. However, just as in the proof of \cref{thm:offdiagupper}, the argument is essentially identical: we first rule out the existence of too many low-degree vertices in the reduced graph by counting extensions to $W$ and to $A_1\cup \dotsb \cup A_\ell$ and then apply Tur\'an's theorem to find a $K_k$ in the reduced graph, which completes the proof by 
	\cref{lemitem:regular-counting}.
\end{proof}

By repeatedly applying \cref{lem:pull-out-regular} until $W$ has fewer than $\varepsilon N$ vertices, we can partition $K_N$ into a collection of subsets $A_i$ such that $(A_i,V)$ is $(p,\varepsilon^2)$-regular, plus a small remainder set $A_{\ell+1}$ about which we have no such information. Our final technical lemma shows that such a structural decomposition suffices to conclude that the coloring is $(p,\theta)$-quasirandom.
\begin{lem}\label{lem:partition-implies-quasirandomness}
	Let $\varepsilon\leq \theta/3$. Suppose we have a partition
	\[
		V(K_N)=A_1 \sqcup \dotsb \sqcup A_\ell \sqcup A_{\ell+1}
	\]
	where $(A_i,V)$ is $(p,\varepsilon)$-regular for each $1 \leq i\leq \ell$ and $|A_{\ell+1}| \leq \varepsilon N$. Then the coloring is $(p,\theta)$-quasirandom.
\end{lem}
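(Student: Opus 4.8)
The plan is a direct aggregation argument. Fix a pair of disjoint sets $X,Y \subseteq V(K_N)$; the goal is to show $\ab{e_B(X,Y)-p\ab X\ab Y}\le \theta N^2$. First I would dispose of the degenerate case $\ab Y<\varepsilon N$ (and, symmetrically, $\ab X<\varepsilon N$): here both $e_B(X,Y)$ and $p\ab X\ab Y$ lie in $[0,\varepsilon N^2]$, so their difference is at most $\varepsilon N^2\le\theta N^2$. Hence from now on I may assume $\ab X,\ab Y\ge\varepsilon N$.

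For the main case, call an index $i\in[\ell]$ \emph{large} if $\ab{X\cap A_i}\ge\varepsilon\ab{A_i}$ and \emph{small} otherwise, and let $L$ be the set of large indices. For $i\in L$, the sets $X\cap A_i\subseteq A_i$ and $Y\subseteq V$ meet the size thresholds in the definition of $(p,\varepsilon)$-regularity of $(A_i,V)$, using $\ab Y\ge\varepsilon N=\varepsilon\ab V$, so $e_B(X\cap A_i,Y)=p\ab{X\cap A_i}\ab Y\pm\varepsilon\ab{X\cap A_i}\ab Y$. Summing over $i\in L$ gives $\sum_{i\in L}e_B(X\cap A_i,Y)=p\ab Y\sum_{i\in L}\ab{X\cap A_i}\pm\varepsilon\ab X\ab Y$, and the error here is at most $\varepsilon N^2$.

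It remains to control the blue edges from the rest of $X$ to $Y$, namely from the vertices of $X$ lying in small parts or in $A_{\ell+1}$. Setting $R=\ab X-\sum_{i\in L}\ab{X\cap A_i}$, the bound $\ab{X\cap A_i}<\varepsilon\ab{A_i}$ for small $i$ together with $\ab{X\cap A_{\ell+1}}\le\ab{A_{\ell+1}}\le\varepsilon N$ gives $R\le\varepsilon\sum_{i=1}^{\ell}\ab{A_i}+\varepsilon N\le 2\varepsilon N$. Since every blue edge from $X$ to $Y$ not already counted originates from a vertex set of size exactly $R$, both $e_B(X,Y)-\sum_{i\in L}e_B(X\cap A_i,Y)$ and $pR\ab Y$ lie in $[0,R\ab Y]$, so they differ by at most $R\ab Y\le 2\varepsilon N^2$. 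Writing $\ab X=\sum_{i\in L}\ab{X\cap A_i}+R$ and combining the two estimates yields $\ab{e_B(X,Y)-p\ab X\ab Y}\le\varepsilon\ab X\ab Y+R\ab Y\le 3\varepsilon N^2\le\theta N^2$, which is exactly the claim.

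I do not expect any genuine obstacle: the argument is purely bookkeeping. The one place to be careful is tracking which vertices of $X$ get absorbed into the error term — the small parts together with the leftover set $A_{\ell+1}$ — and verifying that their total mass is at most $2\varepsilon N$, so that the final constant comes out to $3\varepsilon$, matching the hypothesis $\varepsilon\le\theta/3$ with no slack to spare.
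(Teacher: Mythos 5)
Your argument is correct and is essentially identical to the paper's proof: you split $X$ by parts, use the $(p,\varepsilon)$-regularity of $(A_i,V)$ on the indices where $|X\cap A_i|\geq\varepsilon|A_i|$, and absorb the small parts together with $A_{\ell+1}$ into an error of total mass at most $2\varepsilon N$, giving the final bound $3\varepsilon N^2\le\theta N^2$. The only cosmetic difference is that you also excise the case $|X|<\varepsilon N$, which is harmless but unnecessary, since only $|Y|\geq\varepsilon N$ is ever used.
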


\begin{proof}
	Fix disjoint $X,Y \subseteq V(K_N)$. We need to check that
	\[
		\left|e_B(X,Y)- p |X||Y| \right| \leq \theta N^2.
	\]
	First, observe that if $|Y| \leq \varepsilon N$, then 
	\[
		\left|e_B(X,Y)- p |X||Y| \right| \leq |X| |Y| \leq \varepsilon N^2 \leq \theta N^2.
	\]
	Therefore, from now on, we may assume that $|Y| \geq \varepsilon N$. For $1 \leq i \leq \ell+1$, let $X_i=A_i \cap X$ and define $I_X=\{1 \leq i \leq \ell:|X_i| \geq \varepsilon |A_i|\}$. Then we have that 
	\[
		\sum_{i \notin I_X} |X _i| \leq |A_{\ell+1}|+ \varepsilon \sum_{i=1}^\ell |A_i| \leq 2\varepsilon N.
	\]
	We now write
	\[
		e_B(X,Y)-p |X||Y|=\sum_{i=1}^{\ell+1} \left( e_B(X_i,Y)-p |X_i||Y| \right) .
	\]
	We will split this sum into two parts, depending on whether $i \in I_X$ or not. First, suppose that $i \in I_X$. Then $|X_i| \geq \varepsilon |A_i|$ and $|Y| \geq \varepsilon |V|$, 
	so we may apply the $(p, \varepsilon)$-regularity of $(A_i,V)$ to conclude that
	\[
		\sum_{i \in I_X}\left|e_B(X_i,Y)-p|X_i||Y|\right|=\sum_{i \in I_X}\left|d_B(X_i,Y)-p\right| |X_i||Y| \leq \sum_{i \in I_X}\varepsilon |X_i||Y| \leq \varepsilon |X||Y| \leq \varepsilon N^2.
	\]
	On the other hand, since $\sum_{i \notin I_X} |X_i| \leq 2 \varepsilon N$, we have that
	\[
		\sum_{i \notin I_X} \left| e_B(X_i,Y)- p |X_i||Y| \right| \leq  |Y|\sum_{i \notin I_X} |X_i| \leq |Y| (2 \varepsilon N) \leq 2\varepsilon N^2.
	\]
	Adding these together, we conclude that
	\[
		\left| e_B(X,Y) -p |X||Y|\right| \leq 3 \varepsilon N^2 \leq \theta N^2,
	\]
	as desired. 
\end{proof}

With all these pieces in place, the proof of \cref{thm:quasirandom} becomes quite straightforward.

\begin{proof}[Proof of \cref{thm:quasirandom}]
	Fix $p \in [\frac 12,1)$ and suppose $k \geq k_0 \coloneqq k_2(p)$. Fix $\theta>0$ and set $\varepsilon=\min \{\theta/3,p/(20k)\}$. Let $\gamma=\gamma(\theta,p,k)$ be the parameter from \cref{lem:pull-out-regular}. Suppose we are given a coloring of $K_N$ without $(p,\gamma)$-many books. We wish to prove that the coloring is $(p,\theta)$-quasirandom. We inductively apply \cref{lem:pull-out-regular} to find a sequence $A_1,\ldots,A_\ell$ of vertex subsets such that $(A_i,V)$ is $(p,\varepsilon^2)$-regular for all $i$ and, therefore, $(p,\varepsilon)$-regular for all $i$. We continue until the remainder set $W=V \setminus(A_1 \cup \dotsb \cup A_\ell)$ satisfies $|W| \leq \varepsilon N$, at which point the assumptions of \cref{lem:pull-out-regular} are no longer met, so we set $A_{\ell+1} = W$. However, at this point, we can apply \cref{lem:partition-implies-quasirandomness} to conclude that our coloring is indeed $(p,\theta)$-quasirandom.
\end{proof}

\subsection{The converse}

In this section, we prove a converse to \cref{thm:quasirandom}, which implies that not containing $(p,\gamma)$-many books is an equivalent characterization of $p$-quasirandomness.

\begin{thm}\label{thm:quasirandom-converse}
    Fix $k \geq 2$ and $p \in (0,1)$. Then, for every $\gamma>0$, there exists some $\theta>0$ such that the following holds for every $(p,\theta)$-quasirandom coloring of $E(K_N)$ with $N$ sufficiently large. Apart from fewer than $\gamma N^k$ exceptions, every red $K_k$ has $((1-p)^k \pm \gamma)N$ extensions to a red $K_{k+1}$ and every blue $K_k$ has $(p^k \pm \gamma)N$ extensions to a blue $K_{k+1}$. In particular, the coloring 
does not contain $(p,\gamma)$-many books.
\end{thm}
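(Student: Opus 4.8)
The plan is to run a routine second-moment argument. Fix $k\ge 2$, $p\in(0,1)$ and $\gamma'>0$, write $q=1-p$, and for a monochromatic $K_k$, say $Q$, let $t_Q$ denote its number of monochromatic extensions (common blue neighbours if $Q$ is blue, common red neighbours if $Q$ is red). The goal is to show that, once $\theta$ is small enough and $N$ is large enough, a uniformly random blue $K_k$ has $t_Q=(p^k\pm\gamma')N$ with probability $1-o(1)$ and a uniformly random red $K_k$ has $t_Q=(q^k\pm\gamma')N$ with probability $1-o(1)$. Since the coloring contains $\Theta_{p,k}(N^k)$ monochromatic $K_k$ of each colour, this immediately gives the desired ``at most $\gamma'N^k$ exceptions'' bound after adjusting constants, and, noting that $p^k\le\tfrac1k$ and $q^k\le\tfrac ck$ (both equivalent to $kp^k\le 1$, and in any case the thresholds $p^kN$, $q^kN$ are the natural ones), it rules out $(c,\gamma')$-many books. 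It is also worth observing that when $N=(p^{-k}+o(1))n$ these thresholds $p^kN$ and $q^kN$ are precisely $n$ and $cn$, so the statement is just that almost all monochromatic $K_k$ in a near-extremal colouring for \cref{thm:offdiagupper} sit at the random threshold.

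The one external ingredient is the standard fact that $(p,\theta)$-quasirandomness forces all small subgraph counts to be near their random values: for every fixed graph $F$, the blue graph of $K_N$ contains $(p^{e(F)}\pm\varepsilon)N^{|V(F)|}$ labelled copies of $F$ and the red graph contains $(q^{e(F)}\pm\varepsilon)N^{|V(F)|}$, where $\varepsilon=\varepsilon(\theta,F)\to0$ as $\theta\to0$. This can be quoted from the Chung--Graham--Wilson equivalences \cite{ChGrWi}, or derived quantitatively by first checking (splitting overlapping sets into disjoint pieces to handle the non-disjointness in the definition of quasirandomness) that the single set $V=V(K_N)$ is $O(\theta^{1/3})$-regular in blue with density $p\pm O(\theta^{1/3})$, and then applying the counting lemma, \cref{lem:countinglemma}, for the clique counts together with its routine extension to general $F$ for the one non-clique used below. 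Given this, let $X_j$ be the number of blue $K_j$; then $X_k=(\tfrac1{k!}p^{\binom k2}+o(1))N^k$ and $X_{k+1}=(\tfrac1{(k+1)!}p^{\binom{k+1}2}+o(1))N^{k+1}$, and counting incidences between blue $K_k$ and blue $K_{k+1}$ gives, for a uniformly random blue $K_k$ $Q$,
\[
	\E[t_Q]=\frac{(k+1)X_{k+1}}{X_k}=(p^k+o(1))N .
\]
For the second moment, $t_Q^2$ counts ordered pairs $(u,v)$ of common blue neighbours of $Q$ with $u=v$ allowed; summing over blue $K_k$, the diagonal part contributes $\sum_Q t_Q=O(N^{k+1})=o(N^{k+2})$, while the off-diagonal part is exactly the number of labelled copies in the blue graph of $K_{k+2}$ with one edge deleted (equivalently, $K_k$ joined to an independent pair), which by the ingredient above is $(p^{\binom{k+2}2-1}+o(1))N^{k+2}=(p^{\binom k2+2k}+o(1))N^{k+2}$ since $\binom{k+2}2-1=\binom k2+2k$. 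Dividing by $X_k$ yields $\E[t_Q^2]=(p^{2k}+o(1))N^2$, hence $\mathrm{Var}(t_Q)=\E[t_Q^2]-\E[t_Q]^2=o(N^2)$, with all the $o(1)$'s depending only on $\theta$ once $N$ is large. Chebyshev's inequality then gives $\pr_Q[\,|t_Q-p^kN|>\gamma'N\,]=o(1)$, which is smaller than $\gamma'$ times a constant depending only on $p$ and $k$ once $\theta$ is small and $N$ large, so at most $\gamma'N^k$ blue $K_k$ fail the bound; interchanging $p$ and $q$ handles the red $K_k$, and the ``in particular'' follows as described in the first paragraph.

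I do not anticipate a genuine obstacle here — this is a textbook moments computation. The only points that need a little care, all of them routine, are making the error terms in the subgraph-counting step uniform in $N$, recording the extension of \cref{lem:countinglemma} to the single non-clique graph used in the second moment, and the labelled-versus-unlabelled bookkeeping; essentially all of the (modest) effort is in the variance estimate.
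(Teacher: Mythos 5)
Your argument is essentially the paper's own proof: both invoke the Chung--Graham--Wilson subgraph-counting equivalence to estimate the numbers of blue $K_k$, $K_{k+1}$, and $K_{k+2}-e$, and then run a second-moment calculation on the number of extensions $\ext_B(Q)$, the paper bounding $\sum_Q(\ext_B(Q)-p^kN)^2$ directly while you pass through $\E[t_Q]$, $\E[t_Q^2]$, $\mathrm{Var}(t_Q)$ and Chebyshev — the same algebra either way. Your caveat about $kp^k\le 1$ is well placed: the paper's own proof also only establishes the thresholds $p^kN$ and $(1-p)^kN$ (as the authors' remark about being ``asymptotic to what they would be in a random coloring'' confirms), so the ``$\tfrac1k$'' and ``$\tfrac ck$'' in the theorem statement, and hence the ``in particular'' about $(c,\gamma')$-many books, really do require $kp^k\le1$, the same minor imprecision you noted.
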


\begin{rem}
	In this direction, there is no dependence between $p$ and the range of $k$ for which the result holds. As we know from the fact that the $k$-partite structure is the extremal structure for small $c$, such a dependence is necessary in the forward direction. 
	However, here, all we are saying is that almost all monochromatic books in a quasirandom coloring are of essentially the correct size, that is, asymptotic to what they would be in a random coloring.
\end{rem}

\begin{proof}
	We will use the well-known result of Chung, Graham, and Wilson \cite{ChGrWi}, that a quasirandom coloring contains roughly the correct count of any fixed monochromatic subgraph. Specifically, for every $\delta>0$, there is some $\theta>0$, such that, in any $(p,\theta)$-quasirandom coloring of $E(K_N)$, 
	\begin{align*}
		B(K_k)&\coloneqq\#(\text{blue }K_k)=p^{\binom k2} \binom Nk \pm \delta N^k,\\
		B(K_{k+1})&\coloneqq\#(\text{blue }K_{k+1})=p^{\binom{k+1}2} \binom N{k+1} \pm \delta N^{k+1},\\
		B(K_{k+2}-e)&\coloneqq\#(\text{blue }K_{k+2}-e)=p^{\binom{k+2}2-1}\binom N{k+2} \binom{k+2}2 \pm \delta N^{k+2},
	\end{align*}
	where $K_{k+2}-e$ is the graph formed by deleting one edge from $K_{k+2}$; note that for this count we have an extra factor of $\binom {k+2}2$ to account for the fact that this graph is not vertex-transitive. On the other hand, we can observe that every blue copy of $K_{k+2}-e$ corresponds to two distinct extensions of a single blue $K_k$ to a blue $K_{k+1}$. Therefore,
	\[
		B(K_{k+2}-e)=\sum_{Q} \binom{\#(\text{blue extensions of }Q)}2,
	\]
	where the sum is over all blue $K_k$. Let $\ext_B(Q)$ denote the number of blue extensions of $Q$. Then we can also observe that $\sum_Q \ext_B(Q)$ counts the total number of ways of extending a blue $K_k$ into a blue $K_{k+1}$, which is precisely $(k+1)B(K_{k+1})$, since each blue $K_{k+1}$ contributes exactly $k+1$ terms to this sum.

	Now, we consider the quantity 
	\[
		E=\sum_{Q\text{ a blue }K_k} (\ext_B(Q)-p^{k}N)^2.
	\]
	On the one hand, we have that if $\delta \geq 1/N$, then
	\begin{align*}
		&E=\sum_Q \ext_B(Q)^2-2p^kN \sum_Q \ext_B(Q)+\sum_Q p^{2k} N^2\\
		&=\left( 2 \sum_Q \binom{\ext_B(Q)}2+\sum_Q \ext_B(Q) \right) -2p^k N(k+1)B(K_{k+1})+p^{2k} N^2 B(K_k)\\
		&=2 B(K_{k+2}-e)+(1-2p^k N)(k+1)B(K_{k+1})+p^{2k} N^2 B(K_k)\\
		&\leq 2p^{\binom{k+2}2-1} \binom N{k+2}\binom{k+2}2-2p^k N(k+1)p^{\binom{k+1}2}\binom N{k+1}+p^{2k} N^2 p^{\binom k2} \binom Nk +5k \delta N^{k+2}\\
		&=p^{\frac{k^2+3k}2} \binom Nk (-N+k^2+k) + 5k \delta N^{k+2}\\
		&<5k \delta N^{k+2}.
	\end{align*}
	On the other hand, suppose there were at least $\gamma N^k/2$ blue $K_k$ with at least $(p^{k}+\gamma)N$ or at most $(p^k-\gamma)N$ blue extensions. Then, by only keeping these cliques in the sum defining $E$, we would have that
	\begin{align*}
		E&=\sum_Q (\ext_B(Q)-p^{k}N)^2 \geq \frac{\gamma N^k}2 (\gamma N)^2=\frac{\gamma^3}2 N^{k+2}.
	\end{align*}
	Therefore, if we pick $\delta< \gamma^3/10k$, we get a contradiction. The same argument with $p$ replaced by $1-p$ and blue replaced by red shows that there are also fewer than $\gamma N^k/2$ red $K_k$ with at least $((1-p)^k+\gamma)N$ or at most $((1-p)^k-\gamma)N$ red extensions. This proves the theorem, since the total number of exceptional cliques is at most $\gamma N^k$.
\end{proof}

\section{Concluding remarks}
Putting together the main results of this paper, we obtain the following picture. For every $k \geq 2$, there exist two numbers $c_0(k),c_1(k) \in (0,1]$ such that if $0 <c\leq c_0$, then $r(B_{cn}\up k,B_n \up k) =k(n+k-1)+1$, while if $1\geq c\geq c_1$, then $r(B_{cn} \up k, B_n \up k) = (c^{1/k}+1)^k n+o_k(n)$. 
Moreover, in both these  regimes, there are stability results: there exist $c_0'(k)\leq c_0(k)$ and $c_1'(k)\geq c_1(k)$ such that for $0 < c \leq c_0'$, all the near-extremal colorings are close to $k$-partite,\footnote{For concreteness, we can fix $c'_0(k)$ as coming from an application of Theorem~\ref{thm:goodness-stability} with $\theta = 1/k^3$.}
while for all $1 \geq c \geq c_1'$, all near-extremal colorings are quasirandom. Of course, the most natural question remaining is to understand what happens in the interval $(c_0',c_1')$, where our results say nothing. Note that this gap is real, since below $c_0'$ all extremal colorings must be $k$-partite, whereas above $c_1'$ all extremal colorings must be quasirandom. On the other hand, it is possible that there is no gap between $c_0$ and $c_1$, since it is conceivable that at the point where the random and $k$-partite constructions yield comparable lower bounds on $r(B_{cn}\up k, B_n\up k)$, both are tight.

This question about the gap really comprises at least two separate questions: what happens for fixed $k$ and what happens as $k \to \infty$? 
To address the second question first, our results give some indication. Indeed, we have shown that both $c_0(k)$ and $c_1(k)$ tend to $0$ as $k \to \infty$ and thus the gap interval shrinks as $k \to \infty$. More precisely, we have that 
$$c_0(k) \leq \left((1+o(1))\frac{\log k}k\right)^k \leq c_1(k) \leq \left((1+o(1))\frac{\log k}k\right)^k.$$ 
Moreover, the results of \cite{FoHeWi} show that $1/c_0$ is at most single-exponential in a power of $k$. 
On the other hand, because we used the regularity lemma, our upper bound for $1/c_0'$ is only of tower-type. However, it seems likely that the methods of \cite{FoHeWi} could also be adapted to improve this.

The other question is what happens for fixed $k$. Here, our understanding is much more limited, even for the simplest case $k=2$. In this case, Nikiforov and Rousseau \cite{NiRo} proved that $c_0(2)=1/6$, in the sense that, for all $c<1/6$ and all $n$ sufficiently large, $r(B_{cn}\up 2, B_n \up 2) = 2n+3$, whereas, for any $c>1/6$ and all $n$ sufficiently large, there is a construction showing that $r(B_{cn} \up 2, B_n \up 2)>2n+3$. Curiously, our results do not say anything non-trivial about $c_1(2)$, other than the fact that the random bound is correct for $c=1$; in other words, we cannot prove that $c_1(2) < 1$ and in fact believe this to not be the case.

\begin{conj}
    For every $c<1$, the random bound for $r(B_{cn}\up 2,B_n \up 2)$ is not tight. In other words, there exists some $\beta=\beta(c)>0$ such that $r(B_{cn}\up 2,B_n \up 2) \geq ((\sqrt c+1)^2 + \beta)n$ for all $n$ sufficiently large.
\end{conj}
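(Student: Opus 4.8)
This final statement is an open conjecture, so what follows is a plan of attack rather than a complete argument.

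\textbf{Reformulation and an easy range.} A bound $r(B_{cn}\up2,B_n\up2)>\lambda n$ is precisely a red/blue colouring of $K_{\flo{\lambda n}}$ in which every blue edge lies in fewer than $n$ blue triangles and every red edge lies in fewer than $cn$ red triangles. By sampling a large $W$-random graph (whose monochromatic codegrees concentrate), it suffices to exhibit, for some $\lambda>(\sqrt c+1)^2$, a suitably regular symmetric $W\colon[0,1]^2\to\{0,1\}$ — the indicator of the blue graph — such that for a.e.\ $(x,y)$ one has $\lambda\int_0^1 W(x,z)W(y,z)\dd z\le 1$ whenever $W(x,y)=1$ and $\lambda\int_0^1(1-W(x,z))(1-W(y,z))\dd z\le c$ whenever $W(x,y)=0$; the $p$-random colouring realizes the constant profile and gives exactly $\lambda=(\sqrt c+1)^2$. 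Note first that for $c<3-2\sqrt2$ the conjecture is immediate, since then $(\sqrt c+1)^2<2$ and the $k$-partite construction of \eqref{eq:goodness-bound} already yields $r(B_{cn}\up2,B_n\up2)\ge 2n+3$. So the content is the regime $3-2\sqrt2\le c<1$, where one must genuinely beat the random bound.

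\textbf{The easy constructions provably fail.} The first thing to check is whether a bounded-complexity (step-function) colouring can do better: split $V$ into blocks and choose the blue densities within and between them. A short Lagrange-multiplier computation for two equal blocks, with densities $p_1,p_2$ within the blocks and $p_{12}$ between, shows that after balancing the tightest blue and red codegree constraints one is forced back to $\lambda\le(\sqrt c+1)^2$, with equality exactly at the random colouring — the binding obstruction being $p_1^2+p_2^2\ge(p_1+p_2)^2/2$. Thus any improvement must come from a colouring of unbounded complexity. This is consistent with the interpolation heuristic suggested by the rest of the paper: the extremal colouring should morph from $k$-partite (dominant for small $c$) to quasirandom (extremal at $c=1$), so that near the crossover $c=3-2\sqrt2$ the truth should be "$k$-partite perturbed by a controlled sparse random graph," near $c=1$ it should be "quasirandom perturbed by a vanishing amount of bipartite structure," and for intermediate $c$ a genuine blend — in each case beating the random bound by an amount $\beta(c)$ that tends to $0$ as $c\to1$.

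\textbf{Two concrete avenues.} (i) \emph{Perturbation constructions.} Near the crossover, start from the balanced $2$-partite red construction on $2n$ vertices: there the red edges lie in \emph{no} red triangles, so the entire red-triangle budget $cn$ is wasted. Carefully adding a sparse random red graph \emph{inside} the two parts creates red triangles (staying below $cn$) while allowing the total number of vertices to grow past $2n$, and one would optimize the within-part red density against the number of extra vertices. Near $c=1$ the analogous move starts from the $p=\tfrac12$ random colouring on $4n$ vertices and tilts the densities slightly in a structured (near-bipartite) way to shave the worst codegrees; the delicate point is that one only has an $o(n)$ slack to exploit. (ii) \emph{A book-saturation process,} in analogy with the triangle-free process behind the lower bound for $r(K_3,K_n)$: process the pairs of $K_N$ in random order, colouring each pair blue unless this would put a blue edge in $\ge n$ blue triangles and red unless this would put a red edge in $\ge cn$ red triangles. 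Running the differential-equations method, one tracks the joint distribution of blue- and red-triangle counts on the unprocessed pairs and aims to show the process survives past $N=(\sqrt c+1)^2 n$ by a constant factor; the asymmetry $c<1$ is what should create room, since the two budgets $n$ and $cn$ are consumed at different rates and the effective local density adjusts dynamically rather than being pinned at a single $p$. For $c=1$ the symmetric process should saturate at $N=(4+o(1))n$, matching the known answer and serving as a sanity check.

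\textbf{Main obstacle.} The crux — and the reason this remains a conjecture — is establishing that \emph{any} construction genuinely beats the first-order random bound: the step-function colourings provably do not, the perturbation constructions require a careful second-order optimization that has not been carried out, and controlling a book-saturation process is harder than the triangle-free process because here the two colours constrain each other, so one must understand the joint law of blue- and red-triangle counts rather than a single such count. We do not currently know an explicit algebraic or pseudorandom $\{0,1\}$ graphon whose local blue/red codegrees beat the constant profile, though the numerics strongly suggest one exists for every $c<1$.
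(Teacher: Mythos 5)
This is an open conjecture in the paper; the authors give no proof or even a concrete extremal construction, and you are right not to present your discussion as one. Your observation that the range $c < 3 - 2\sqrt{2}$ is handled trivially, since then $(\sqrt{c}+1)^2 < 2$ while the $2$-partite construction already gives $r(B_{cn}^{(2)}, B_n^{(2)}) \geq 2n+3$, is correct and is exactly the kind of reduction the authors would consider obvious; the genuine content of the conjecture is the regime $c \geq 3-2\sqrt{2}$ where no deterministic construction is currently known to beat the random one.

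One cautionary note on the substance of your plan: the assertion that a two-block step-function graphon ``provably fails,'' supported by a sketched Lagrange-multiplier argument, should not be stated with that confidence without actually carrying it out. The relevant optimization has several binding constraints (the blue codegree bound on blue edges within each block and between blocks, and the red codegree bound on red edges in the same three positions), and which constraint is tight depends on the choice of $(p_1,p_2,p_{12})$; the inequality $p_1^2+p_2^2 \geq (p_1+p_2)^2/2$ alone does not settle it, since the $c$-asymmetry means the blue and red budgets need not be balanced the same way in both blocks. This is worth doing carefully because, if some asymmetric two-block (or $m$-block) colouring \emph{did} beat the random bound, it would immediately prove the conjecture for the corresponding range of $c$. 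Beyond that, both of your avenues (perturbing the bipartite or quasirandom colourings, and a two-colour triangle-saturation process) are sensible and consistent with the heuristic the paper offers in its concluding remarks — namely that extremal colourings should interpolate between $k$-partite and quasirandom as $c$ increases — but neither has been executed, which is precisely why the statement remains a conjecture.
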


Of course, this conjecture is really only the tip of an iceberg, with the general open question being to understand $r(B_{cn}\up 2,B_n \up 2)$ for $c\in (1/6,1)$ and $n\to \infty$. There are many conjectures one could make about the behavior of this quantity as a function of $c$; for instance, perhaps there are a number of thresholds in the interval $(1/6,1)$ at which new extremal structures emerge, each dictating the value of $r(B_{cn}\up 2, B_n \up 2)$ until the next threshold. Because we know that the random bound is correct for $c=1$ and that quasirandom colorings are the only extremal ones, such a sequence of extremal examples would need to converge, in some appropriate sense, to the quasirandom coloring as $c \to 1$. However, at the moment we are not even able to conjecture a single such extremal structure or threshold.

\paragraph{Acknowledgments.} We are grateful to the anonymous referee for helpful comments which improved the presentation of this paper.


\begin{thebibliography}{10}
\providecommand{\url}[1]{\texttt{#1}}
\providecommand{\urlprefix}{URL }
\providecommand{\eprint}[2][]{\url{#2}}

\bibitem{AjKoSz}
M.~Ajtai, J.~Koml\'{o}s, and E.~Szemer\'{e}di, A note on {R}amsey numbers,
  \emph{J. Combin. Theory Ser. A} \textbf{29} (1980), 354--360.

\bibitem{AnErSo}
B.~Andr\'{a}sfai, P.~Erd\H{o}s, and V.~T. S\'{o}s, On the connection between
  chromatic number, maximal clique and minimal degree of a graph,
  \emph{Discrete Math.} \textbf{8} (1974), 205--218.

\bibitem{BoKe}
T.~Bohman and P.~Keevash, Dynamic concentration of the triangle-free process,
  \emph{Random Structures Algorithms} \textbf{58} (2021), 221--293.

\bibitem{Brandt}
S.~Brandt, On the structure of graphs with bounded clique number,
  \emph{Combinatorica} \textbf{23} (2003), 693--696.

\bibitem{BuEr}
S.~A. Burr and P.~Erd\H{o}s, Generalizations of a {R}amsey-theoretic result of
  {C}hv\'{a}tal, \emph{J. Graph Theory} \textbf{7} (1983), 39--51.

\bibitem{ChGrWi}
F.~R.~K. Chung, R.~L. Graham, and R.~M. Wilson, Quasi-random graphs,
  \emph{Combinatorica} \textbf{9} (1989), 345--362.

\bibitem{MR314696}
V.~Chv\'{a}tal and F.~Harary, Generalized {R}amsey theory for graphs. {III}.
  {S}mall off-diagonal numbers, \emph{Pacific J. Math.} \textbf{41} (1972),
  335--345.

\bibitem{Conlon2009}
D.~Conlon, A new upper bound for diagonal {R}amsey numbers, \emph{Ann. of
  Math.} \textbf{170} (2009), 941--960.

\bibitem{Conlon}
D.~Conlon, The {Ramsey} number of books, \emph{{A}dv. {C}ombin.}  (2019), Paper
  No. 3, 12 pp.

\bibitem{CoFoWi}
D.~Conlon, J.~Fox, and Y.~Wigderson, {R}amsey numbers of books and
  quasirandomness, \emph{Combinatorica} \textbf{42} (2022), 309--363.

\bibitem{Erdos47}
P.~Erd\H{o}s, Some remarks on the theory of graphs, \emph{Bull. Amer. Math.
  Soc.} \textbf{53} (1947), 292--294.

\bibitem{ErFaRoSc}
P.~Erd\H{o}s, R.~J. Faudree, C.~C. Rousseau, and R.~H. Schelp, The size
  {R}amsey number, \emph{Period. Math. Hungar.} \textbf{9} (1978), 145--161.

\bibitem{ErSz}
P.~Erd\H{o}s and G.~Szekeres, A combinatorial problem in geometry,
  \emph{Compos. Math.} \textbf{2} (1935), 463--470.

\bibitem{FiGrMo}
G.~Fiz~Pontiveros, S.~Griffiths, and R.~Morris, The triangle-free process and
  the {R}amsey number {$R(3,k)$}, \emph{Mem. Amer. Math. Soc.} \textbf{263}
  (2020), v+125 pp.

\bibitem{FoHeWi}
J.~Fox, X.~He, and Y.~Wigderson, Ramsey goodness of books revisited. Preprint
  available at \url{arXiv:2109.09205}.

\bibitem{KoSoTu}
T.~K\H{o}v\'ari, V.~T. S\'{o}s, and P.~Tur\'{a}n, On a problem of {K}.
  {Z}arankiewicz, \emph{Colloq. Math.} \textbf{3} (1954), 50--57.

\bibitem{Kim}
J.~H. Kim, The {R}amsey number {$R(3,t)$} has order of magnitude {$t^2/\log
  t$}, \emph{Random Structures Algorithms} \textbf{7} (1995), 173--207.

\bibitem{MuVe}
D.~Mubayi and J.~Verstra\"ete, A note on pseudorandom {R}amsey graphs. \emph{J.
  Eur. Math. Soc.}, to appear. Preprint available at \url{arXiv:1909.01461}.

\bibitem{NiRo}
V.~Nikiforov and C.~Rousseau, Book {R}amsey numbers {I}, \emph{Random
  Structures Algorithms} \textbf{27} (2005), 379--400.

\bibitem{NiRo09}
V.~Nikiforov and C.~C. Rousseau, Ramsey goodness and beyond,
  \emph{Combinatorica} \textbf{29} (2009), 227--262.

\bibitem{Ramsey}
F.~P. Ramsey, On a problem of formal logic, \emph{Proc. London Math. Soc.}
  \textbf{30} (1929), 264--286.

\bibitem{Sah2021}
A.~Sah, Diagonal {R}amsey via effective quasirandomness. \emph{Duke Math. J.},
  to appear. Preprint available at \url{arXiv:2005.09251}.

\bibitem{Shearer}
J.~B. Shearer, A note on the independence number of triangle-free graphs,
  \emph{Discrete Math.} \textbf{46} (1983), 83--87.

\bibitem{Steele}
J.~M. Steele, \emph{The {C}auchy-{S}chwarz master class: An introduction to the
  art of mathematical inequalities}, MAA Problem Books Series, Mathematical
  Association of America, Washington, DC, Cambridge University Press,
  Cambridge, 2004.

\bibitem{Thomason82}
A.~Thomason, On finite {R}amsey numbers, \emph{European J. Combin.} \textbf{3}
  (1982), 263--273.

\bibitem{Zhao}
Y.~Zhao, Graph theory and additive combinatorics, 2022.
  \url{https://yufeizhao.com/gtacbook/gtacbook.pdf}.

\end{thebibliography}

\end{document}